\newtheorem{thm}{Theorem}[section]
\newtheorem{lem}[thm]{Lemma}
\newtheorem{prop}[thm]{Proposition}
\newtheorem{cor}[thm]{Corollary}
\newtheorem{df}[thm]{Definition}
\newtheorem{qu}[thm]{Question}
\newtheorem{rmk}[thm]{Remark}
\newtheorem{nota}[thm]{Notation}
\newtheorem{ex}[thm]{Example}
\newtheorem{caveat}[thm]{Caveat}
\newcommand{\cyccirc}[2]{\leftsub{#1}{\bar{\circ}}_{#2}}
\newcommand{\leftsub}[2]{{\vphantom{#2}}_{#1}{#2}}
\def\nn{\nonumber}
\def\eps{\epsilon}
\def\G{\Gamma}
\def\arity{ar}
\def\gVect{g\mathcal{V}ect}
\def\dgVect{dg\mathcal{V}ect}
\def\t{\tau}
\def\tensor{\otimes}
\def\la{\langle}
\def\ra{\rangle}
\def\CalC{{\mathcal C}}
\def\CO{{\mathcal O}}
\def\F{\mathcal F}
\def\R{{\mathbb R}}
\def\Z{{\mathbb Z}}
\def\SS{{\mathbb S}}
\def\N{{\mathbb N}}
\def\G{\Gamma}
\def\V{{\mathcal V}}
\def\Iso{{\mathcal I}so}
\def\RT{{\mathcal RT}}
\def\T{{\mathcal T}}
\def\G{{\mathcal Graph}}
\def\del{\partial}
\def\colim{\mathrm colim}
\newcommand{\op}{\mathcal}
\def\O{{\mathcal O}}
\def\P{{\mathcal P}}
\def\Sn{{\mathbb S}_n}
\def\SS{{\mathbb S}}
\def\Snp{\SS_{n+}}
\def\Endo{{\mathcal E}nd}
\def\cEndo{\mathcal E}
\def\Op{\O^{\oplus}}
\def\OSp{\O^{\oplus}_{\SS}}
\def\OSpp{\O^{\oplus}_{\SS_+}}
\def\form{\la \; ,\; \ra}
\def\Gbracket{\{\;\bullet\;\}}
\def\Liebracket{[\;\circ\;]}
\def\cLiebracket{[\;\odot\;]}
\def\cGbracket{\{\;\odot\;\}}
\def\odo{\otimes \cdots \otimes}
\def\K{\mathfrak K}
\def\crl{*}
\def\ast{}
\def\Fin{Fin}
\def\Mgn{\overline{M}_{g,n}}
\def\Mksv{\Mgn^{KSV}}
\newcommand\ccirc[2]{\, \leftsub{#1}{\circ}_{#2}}
\def\scirct{\ccirc{s}{t}}
\newcommand\cbullet[2]{\, \leftsub{#1}{\bullet}_{#2}}
\newcommand\Sccirc[2]{\, \leftsub{#1}{\circ}_{#2}^{S^1}}
\def\Arc{\mathcal {A}rc}
\newcommand{\ds}{\displaystyle}
\def\T{{\mathbb T}}
\def\D{\mathfrak D}
\def\unit{1\!\!1}
\def\s{\mathfrak s}
\def\sout{s_{\rm out}}
\def\sin{s_{\rm in}}
\newcommand{\oprod}{\op{O}^{_{\prod}}_{\SS}}
\newcommand{\xycirc}[2]{\leftsub{#1}{\circ}_{#2}}
\newcommand{\fr}{\mathfrak}
\begin{document}

\title{The odd origin of Gerstenhaber brackets, Batalin-Vilkovisky operators, and  master equations}

\author
{Ralph M.\ Kaufmann}

\address{Purdue University Department of Mathematics,
 West Lafayette., IN 47907}
 \email{rkaufman@math.purdue.edu}

\author{ Benjamin C.~Ward}
 
 \address{ Simons Center for Geometry and Physics, Stony Brook, NY 11794}
 \email{bward@scgp.stonybrook.edu}
\author{J.~Javier Z\'u\~niga }

 \address{Departamento Acad\'{e}mico de Econom\'{\i}a
Universidad del Pac\'{\i}fico, Lima 21,
Per\'u}
\email{zuniga\_jj@up.edu.pe}

\begin{abstract}
Using five basic principles we treat Gerstenhaber/Lie brackets, BV operators and Master equations appearing in mathematical and physical contexts in a unified way.
The different contexts for this are given by
the different types of (Feynman) graphs that underlie the particular situation.

Two of the maxims we  bring forth are (1) that  extending
to the non-connected graphs gives a commutative multiplication forming
a part of the BV structure and (2) that there is a universal odd twist
that unifies and explains seemingly ad hoc choices of signs, and is responsible for
the BV operator being a differential.

Our treatment results in uniform, general theorems. These allow us to prove new results and
recover and connect
many constructions that have appeared independently throughout
the literature. The more general point of view also allows us to disentangle the necessary from the circumstantial.
\end{abstract}
\maketitle


\section*{Introduction}

In recent years, there have been many algebraic constructions which in their
background have some physical, (string) field theoretical origin. Perhaps the most prominent are Lie brackets,
Gerstenhaber brackets and master equations. The Lie algebras of Kontsevich \cite{Kthree,CV}
as well as Deligne's conjecture  \cite{KS,McSm,Vor,del,BF,T}, its cyclic generalization \cite{cyclic}
and its $A_{\infty}$ version which was
studied in \cite{TZ,KSchw,manfest,Ward}, and notably
string topology \cite{CS} are of this type, especially when considered in the algebraic framework
\cite{del,cyclic, TZ}.  Among master equations the relevant constructions go back to Sen and Zwiebach \cite{Zwie,KSV} and newer
ones include \cite{KontSchw,Schwarz, HVZ,S1,S2,S3,Ba,MMS,Merkulov}.
There is a plethora of further incidences which would fill volumes. One particularly important aspect for us is that
solutions give rise to a viable action as explained
in \cite{Zwie,KontSchw,Costello,Merkulov}.
Without being too specific in this introduction there are several incarnations of the master equation going by various names:
\begin{equation}
\{S\bullet S\}=0, \quad dS +\frac {1}{2}\{S\bullet S\}=0, \quad dS+\Delta(S)+\frac{1}{2}\{S\bullet S\}=0.
\end{equation}
The first is a type of classical master equation (ME), with the differential $d$ the equation is sometimes called the Maurer-Cartan (MC) equation and
with $\Delta$ is called the quantum master equation (QME). Of course, one has to ---and we will--- specify where $S$ lies and what the definition of $\{\, \cdot \, \}$,
$\Delta$ is.
The physical setup using correlation functions for graphs,  is mathematically captured by an operadic type context.  The operadic context is fixed by the type of (Feynman) graphs one allows, e.g. with or without loops; see e.g.\ Table \ref{optable}.
We show how the algebraic operations as well as
master equations both appear naturally under the same principles in all situations.

The paper provides  both  an introduction to the theory and the classic context
as well as gives new, state-of-the-art results in an accessible
fashion and is intended for a general readership.
To this end, we will go through the different cases  by starting with the most familiar, operads,
and make our way to less widely known subjects such as modular operads
and $\K$ twists through a progressive development.
This yields a systematic study of the above mentioned algebraic operations, i.e.\ brackets
and BV operators $\Delta$,
their occurrence in master equations, and the origin of these equations. The contexts we treat are
operads, cyclic operads, dioperads, (wheeled) PROP(erad)s, (wheeled)
PROPs and modular operads as well as their non--connected (nc) versions,
nc--(modular/cyclic/di--) operads,  which we newly define in this paper. All of them are
defined in the paper to make it self--contained.
Along the way, we provide new results and constructions. These include the bracket in the cyclic case
and several non--connected versions of the above structures, which are necessary to construct BV operators.

Besides these new results another key point of the paper is to establish
that there is a
universal framework of odd (more technically $\K$--twisted) structures, regardless
of the details of the specific example which naturally explains all the constructions, including degrees and signs,
in one fell swoop ---instead of handling each of the case by case with possibly different but equivalent conventions.
This allows us to disentangle
different structures that often appear in conjunction in specific examples. For instance, brackets
on homomorphism spaces and brackets involving symplectic constructions,
which {\em inherit} their structures from being {\em examples} of $\K$--twisted objects.
A second main point is that we show that for each framework there is a non--connected analog, which
has an additional ``horizontal'' multiplication. Physically this corresponds to going from connected to non--connected Feynman graphs.
The framework itself can even be generalized further in terms of categorical language. This is done in \cite{feynman}.
Here we give the concrete constructions of that general theory in the context relevant for the physical and geometric examples listed above. Avoiding the categorical complications to make our statements,
let us summarize the results from \cite{feynman} as they apply to the current situation into the following mantra:
\begin{enumerate}
\item Odd non--self--gluings give rise to odd Lie brackets.
\item Odd self--gluings give rise to differentials.
\item The horizontal multiplication   turns the odd brackets into odd
Poisson or Gerstenhaber brackets and makes the differentials BV operators (on the nose and not just
up to homotopy).
\item Algebraically, the master equation classifies dg--algebras over the relevant dual or Feynman transform.
\item Topologically, the master equation drives the compactification.
\end{enumerate}
Here we prove that in all the examples above this mantra turns into algebraic and topological theorems after we provide the correct structures, which are constructed in this paper.

For the transforms we note that we only consider transforms in which the elementary operations are one--edge gluings on the underlying graphs and do not include the horizontal compositions into the differential. This is opposed to the discussion in \cite{Val,Merkulov}.
 Physically the reason for this is that the underlying propagators of single particles  are fundamental and one
 wants to preserve the fact that the exponential of
the action gives the sum over all non--connected graphs and hence
the equivalence
\begin{equation}
dS+\Delta(S)+\frac{1}{2}\{S\bullet S\}=0 \Leftrightarrow  (d+\Delta) e^S=0
\end{equation}
 This guarantees that we do not
change the fundamental exponential/log
 relation between the generating functions
for the connected
graphs and the non--connected ones.

We define each notion from scratch and along the way   treat the complicating issues of  signs,
 the marked difference between the directed case, e.g.\ operads
and the non-directed case, e.g.\ cyclic operads,
 and  the difference between the symmetric and non--symmetric versions.
We also show that in particular cases the odd structures can be shifted back to even ones,
if there are spurious isomorphisms of twists.

Finally, we firmly anchor our point of view in  geometry as suggested by  open/closed string field theory.
Here we consider as a main source
of examples operadic--type structure with compatible $S^1$ actions. This leads to one--parameter
family gluings. On the chain level, these are naturally odd, since they have degree $1$.
This explains all the signs, as we have postulated. We  also examine different geometrical
origins such as framings in the so--called open case. We then argue that in these situations
mantra (5) appears as the natural point of view.
Finally, we summarize  our results, put them into context of the existing literature and give an outlook.

The paper is organized as follows:
In \S1 we start by recalling the classical results for operads and Gerstenhaber brackets, which we generalize and introduce
our general view of signs and degrees. In \S2 we provide a new generalization of the bracket to the cyclic case, which we discuss in detail. As examples we recover results of \cite{Kthree,CV,MenLie}. \S3 deals with the notions that have input/output distinction of the graphs and we prove all of the points of the mantra for them using the transforms indicated above.
\S4 gives the background theory for the modular case, that is self--gluings and no input/output distinction, in detail.
This is the technically most challenging, but it is here that we can introduce the general $\K$-twist as the definitive version of odd. We show that all conventions we have made up to that point for signs agree with this general $\K$--twist.
\S5 then gives the BV operator for this case. \S6 contains the new constructions of non--connected operads and their cyclic and modular versions in order to fulfill mantra (3). Here we also newly introduce the operator $B_+$ of renormalization  into the operadic picture. \S7 contains the the definition of the transforms for mantra (4) and its incarnation into theorems.  \S8 provides geometric background for odd operations. Here we newly treat the $\Arc$ operad and hence string topology.
\S9 gives a summary, discussion and outlook. We include an Appendix on graphs and algebras for the readers convenience.

\section{Operads, Gerstenhaber's bracket and  natural ``oddness''}\label{opsec}

\subsection{Basic Background}

For convenience, we usually work in the in the category
$\gVect$ of  graded vector spaces
over a fixed ground field $k$ of characteristic $0$.

For most constructions, this is not necessary and one can generalize to any additive category (or better a category enriched over graded Abelian groups) which is cocomplete.
Or even less, where the particular colimits we use exist.

Sometimes we however use the isomorphism between $\Sn$ invariants and $\Sn$ co--invariants for all $n$. In this
case, we need characteristic $0$. Usually this step is again convenient but not strictly necessary and it can
be omitted at the price of less succinct statements.

\subsubsection{Canonical Example} For a finite dimensional vector space $V$, define a vector space $\Endo(V)(n):=Hom(V^{\otimes n},V)$.  These spaces have an obvious $\Sn$ action by permuting the variables (factors of $V$) of the multilinear functions.
There are composition maps $\circ_i$: $\Endo(V)(n)\otimes\Endo(V)(m)\to \Endo(V)(n+m-1); f\otimes g \mapsto f\circ_i g$
 which are given by substituting  $g$ in the $i$--th place of the function $f$. There is a unit
 for these compositions which is the identity function $id:V\to V$.
These compositions are associative and equivariant under the action
of the relevant symmetric groups in a natural universal
 manner. That is for every pair of permutations $(\sigma,\sigma')\in \Sn\times
\SS_m$ there is a unique permutation $\sigma\circ_i\sigma'\in \SS_{n+m-1}$
s.t.\ $\sigma f\circ_{i} \sigma' g= (\sigma \circ_i\sigma') f\circ_{\sigma^{-1}(i)}g.$

\subsubsection{Operads} Making the example above abstract:  An operad is given by a collection $\{\O(n)\}$ in $\gVect$ or more generally in a symmetric monoidal category $\CalC$ together with:

\begin{enumerate}
\item operadic compositions or gluing maps,
$$
\circ_i:\O(n)\otimes \O(m)\to \O(m+n-1): 1\leq i\leq n
$$
\item an $\Sn$ action for each $\O(n)$. 
\item and a unit $id\in \O(1)$.
\end{enumerate}
Such that the gluing maps satisfy the  associativity relations
\begin{multline}\label{opassoceq}
(\op{O}(n) \circ_i \op{O}(m)) \circ_j \op{O}(l) =\\
\left\{ \begin{array}{rl} \gamma_{\op{O}(m),\op{O}(l)}(\op{O}(n) \circ_j \op{O}(l)) \circ_{i+l-1} \op{O}(m) & \mbox{if   } 1\leq j<i \\ \op{O}(n) \circ_i (\op{O}(m) \circ_{j-i+1} \op{O}(l)) & \mbox{if   }  i\leq j \leq i+m-1 \\ \gamma_{\op{O}(m),\op{O}(l)}(\op{O}(n) \circ_{j-m+1} \op{O}(l)) \circ_i \op{O}(m) & \text{if } i+m \leq j \end{array} \right.
\end{multline}
where $\gamma$ is the braiding isomorphism in the symmetric monoidal category.  In the category $\gVect$, $\gamma(a\tensor b)=(-1)^{deg(a)deg(b)}b\tensor a$, where $deg$ is the degree.

The unit satisfies
$$\forall a\in \O(n), 1\leq i\leq n:\quad id\circ_1a=a; \quad a\circ_iid=a $$
and the gluing maps are required to be $\Sn$ equivariant.  We omit the rather lengthy formal definition of the equivariance, as it can easily be derived from the canonical example above.  Details can be found in \cite{MSS}.

A collection of $\{\O(n)\}$ of $\Sn$ modules $\O(n)$ is called an $\SS$--module.

\subsubsection{Rooted trees} The associativity means that any planar rooted
tree $\tau$ with leaves labeled by $1,\dots, n$
determines a unique composition by using it as a flow chart. Here
associativity says that the order of the compositions is irrelevant.
If we add the $\SS$ equivariance, then any rooted tree gives an operation.
More in \S\ref{triplepar}.

\subsubsection{Algebras over operads}
The operad $\Endo(V)$ plays a special role.
An algebra $V$ over an operad is an operadic morphism from $\O$ to $\Endo(V)$ (of degree $0$). Here operadic morphism is the straightforward notion
obtained by requiring that all the compositions and $\Sn$ actions are respected.

The operad $\Endo(V)$
can also be generalized to any closed symmetric monoidal category $\CalC$ where
now $V$ is an object.

\subsubsection{Weaker structures}
Dropping the unit from the data and axioms yield the notion of a non-unital pseudo-operad.   Dropping the $\Sn$ action and the $\Sn$ equivariance, we arrive at the definition of a {\em non-$\Sigma$} operad.

The distinction between pseudo or not is irrelevant in the unital case as these notions are equivalent; see \cite{MSS}.

\subsection{Lie bracket}

\begin{nota}  Given an operad $\op{O}=\{\op{O}(n)\}$ we set $\Op:=\bigoplus_{n\in \N}\O(n)$.  If $a\in \O(n)$ with degree $deg(a)$,
we set $\arity(a)=n$ and $|a|:=deg(a)+\arity(a)$.  These two gradings on $\Op$ are refered to respectively as the internal degree and the total degree.  We will also consider the co-invariants $\O(n)_{\Sn}$ and set $\OSp:=\bigoplus_{n\in \N}\O(n)_{\Sn}$.
\end{nota}

\begin{thm}\cite{GV,KM}
Given an operad or non-$\Sigma$ operad $\op{O}=\{\O(n)\}$ in $\gVect$, set
\begin{equation*}a\circ b:=\sum_{i=1}^{\arity(a)} a\circ_i b
\end{equation*}
then  $\circ$ is a pre-Lie multiplication and hence
\begin{equation*}
[a \circ b]:=a\circ b - (-1)^{deg(a)deg(b)}b\circ a
\end{equation*}
defines a graded Lie bracket on $\Op$ with respect to the internal grading.  In the symmetric case this Lie bracket descends to a Lie bracket on $\OSp$.
\end{thm}

\subsection{Odd Lie bracket}
In Gerstenhaber's original work \cite{Gerst} the bracket is not Lie but odd Lie and certain signs are introduced in the summation.  We will show that these signs can be understood in terms of operadic suspensions and shifts.  In particular, doing an operadic suspension one almost gets the signs.  After one more shift, the signs are the ones of the Hochschild complex. What seems {\it prima vista} unfortunate, namely that a na\"ive shift of an operad ceases to be an operad, is actually completely natural, as according to the mantra the bracket should come from an odd gluing.
Let us formalize this.

\subsubsection{Shifts and odd Lie brackets}
Given a graded vector space $V=\bigoplus_i V^i$,
we set $\Sigma V:=V[-1]$ this means that $(\Sigma V)^i=V^{i-1}$ and call it the suspension
of $V$.  The inverse operation of suspension is called desuspension. We set $(\Sigma^{-1} V)^i=V^{i+1}$

If $|\, .\,|$ is the grading of $V$, we set $s(a):=|a|-1$
then $s(a)$ is the natural degree of $a$ thought of as an element in $\Sigma V$.

Recall (e.g. from the appendix) that an odd Lie bracket is a bilinear map (by our conventions of degree $-1$) which satisfies odd anti-symmetry and odd Jacobi identities.  Alternatively, a direct calculation yields the following useful
characterization.
\begin{lem}
\label{shiftlem}
$\{\; \bullet \;\}$ is an odd Lie bracket on $V$ if and only if it is a Lie bracket on $\Sigma^{-1} V$.
\end{lem}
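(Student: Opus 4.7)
The plan is to show that the odd Lie axioms on $V$ and the ordinary graded Lie axioms on $\Sigma V$ coincide term by term once the grading is relabeled, so that the lemma is essentially a change-of-variables tautology. The starting observation is that $V$ and $\Sigma V$ agree as ungraded vector spaces; any bilinear map $\{\;\bullet\;\}:V\otimes V\to V$ is therefore also a bilinear map $\Sigma V\otimes \Sigma V\to \Sigma V$, and by construction the natural degree of an element $a$ viewed in $\Sigma V$ equals $s(a)=|a|+1$. This is the only bookkeeping one needs.

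First I would translate antisymmetry. The ordinary graded antisymmetry of a Lie bracket on $\Sigma V$ reads $\{a\bullet b\} = -(-1)^{|a|_{\Sigma V}\,|b|_{\Sigma V}}\{b\bullet a\}$; since the $\Sigma V$-degree of $a$ equals $s(a)$, this is literally the odd antisymmetry condition on $V$. I would then treat Jacobi in exactly the same way: the graded Jacobi identity for a Lie bracket on $\Sigma V$, when written out using $|a|_{\Sigma V}=s(a)$, $|b|_{\Sigma V}=s(b)$, $|c|_{\Sigma V}=s(c)$, reproduces verbatim the odd Jacobi identity in the definition, sign by sign and term by term. Both translations are automatic biconditionals because they amount to matching definitions, and since the identification $V\leftrightarrow \Sigma V$ is a bijection of underlying sets, no information is lost in either direction.

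The only potential subtlety is ensuring that the identification of the bracket between $V$ and $\Sigma V$ is made transparently, so that no hidden d\'ecalage sign is being swept under the rug. Taking the literal equality of underlying linear maps, as above, is unambiguous and matches the Koszul sign discipline used throughout the paper; any other convention (such as composing with the d\'ecalage isomorphism $\Sigma(V\otimes V)\cong \Sigma V\otimes V$) produces the same final axioms after a routine sign bookkeeping. I therefore do not expect any genuine obstacle: the content of the lemma is precisely that ``odd'' encodes the signs produced by a single suspension, and so the proof amounts to a direct rewriting rather than a calculation.
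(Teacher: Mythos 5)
Your proposal is correct and is exactly the ``direct calculation'' the paper alludes to (the lemma is stated with no written proof): since $s(a)=|a|+1$ is by definition the degree of $a$ in $\Sigma V$, the odd antisymmetry and odd Jacobi identities on $V$ are literally the graded Lie axioms on $\Sigma V$ written out in the shifted grading. Your remark that no hidden d\'ecalage sign enters, because the bracket is identified as the same underlying linear map, is the only point needing care and you handle it correctly.
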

\begin{rmk}
Since we are dealing with signs only, the shift in degree can be made to be $+1$ or $-1$.
\end{rmk}

\subsubsection{Shifted compositions and  Gerstenhaber's bracket}
Following Gerstenhaber \cite{Gerst}, given $\O$ in $\op{V}ect$ we define new composition maps $\bullet_i$ as $a\bullet_i b := (-1)^{(i-1)s(b)} a\circ_i b$.  More generally if $\O$ is an operad in $\gVect$ we define
\begin{equation*}
a\bullet_i b := (-1)^{(i-1)(ar(b)-1)+(ar(a)-1)deg(b)} a\circ_i b
\end{equation*}
Set
\begin{equation*}
a \bullet b=\sum_{i=1}^{\arity(a)} a\bullet_i b
\end{equation*}

Analogously to the Lie situation, set
\begin{equation*}
\{a\bullet b\}:= a\bullet b - (-1)^{s(a)s(b)}b\bullet a
\end{equation*}

With this definition one readily verifies that:
\begin{prop}\label{Gbracketprop}\cite{Gerst,GV,KM,del} The bilinear operation $\{\;\bullet\;\}$ is an odd Lie bracket on $\Op$, with respect to the total grading, and it descends to co-invariants $\OSp$.
\end{prop}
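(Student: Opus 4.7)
By Lemma~\ref{shiftlem}, showing that $\{\;\bullet\;\}$ is an odd Lie bracket on $\Op$ (with grading $|\cdot|$) is equivalent to showing it is an ordinary Lie bracket on the suspension $\Sigma\Op$ (with grading $s(a)=|a|+1$). So the plan is to mimic the proof of the preceding theorem about $\{\;\circ\;\}$, but with the shifted degree $s$ playing the role of $|\cdot|$ and with $\bullet_i$ in place of $\circ_i$. Concretely, I would show that $\bullet$ is a pre--Lie operation on $\Sigma\Op$ with respect to $s$, conclude that the commutator $\{\;\bullet\;\}$ is a Lie bracket on $\Sigma\Op$, and then translate back via Lemma~\ref{shiftlem}.

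\textbf{Step 1: degree.} First verify that $\bullet_i:\O(n)\otimes\O(m)\to\O(n+m-1)$ has degree $-1$ for $|\cdot|$ and hence degree $0$ for $s$, since suspension converts an arity decrease by $1$ into a degree shift by $+1$ on each factor. Thus $\bullet$ acts on $\Sigma\Op$ in a degree--preserving way.

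\textbf{Step 2: pre--Lie identity.} The heart of the argument is to check the pre--Lie identity on $\Sigma\Op$:
\begin{equation*}
(a\bullet b)\bullet c - a\bullet(b\bullet c) \;=\; (-1)^{s(b)s(c)}\bigl[(a\bullet c)\bullet b - a\bullet(c\bullet b)\bigr].
\end{equation*}
Expanding the double sums using the definition $a\bullet_i b=(-1)^{(i-1)s(b)}a\circ_i b$ and invoking the three cases of operadic associativity ($j<i$; $i\le j\le i+m-1$; $j\ge i+m$), one sees that the middle case ($c$ inserted into the copy of $b$) is exactly the non--symmetric term $a\bullet(b\bullet c)$ after summation, while the two ``outside'' cases are symmetric in $(b,c)$ up to the predicted Koszul sign $(-1)^{s(b)s(c)}$. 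The sign bookkeeping is the routine but delicate part: the factor from $\bullet$ is $(-1)^{(i-1)s(b)+(j-1)s(c)}$ before applying associativity, and after associativity the index shifts in the $j<i$ and $j\ge i+m$ cases produce the compensating signs. The commutator from the symmetric group action in the graded symmetric monoidal category $\gVect$ contributes the remaining $(-1)^{s(b)s(c)}$ (after accounting for the arity--suspension identification). This is essentially Gerstenhaber's computation \cite{Gerst}, rewritten with the grading $s$. From the pre--Lie identity, odd anti--symmetry and odd Jacobi for $\{\;\bullet\;\}$ follow formally, exactly as in the proof of the preceding theorem.

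\textbf{Step 3: descent to coinvariants.} Setting $\{[a]\bullet[b]\}:=[\{a\bullet b\}]$, I would check well--definedness using the $\Sn$--equivariance of the operadic compositions $\circ_i$, as in the previous theorem. The new signs $(-1)^{(i-1)s(b)}$ depend only on the insertion position and on $s(b)$, not on any permutation acting on $a$ or $b$, so they pass unchanged through the equivariance relation $\sigma a\circ_i\sigma' b=(\sigma\circ_i\sigma')(a\circ_{\sigma^{-1}(i)}b)$ once one sums over $i$; the argument is identical to the one used for $\{\;\circ\;\}$.

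\textbf{Main obstacle.} The only substantive difficulty is the sign bookkeeping in Step~2: matching the ad hoc signs $(-1)^{(i-1)s(b)}$ with the Koszul signs $(-1)^{s(b)s(c)}$ demanded by the pre--Lie identity, across the three cases of operadic associativity and in the presence of the commutator $C$ in $\gVect$. This is precisely the calculation that, as remarked in the introduction, becomes transparent once one views $\bullet$ as an odd gluing; working on $\Sigma\Op$ from the outset keeps the signs tractable and reduces everything to the unsigned pre--Lie calculation already used for $\circ$.
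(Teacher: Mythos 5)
Your overall route --- reduce via Lemma~\ref{shiftlem} to an ordinary Lie bracket for the grading $s$, and obtain that from the pre--Lie identity for $\bullet$ on the operadic suspension --- is exactly the mechanism the paper has in mind: it states this proposition with a \qed citing the literature, and two subsections later identifies $\Gbracket$ as the shift of the natural Lie bracket $\Liebracket$ on $s\O$. Steps 1 and 2 are fine.

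Step 3, however, contains a genuine gap. You assert that the signs $(-1)^{(i-1)s(b)}$ ``pass unchanged through the equivariance relation'' because they do not depend on the permutation. But they depend on the insertion position $i$, and that is precisely what the permutation moves: applying $\sigma a\circ_i b=(\sigma\circ_i id)(a\circ_{\sigma^{-1}(i)}b)$ and reindexing the sum turns $\sum_i(-1)^{(i-1)s(b)}[a\circ_{\sigma^{-1}(i)}b]$ into $\sum_j(-1)^{(\sigma(j)-1)s(b)}[a\circ_j b]$, which differs from $[a\bullet b]$ by the position--dependent factors $(-1)^{(\sigma(j)-j)s(b)}$. These do not cancel when $s(b)$ is odd. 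Concretely, for $a,b\in\Endo(V)(2)$ with $V$ in degree $0$ one has $s(b)=1$ and $[\sigma a\bullet b]=-[a\bullet b]$ for $\sigma=(12)$, while $[b\bullet\sigma a]=[b\bullet a]$; so neither $\bullet$ nor its antisymmetrization descends to the coinvariants of $\O(n)$ for the \emph{original} $\Sn$--action. The resolution, consistent with your own Step 2, is that the coinvariants in question are those of the suspended operad $s\O(n)=\Sigma^{n-1}(\O(n)\otimes sgn_n)$ (equivalently, of the odd/$\K$--twisted object, as in Theorem~A and Table~\ref{oddtable}): the $sgn_n$--twist of the action is exactly what absorbs the permuted position signs. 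Once you know $s\O$ is an honest operad, descent follows by applying the previous theorem verbatim to $s\O$ --- there is no need to re--run the sign check, and attempting to run it against the untwisted action as you do would fail.
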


\subsection{Suspensions and Shifts for Operads}  Observe that as a graded $\Sn$ representation,
\begin{equation*}
\Endo(\Sigma^{-1}k)(n)\cong \Sigma^{n-1}sgn_n,
\end{equation*}
where $sgn_n$ is the sign representation of $\Sn$.  Also, note there is a na\"ive product of operads defined as $(\O\otimes \P)(n):=\O(n)\otimes \P(n)$ with the diagonal $\Sn$ action and compositions.
\begin{df}
Given an operad $\O$ we define $\fr{s}\O$, the operadic suspension of $\O$, to be the operad $\op{O}\tensor\Endo(\Sigma^{-1}k)$.  We define $\fr{s}^{-1}\O$, the operadic desuspension of $\O$, to be the operad $\op{O}\tensor\Endo(\Sigma k)$.
\end{df}

Explicitly, $\fr{s}\O(n)=\Sigma^{n-1}(\O(n)\otimes sgn_n)$ with the natural induced diagonal operad structure.  Identifying elements of $\op{O}$ with their counterparts in $\fr{s}\op{O}$, we have:

\begin{prop}
Gerstenhaber's bracket $\Gbracket$ (Proposition $\ref{Gbracketprop}$) agrees with the natural Lie bracket $\Liebracket$ associated to the suspended operad $\fr{s}\O$.
\end{prop}
\begin{proof}
First observe that the total grading in $\op{O}$ differs with the internal grading of $\fr{s}\op{O}$ by a na\"ive shift.  Thus the above identification takes a degree $-1$ operation to a degree $0$ operation.

Now, using the Koszul sign rule we see that in the operad $\Endo(\Sigma^{-1} k)$ the operad composition operation is given by:
\begin{equation} \label{suspeq}
\circ_i\colon(e_1\wedge\dots\wedge e_n) \tensor (e_1\wedge\dots\wedge e_m)  \mapsto  (-1)^{(i-1)(m-1)} (e_1\wedge\dots\wedge e_{n+m-1})
\end{equation}
and we want to show that under this identification, the structure maps $\widetilde{\circ}_i$ of $\fr{s}\op{O}$ satisfy
\begin{equation*}
a\tilde{\circ}_ib = (-1)^{(i-1)(m-1)+(n-1)\text{deg}(b)} a\circ_ib
\end{equation*}
for $a\in \op{O}(n)$ and $b\in\op{O}(m)$.  To see this consider the sequence:
\begin{eqnarray*}
\fr{s}\op{O}(n)\tensor \fr{s}\op{O}(m)&=& \op{O}(n)\tensor \Sigma^{n-1}sgn_n\tensor \op{O}(m)\tensor \Sigma^{m-1}sgn_m  \\ &\to& \op{O}(n)\tensor \op{O}(m)\tensor \Sigma^{n-1}sgn_n\tensor \Sigma^{m-1}sgn_m \stackrel{\circ_i\tensor\circ_i}\longrightarrow \op{O}(n+m-1)\tensor \Sigma^{n+m-2}sgn_{n+m-1}
\end{eqnarray*}
applying the symmetric structure in the first arrow gives $(-1)^{(n-1)\text{deg}(b)}$ and applying the diagonal $\circ_i$ map gives $(-1)^{(i-1)(m-1)}$ as per line $\ref{suspeq}$.  Thus $\tilde{\circ}_i=\bullet_i$.
\end{proof}

\begin{cor}  The operations $\bullet_i$ satisfy the following `odd' associativity relations ---compare to equation $\ref{opassoceq}$.
\begin{equation}
\label{oddassoc}
(a \bullet_i b) \bullet_j c =
\begin{cases} (-1)^{(|b|-1)(|c|-1)}(a \bullet_j c) \bullet_{i+l-1} b & \mbox{if   } 1\leq j<i \\ a \bullet_i (b \bullet_{j-i+1} c) & \mbox{if   }  i\leq j \leq i+m-1 \\ (-1)^{(|b|-1)(|c|-1)}(a \bullet_{j-m+1} c) \bullet_i b & \text{if } i+m \leq j
\end{cases}
\end{equation}
\end{cor}

\subsubsection{Signs: An Essential Remark}
There are two ways in which to view the signs associated to the odd operations.
\begin{enumerate}
\item Simply as the shifted signs which may seem rather odd.
\item By setting $deg(\bullet)=1$ and using the Koszul rule of sign when permuting symbols.Here the symbols ``$\{$'' and ``$\}$'' are assigned degree $0$. That is as a $\Z/2\Z$ graded operation $\bullet$ is odd.
\end{enumerate}
In  geometric considerations, the operation $\bullet$ indeed often comes from an $S^1$ action, which one can consider the $\bullet$ to represent.

\begin{rmk}
In the operad or cyclic operad case (see the next section) the first version is viable, while in the modular (see section \ref{triplepar})
or more general case the second version is preferable and in fact necessary. Thus with hindsight, we will see that the second version is actually natural also in the non-modular context.
\end{rmk}

\subsubsection{Motivational example for $\fr{s}\O$}
Considering the endomorphism operad $\Endo(V)$, the operadic suspension arises if one considers $V[1]$ instead of $V$.
A map of degree $0$ from $V^{\otimes n}\to V$ gives a map of degree $n-1$ from $(V[1])^{\otimes n}=V^{\otimes n}[n]\to V[1]$ and one may show that as operads $\Endo(V[1])\simeq \fr{s}\Endo(V)$ (see e.g.~\cite{MSS}). Consequently,

\begin{prop}\cite{MSS}
\label{shiftprop}
$V$ is an $\O$-algebra if and only if $V[1]$ is an $\fr{s}\O$-algebra.
\end{prop}

\subsubsection{Degrees in the Hochschild complex}
If $A$ is an associative algebra, the spaces $\Endo(A)(n)$ form a complex,
the Hochschild cochain complex $CH^*(A,A)$. It is given by $CH^n(A,A)=Hom(A^{\otimes n},A)$
with the Hochschild differential, which is immaterial at the moment. To be a complex, $CH^*(A,A)$ must take the total grading, but this is not the natural operadic grading which is either $deg(a)$ in $\Endo(A)$ or $s(a)$ in $\fr{s}\Endo(A)=\Endo(A[1])$.

So although the operadic suspension $\fr{s}\Endo(V)$ of $\Endo(V)$ is a graded operad and it provides
Gerstenhaber's signs as the signs of the natural Lie bracket, as a graded vector space it is still one shift short from the Hochschild complex.  Adding one more na\"ive shift $\Sigma$, we obtain the right grading, so that $CH^*(A,A)$ is a dg algebra with respect to the cup product and the bracket has Gerstenhaber's signs, that is $CH^*(A,A)=\Sigma \fr{s}\Endo(A)$.  Presently we formalize this by introducing the notion of an odd operad.

\subsection{Na\"ive shifts and odd operads}\label{oddsec}

\begin{df}
For an $\SS$--module $\O$ its suspension $\Sigma \O$ is the $\SS$--module
$\{\Sigma \O(n)\}$. Likewise we define $\Sigma^{-1}\O$.
\end{df}

\begin{df}
An odd operad $\O$ in $\gVect$ is an $\SS$--module with operations $\bullet_i$ such that $\Sigma^{-1}\O$
together with the $\bullet_i$ is an operad.
\end{df}

Notice this means that in $\O$ the operations satisfy the equations
\ref{oddassoc} where $|\,.\,|$ is now just the degree in $\O$.

\begin{prop}
Given an odd operad $\O$, the vector space $\Op$ carries
an odd bracket $\Gbracket$.
\end{prop}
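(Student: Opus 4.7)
The plan is to reduce the statement to the previously proved Lie--bracket result for ordinary operads by transporting it across the desuspension. By definition, $\tilde\O:=\Sigma^{-1}\O$ together with the operations $\bullet_i$ is an ordinary (pseudo-)operad in $\gVect$, and the previous theorem applied to $\tilde\O$ provides a pre-Lie operation
\[
a\,\bar\circ\, b:=\sum_{i=1}^{\arity(a)}a\bullet_i b
\]
on $\tilde\O^\oplus$, whose commutator is a graded Lie bracket with respect to the grading $d$ of $\tilde\O$, namely $d(a)=|a|-1$ for $a\in\O$.

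As underlying (ungraded) vector spaces $\Op=\tilde\O^\oplus$, so the same formula
\[
\{a\bullet b\}\;:=\;a\,\bar\circ\, b\;-\;(-1)^{s(a)s(b)}\,b\,\bar\circ\, a
\]
defines a bilinear map on $\Op$. I would next verify that, under the grading $|\,\cdot\,|$ of $\O$, this is an odd Lie bracket. This is where Lemma~\ref{shiftlem} does the work: it identifies odd Lie brackets on a graded vector space $V$ with honest Lie brackets on $\Sigma V$. I would either apply the lemma directly with $V=\O$ (so $\Sigma V$ and $\Sigma^{-1}\O$ agree up to an even shift and carry the same Lie structure), or equivalently invoke the mod-$2$ parity identity
\[
(|a|-1)(|b|-1)\;\equiv\;(|a|+1)(|b|+1)\;=\;s(a)s(b)\pmod 2,
\]
which shows that the sign conventions for Lie-ness on $\tilde\O$ and for odd-Lie-ness on $\O$ coincide term by term. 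Consequently the graded anti-symmetry and graded Jacobi identity for $\bar\circ$ at grading $d$ immediately translate into the odd anti-symmetry and odd Jacobi identity for $\{\bullet\}$ at grading $|\,\cdot\,|$.

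The main obstacle is purely bookkeeping: one must be careful that the pre-Lie associativity for $\bullet_i$ on $\tilde\O$ is exactly the associativity \eqref{oddassoc} for $\bullet_i$ on $\O$ after relabelling $|a|\mapsto|a|-1$, so that the calculation of the $2$--$3$ symmetry of the associator in the proof of the operadic theorem goes through verbatim. Because these signs are controlled by Lemma~\ref{shiftlem} and the parity identity above, no new computation beyond that lemma and the previous theorem is required, and one obtains the odd bracket $\Gbracket$ on $\Op$ as claimed.
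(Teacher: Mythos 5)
Your proposal is correct and follows essentially the same route as the paper, whose entire proof is ``This follows directly from Lemma \ref{shiftlem}'': you desuspend to recover an honest operad, invoke the earlier Lie--bracket theorem there, and transport the bracket back via Lemma \ref{shiftlem} together with the mod--$2$ parity identity. You simply spell out the bookkeeping that the paper leaves implicit.
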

\begin{proof}
This follows directly from Lemma \ref{shiftlem}.
\end{proof}

\begin{cor} \label{r1}
 Given an operad $\CO$
the odd operad $\Sigma \fr{s}\CO$ naturally carries an odd Lie bracket,
which is the shift of the natural Lie bracket on $\fr{s}\CO$.
\end{cor}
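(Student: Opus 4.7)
The plan is to chain together two preceding results with Lemma \ref{shiftlem}; the argument is essentially structural rather than computational. First I would verify that $\Sigma s\CO$ is an odd operad. The Proposition just above establishes that $s\CO$ is an operad in $\gVect$ with compositions $\bullet_i$, and by the very definition of odd operad in \S\ref{oddsec} all that is required of $\Sigma s\CO$ is that its desuspension $\Sigma^{-1}(\Sigma s\CO) = s\CO$ be an operad, equipped with the transported operations $\bullet_i$. This is a one-line observation and requires no calculation.

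Second, I would invoke the Proposition immediately preceding the Corollary, which asserts that every odd operad $\O$ carries a natural odd Lie bracket $\Gbracket$ on $\Op$, defined by the formula $\{a \bullet b\} = \sum_i a \bullet_i b - (-1)^{s(a)s(b)} \sum_i b \bullet_i a$. Applied to $\O = \Sigma s\CO$, this yields an odd Lie bracket on $(\Sigma s\CO)^{\oplus}$ with no further work.

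Finally, I would apply Lemma \ref{shiftlem} to identify this odd bracket with the shift of the natural Lie bracket on $s\CO$ guaranteed by the first Theorem of the section. The underlying bilinear operation is the same in both cases; Lemma \ref{shiftlem} says that whether one calls it a Lie bracket or an odd Lie bracket depends only on the grading of the ambient space, and a single na\"ive shift between $s\CO$ and $\Sigma s\CO$ converts one convention into the other. This is precisely the sense in which the odd bracket on $\Sigma s\CO$ is the shift of the even bracket on $s\CO$. The only point demanding care is the bookkeeping of degrees, namely distinguishing the operad grading $s$ on $s\CO$ from the natural grading on $\Sigma s\CO$, but there is no substantive difficulty and the corollary is a formal consequence of the preceding results.
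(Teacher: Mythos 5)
Your argument is correct and is exactly the route the paper intends: the corollary is an immediate consequence of the definition of odd operads as na\"ive shifts, the preceding Proposition giving the odd bracket on any odd operad, and Lemma \ref{shiftlem} identifying odd Lie brackets with shifted Lie brackets. The paper leaves the proof implicit for precisely this reason, and your degree bookkeeping between the gradings $s$ on $s\CO$ and $|\,.\,|$ on $\Sigma s\CO$ is the only point of substance, which you handle correctly.
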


\subsubsection{The Hochschild complex as an odd operad}
To sum up this section, the most natural way to think about the Hochschild complex is as an odd operad
$CH^*(A,A)=\Sigma \fr{s} \Endo(A)$. This provides all the correct signs and degrees.  In this fashion one can generalize the bracket to the
cyclic and modular cases.

We briefly collect together the relevant degrees in Table \ref{degtable}.

\begin{table}
\begin{tabular}{l|l}
$a\in$&natural degree of $a$\\
\hline
$\CO(n)$&$deg(a)$\\
$s\CO(n)$&$s(a)=deg(a)+n-1$\\
$\Sigma s\CO(n)$&$|a|=deg(a)+n$
\end{tabular}
\caption{\label{degtable} Natural degrees in suspensions and shifts}
\end{table}
%
%

%
%

\section{Cyclic, anti-cyclic operads and a cyclic bracket}\label{cycsec}
The first generalization we will give is for the cyclic case.
We briefly recall the definitions in terms of operads with extra structure and in terms of arbitrary finite sets.

\subsection{The $\Snp$ definition of cyclic operads}
In an operad one can think of $\O(n)$ as having $n$ inputs and one output. The $\Sn$ action
then permutes the inputs.
The idea of a cyclic operad is that the output is also treated democratically, i.e.\ there is an action of $\SS_{n+1}$ on $\O(n)$ which also permutes the output. Usually one labels the inputs by $\{1,\dots,n\}$
and the output by $0$. In order to formalize this we follow \cite{GeK1}
and define $\Snp$ to be the bijections of the
set $\{0,1,\dots,n\}$. Then $\Sn$ is naturally included into $\Snp$ as the bijections that keep $0$ fixed.
As a group $\Snp\simeq \SS_{n+1}$ and it is generated by $\Sn$ and the long cycle $\tau=(01234\cdots n)$. Let $C_{n+}\subset \Snp$ be the cyclic group generated by $\tau$.

Given an $\Snp$ module $(M,\rho)$ we denote the action of $\tau$ by $T$, i.e.\ for $m\in M$.
$T(m)=\rho(\tau)(m)$. We also define the operator
$N=1+T+\cdots +T^{n}$ on $\O(n)$.

\begin{df}\cite{GeK1}
A {\em cyclic operad} is an operad $\O$ along with an action of $\Snp$ action on each $\O(n)$
which extends the action of $\Sn$ such that the following conditions are met
\begin{enumerate}
\item $T(id)=+id$ where $id\in \O(1)$ is the operadic unit.
\item $T(a \circ_{\arity(b)} b)=+(-1)^{|a||b|}T(b)\circ_1T(a)$
\item  $T(a\circ_i b) = T(a)\circ_{i+1} b \text{ for } 1\leq i \leq n-1$.
\end{enumerate}

Alternatively, if the axioms hold after replacing the two $+$ signs with $-$ signs, then $\O$ is called an {\em anti-cyclic operad}.
A weaker structure than that of cyclic operad is that of a {\em non-$\Sigma$ cyclic operad}. Here one only requires
an action of $C_{n+}$, the cyclic subgroup of $\Snp$,  on $\O(n)$ along with the above axioms.
\end{df}

The collection of objects $\O(n)$ together with their $\Snp$ action is called a cyclic $\SS$-module.
In order to get the same indexing for the symmetric groups and the operad one sets $\O((n)):=\O(n-1)$.
Here, morally, $n$ is the number of inputs and outputs.

\begin{ex}
The standard example of a cyclic operad is $\Endo(V)$ where $V$ is a (graded) vector space of finite type with a (graded) non--degenerate even
bilinear form $\form$.
The operation $T$ on $f\in \Endo(n)$ is then defined via $\form$ by
\begin{equation}
\la v_0, Tf(v_1 \odo v_n)\ra= \pm \la v_n, f(v_0\odo v_{n-1})\ra
\end{equation}
where in the graded case $\pm$ is the sign given by the Koszul sign rules.
Another way to phrase this is as follows. $\form$ gives an isomorphism
between $V$ and its dual space $\check V$.
Thus
\begin{equation}
\Endo(V)(n)= Hom(V^{\otimes n},V)\simeq \check V^{\otimes n}\otimes V
\stackrel{\form}{\longrightarrow} \check V^{\otimes n+1}
\end{equation}
Now on the last term there is an obvious $\Snp$ action permuting the factors
and this action can be transferred to $\Endo(V)(n)$ via the isomorphism.
\end{ex}

\begin{ex}  Let $V$ be a symplectic vector space.  Then $\Endo(V)$ is an anticyclic operad.  The action is then given as in the last example. The extra minus sign comes from the fact that the symplectic form is skew symmetric.
\end{ex}

\begin{rmk}
The last two examples can be unified using the notion of operadic
correlation functions from \cite{hoch2}. Here the correlation
functions are given on $\check {V}^{\otimes n}$ and the
propagators by the Casimir elements of $\form$,
where now these elements encode the signs. This fits well with the tree
picture and Feynman diagrams since the propagators are associated to the {\em edges} and
not the vertices.
\end{rmk}

\begin{ex}
\label{cvex}
Examples of cyclic operads are given by the cyclic extension of the operads
$Comm$, $Lie$ and $Assoc$. These are the operads whose algebras are precisely
associative and commutative, Lie and associative algebras.
\end{ex}

\subsubsection{Algebras over (anti)-cyclic operads}
An algebra over a cyclic respectively anti-cyclic operad $\O$
is a vector space $V$ together with a non-degenerate even symmetric form
or respectively a non-degenerate even skew symmetric form and
a morphism of cyclic, respectively anti-cyclic operads from $\O$ to $\Endo(V)$.

\subsection{Products and suspension for (anti)-cyclic operads}
\label{productssec}

The product of two cyclic operads or two anti-cyclic operads is a cyclic operad while the product of a cyclic and an anti-cyclic operad is anti-cyclic.

\begin{ex}
Given an cyclic operad $\O$ and a symplectic vector space $V$ with a symmetric
non-degenerate pairing the operad $\O\otimes \Endo(V)$ is still anti-cyclic.
\end{ex}
Recall that the operadic suspension was defined via $-\tensor \Endo(\Sigma^{-1}k)$.  Since $\Sigma^{-1}k$ is canonically a symplectic vector space, we have:

\begin{lem}
The operadic suspension of a cyclic operad is an anti-cyclic
operad and vice-versa.
\end{lem}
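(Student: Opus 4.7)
The plan is to verify the two anti--cyclic axioms for $s\O$ directly, using three ingredients: the cyclic axioms of $\O$, the suspension formula $\Sigma^{n-1}a \bullet_i \Sigma^{m-1}b = (-1)^{(i-1)(m-1)}\Sigma^{n+m-2}(a \circ_i b)$ for the operadic composition in $s\O$, and the fact that on $s\O(n) \simeq \O(n)\otimes sgn_{n+1}$ (with the diagonal $\Snp$--action) the long cycle $\tau$ acts as $(-1)^n \cdot T_\O$, since $(0,1,\dots,n)\in\SS_{n+1}$ factors into $n$ transpositions.

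For the unit axiom, in arity $1$ we have $\tau\in\SS_{1+}=\SS_2$ a single transposition of sign $-1$, so $T_{s\O}(id) = -T_\O(id) = -id$ (using $T_\O(id)=id$ from the cyclic axiom of $\O$), which is precisely the anti--cyclic unit relation.

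For the composition axiom I reduce both sides of
\[
T_{s\O}(a\bullet_1 b) = -(-1)^{|a||b|}\, T_{s\O}(b) \bullet_{\arity(b)} T_{s\O}(a)
\]
to a common form in $\O$. On the left: unwind $\bullet_1=\circ_1$, apply the cyclic axiom of $\O$ to rewrite $T_\O(a\circ_1 b)$ as $(-1)^{deg(a)deg(b)}T_\O(b)\circ_m T_\O(a)$, and multiply by the factor $(-1)^{n+m-1}$ from $\tau\in\SS_{n+m}$ acting on $sgn_{n+m}$. On the right: pull the degree--zero scalars $(-1)^n,(-1)^m$ out of the bilinear $\bullet_m$ and apply $\bullet_m=(-1)^{(m-1)(n-1)}\circ_m$. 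Expanding $|a||b|=s(a)s(b)=(deg(a)+n-1)(deg(b)+m-1)$ modulo $2$ and matching exponents yields the desired anti--cyclic identity.

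The vice--versa direction follows by the same computation with the cyclic axiom of $\O$ replaced by the anti--cyclic one: the extra minus sign thereby introduced absorbs the leading minus in the anti--cyclic axiom of $s\O$, producing the cyclic relation on the operadic suspension. The main obstacle is the careful bookkeeping of the several independent sign sources --- the (anti--)cyclic axiom of $\O$, the $sgn_{n+1}$--factor of the diagonal action, the suspension formula for $\bullet_i$, and the Koszul rule --- requiring attention to the identification $sgn_n\simeq sgn_{n+1}$ as $\Sn$--modules (which underlies the compatibility between the sign convention for the operadic composition and that for the $\Snp$--extension) and to the use of the shifted grading $s(a)=deg(a)+n-1$ when interpreting $|a|$ in the axioms for $s\O$.
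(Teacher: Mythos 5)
Your strategy --- identifying $T_{s\O}$ with $(-1)^{n}T$ via the sign of the $(n+1)$--cycle acting on $sgn_{n+1}$, checking the unit axiom in arity $1$, and reducing the composition axiom to a sign match --- is exactly the ``easy computation'' the paper alludes to (it records no proof at all), and the easy half is right: $T_{s\O}(id)=-id$ and $T_{s\O}=(-1)^{n}T$ on $s\O(n)$ are both correct. The gap is in the one step you assert rather than carry out, ``matching exponents yields the desired anti--cyclic identity.'' If you run your own reduction using the paper's displayed formula $\Sigma^{n-1}a\bullet_i\Sigma^{m-1}b=(-1)^{(i-1)(m-1)}\Sigma^{n+m-2}(a\circ_i b)$, whose sign involves only arities, the left side of the axiom acquires the sign $(-1)^{n+m-1+deg(a)deg(b)}$ while the right side acquires $(-1)^{1+s(a)s(b)+n+m+(m-1)(n-1)}$; since
\begin{equation*}
s(a)s(b)\equiv deg(a)deg(b)+deg(a)(m-1)+deg(b)(n-1)+(n-1)(m-1)\pmod 2,
\end{equation*}
the two sides differ by $(-1)^{deg(a)(m-1)+deg(b)(n-1)}$, which is not identically $1$. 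So the computation as you describe it does not close.

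The missing ingredient is the Koszul sign in the interchange $\Sigma^{n-1}\O(n)\otimes\Sigma^{m-1}\O(m)\simeq\Sigma^{n+m-2}(\O(n)\otimes\O(m))$: moving the $m-1$ suspension coordinates of $b$ past $a$ contributes $(-1)^{(m-1)deg(a)}$ on the left-hand side, and symmetrically moving the $n-1$ coordinates of $T(a)$ past $T(b)$ contributes $(-1)^{(n-1)deg(b)}$ on the right-hand side; the quotient of these is precisely the residue above. You list ``the Koszul rule'' among your sign sources but never actually deploy it --- every formula you invoke carries only arity-dependent signs. To repair the argument, either make that interchange explicit (equivalently, use the degree-sensitive form of the suspension structure maps, as one extracts from $s\Endo(V)\simeq\Endo(V[1])$), or note that your exponent match is literally correct on the arity-induced grading, i.e.\ when $deg(a)=deg(b)=0$. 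With that fix the ``vice--versa'' direction goes through exactly as you describe, the extra minus sign from the anti--cyclic axiom of $\O$ cancelling the leading minus in the target identity.
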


\begin{ex}
In the case of $\Endo(V)$ for a pair $(V,\form)$, we have the isomorphism
$\Endo(V[1])\simeq \fr{s}\Endo(V)$. Now $\form$ gives a pairing between
$V[1]$ and $V[-1]$ so that  we get an isomorphism $\Endo(V[1])(n)\simeq
(\check V[-1])^{\otimes n}\otimes V[1]$. This space has natural degree $n-1$ and has a natural $\Snp$
action. Since all the degrees are shifted by one, we see that if $\form$ is
symmetric, $\fr{s}\Endo$ is anti--cyclic and if it is skew $\fr{s}\Endo$ is cyclic.
\end{ex}

\subsection{Na\"ive suspension and odd versions}
We again use a na\"ive shift, as in \ref{oddsec}, and define an odd cyclic operad to be the result of the na\"ive shift of an anti-cyclic operad.
In particular $\O$ is a cyclic operad if and only if $\Sigma \fr{s}\O$ is an odd cyclic operad.

\subsection{The tree picture, coinvariants and arbitrary finite sets}
Intuitively, operads correspond to rooted trees whereas cyclic operads
correspond to (non-rooted) trees.  This intuition can be made precise in the language of triples (see Section $\ref{triplepar}$).  There is an obvious forgetful functor from rooted trees to trees, which gives the inclusion of the operations
corresponding to a rooted tree into those of a cyclic operad.
The axioms of a cyclic operad guarantee that the
operation of a rooted tree is equivariant under changes of the root.

On the other hand given just a tree, to make it rooted,
requires a choice of a root.  If the tails are not labeled, there is no canonical choice.
The only thing to do is to sum over all of these choices which corresponds to using the operator $N$, i.e. passing to (co)invariants.

\begin{caveat}
\label{caveat}
Here there is one serious caveat. When composing along a rooted tree, with one edge say, one
can identify the set of flags $n\setminus \{i\} \amalg m$
with $n+m-1$ by first enumerating the first $n$ elements until $i$ is reached
then enumerating the $m$ elements of the second set and the rest of
the elements of the first set. That is the set above has a natural linear
order.

On the other hand, when composing along a non-rooted tree, as in the cyclic case, the set
$n\setminus \{i\} \amalg m\setminus \{j\}$ does not have a canonical linear
order, but only a cyclic one.
 If $j=0$ and $i\neq 0$, then we are in the case above
and we do have such an order. Likewise if $i=0$ and $j\neq 0$, we again
can make a linear order by switching the factors. This is essentially equivalent
to the condition in the definition of a cyclic operad.

Notice that things are completely unclear where both $i=0$ and $j=0$.  More on this below; see \S\ref{circijsec}.
\end{caveat}
These issues can be circumvented by working with coinvariants or with cyclic operads over arbitrary finite sets, as we presently recall.

\subsubsection{(Cyclic) Coinvariants}
Given a cyclic or anti-cyclic or odd cyclic operad $\O$ we define its space of coinvariants to be $\OSpp:=\bigoplus \O(n)_{\Snp}$.

We will also consider just the cyclic coinvariants $\CO_{C}^{\oplus}:=\bigoplus \O(n)_{C_{n+}}$ where $C_{n+}$ is the cyclic subgroup generated by $T$ in $\Snp$.  The cyclic coinvariants also make sense for a non-$\Sigma$ cyclic operad.

\subsubsection{Cyclic operads via arbitrary indexing sets}\label{afsets}
A nice way to think about cyclic operads is to look at operads in arbitrary sets. We think of the inputs and the output labeled by a set $S$.
 That is we get objects $\O(S)$ for any finite set $S$ together
with isomorphisms $\phi_*:\O(S)\to \O(S')$ for each bijection $\phi:S\to S'$.
As well as  structure maps
\begin{equation}
\scirct: \O(S)\otimes \O(T) \to \O((S\setminus\{s\})\amalg (T\setminus\{t\}))
\end{equation}
these maps are equivariant with respect to bijections and associative in the appropriate sense.

The cyclic or anti-cyclic condition then translates to
\begin{equation}
a\scirct b =\pm (-1)^{deg(a)deg(b)} b\leftsub{t}{\circ}_s a
\end{equation}
where the extra minus sign is present in the anti-cyclic case.

\subsubsection{Moving between the biased and un-biased pictures}

Given a cyclic operad $\O$, one sets
\begin{equation}
\O(S)=\left(\bigoplus_{\text{bijections } S \leftrightarrow \{0,1,\dots,|S|-1\}}\O(|S|-1)\right)_{\Snp}
\end{equation}
Where $\Snp$ acts diagonally on both the sum, by acting on the bijections, and the summands.
Given the full finite set version,
the  version using the natural numbers is basically given by inclusion.

For operads switching from $\O(n)$ to $\O(X)$
corresponds to switching from the category of finite sets with
bijections to its skeleton, the category with objects the natural numbers
and only automorphisms, where $n$ represents the set $\{1,\dots, n\}$
and $Aut(n)=\Sn$. For cyclic operads $n$ actually represents the set
 $\{0,1,\dots, n\}$ and $Aut(n)=\Snp$.

Following Markl, we will call the skeletal version involving
only the natural numbers the biased version. The finite set version
is then the un-biased one.

\subsubsection{Coinvariants and the un-biased setting}\label{coinvsec}

When working with finite sets, things become nicer on the level of coinvariants.  Here it suffices to take $\OSpp$.  The categorical proof is that this represents the colimit over the category of finite sets with bijections of $\O$ viewed as the functor that assigns $\O(S)$ to a set $S$.

A pedestrian way to say this is that taking coinvariants, we can first identify sets which are in bijection with each other and then only have to mod out by automorphisms.  For each finite set $S$ we can choose $\{0, \dots, |S|-1 \}$ as such a representative.

\subsection{The bracket in the anti-cyclic case}
\begin{df} Let $\O$ be an anti-cyclic operad
For $a\in \O(S)$ and $b\in \O(T)$ we define
\begin{equation}
[a\odot b]:=\sum_{s\in S,t\in T}a \scirct b
\end{equation}
\end{df}

\begin{prop}
\label{antibracketprop}
$\cLiebracket$ is anti-symmetric and satisfies the Jacobi
 identity for any three elements
in the sense that for $a\in \O(S),b\in \O(T),c\in \O(U)$
\begin{multline}
[a\odot b]=-(-1)^{deg(a)deg(b)}[b\odot a]  \in \bigoplus_{s\in S\,t\in T}
\O((S \setminus s)\amalg (T\setminus t))\\
\mbox{}(-1)^{deg(a)deg(c)}[a\odot[b\odot c]]+(-1)^{deg(a)deg(b)}[b\odot[c\odot a]]+
(-1)^{deg(c)deg(b)}[c\odot[a\odot b]]=0\\
\in \bigoplus_{s\in S,t\in T,u \in U}
\O((S \setminus s)\amalg (T\setminus t)\amalg (U\setminus u))
\end{multline}
\end{prop}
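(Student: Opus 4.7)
The plan is to verify both claims by direct computation in the un--biased language of \S\ref{afsets}, using only (i) the associativity of the operadic compositions $\scirct$ extended to arbitrary indexing sets, and (ii) the anti--cyclic axiom $a\scirct b = -(-1)^{deg(a)deg(b)}\, b\leftsub{t}{\circ}_s a$. No shift or suspension is needed: the un--biased framework avoids the difficulty flagged in Caveat \ref{caveat} because each term lands in a direct summand explicitly indexed by the pair of glued flags, so there is no ambiguity of linear order to worry about.

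For anti--symmetry I would apply the anti--cyclic axiom term by term in the double sum
\[
[a \odot b] = \sum_{s\in S,\, t\in T} a\scirct b,
\]
pulling the sign $-(-1)^{deg(a)deg(b)}$ outside, and then swap the names of the dummy indices $s \leftrightarrow t$ to recognize the result as $-(-1)^{deg(a)deg(b)}[b\odot a]$. Since the double sum ranges over all pairs $(s,t)\in S\times T$, the summation indices on both sides match, and each individual summand lives in $\O((S\setminus s)\amalg(T\setminus t))$, which is the same space on either side.

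For the Jacobi identity I would expand each nested bracket as a quadruple sum; for instance,
\[
[a\odot[b\odot c]] = \sum_{s\in S}\;\sum_{t\in T,\, u\in U}\;\sum_{w\in(T\setminus t)\amalg(U\setminus u)} a\leftsub{s}{\circ}_w (b\scirct c),
\]
and split the inner sum according to whether $w$ lies in $T\setminus t$ (so $a$ attaches to $b$) or in $U\setminus u$ (so $a$ attaches to $c$). Using operadic associativity, each such term rewrites as a two--stage composition along a tree with three corolla vertices $a$, $b$, $c$ connected by two edges. Anti--cyclicity is then applied as needed to bring the corresponding expansions of $[b\odot[c\odot a]]$ and $[c\odot[a\odot b]]$ into the same normal form.

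The last step is the combinatorial organization: each three--vertex tree, specified by its two pairs of glued flags, appears in exactly two of the six summands on the left--hand side of the Jacobi identity, and I would check that the signs contributed by the anti--cyclic axiom exactly match, modulo $2$, the external Koszul signs $(-1)^{deg(a)deg(c)}$, $(-1)^{deg(a)deg(b)}$, $(-1)^{deg(c)deg(b)}$, so that the two copies cancel. The main obstacle is precisely this sign bookkeeping: each invocation of the anti--cyclic rule contributes a sign $-(-1)^{deg(x)deg(y)}$ depending on which two operations are commuted past one another, and one has to track these systematically over the few topological types of trees. Because each term lives in the direct summand indexed by its chosen pairs of glued flags, the identity reduces to a pointwise check on each summand, with no cross--cancellation between different summands.
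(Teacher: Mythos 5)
Your proposal is correct and follows essentially the same route as the paper: antisymmetry is read off termwise from the anti--cyclic axiom, and the Jacobi identity is verified by splitting each quadruple sum according to whether the outer gluing attaches to the first or second factor of the inner bracket, then using associativity and anti--cyclicity to pair the resulting six families of terms (two per topological type of three--vertex tree) with cancelling signs. The paper's displayed chain of equalities is exactly the sign bookkeeping you defer, organized by showing the two halves of $(-1)^{deg(a)deg(c)}[a\odot[b\odot c]]$ equal the negatives of the other two double brackets.
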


\begin{proof}
The proof is a straightforward calculation.
The first equation directly  follows from the
antisymmetry of the operations $\scirct$ for an anti--cyclic operad.

Checking the Jacobi identity is straight forward: $(-1)^{deg(a)deg(c)}[a\odot [b\odot c]]=$

$$\begin{array}{ll}
 &  (-1)^{deg(a)deg(c)} \ds\sum_{\substack{t^\prime\in T\coprod U \setminus\{t,u\} \\ s \in S}}
\ds \sum_{\substack{t\in T \\ u\in U}} a\xycirc{s}{t^\prime}(b\xycirc{t}{u}c) \\
=&  (-1)^{deg(a)deg(c)} \ds\sum_{\substack{t^\prime\in T \setminus\{t\} \\ s \in S}}
\ds \sum_{\substack{t\in T \\ u\in U}} a\xycirc{s}{t^\prime}(b\xycirc{t}{u}c)+ (-1)^{deg(a)deg(c)} \ds\sum_{\substack{t^\prime\in U \setminus\{u\} \\ s \in S}}\ds \sum_{\substack{t\in T \\ u\in U}} a\xycirc{s}{t^\prime}(b\xycirc{t}{u}c) \\
\end{array}
$$
$$
\begin{array}{ll}
=&  (-1)^{deg(a)deg(c)} \ds\sum_{\substack{t^\prime\in T \setminus\{t\} \\ s \in S}}
\ds \sum_{\substack{t\in T \\ u\in U}} (a\xycirc{s}{t^\prime}b)\xycirc{t}{u}c-(-1)^{deg(a)deg(b)} \ds\sum_{\substack{t^\prime\in U \setminus\{u\} \\ s \in S}}\ds \sum_{\substack{t\in T \\ u\in U}} (b\xycirc{t}{u}c)\xycirc{t^\prime}{s}a \\
=&  -(-1)^{deg(b)deg(c)} \ds\sum_{\substack{t^\prime\in T \setminus\{t\} \\ s \in S}}
\ds \sum_{\substack{t\in T \\ u\in U}} c\xycirc{u}{t}(a\xycirc{s}{t^\prime}b)-(-1)^{deg(a)deg(b)} \ds\sum_{\substack{t^\prime\in U \setminus\{u\} \\ s \in S}}\ds \sum_{\substack{t\in T \\ u\in U}} b\xycirc{t}{u}(c\xycirc{t^\prime}{s}a) \\
=& -(-1)^{deg(b)deg(c)}[c\odot [a\odot b]]-(-1)^{deg(a)deg(b)}[b\odot [c\odot a]]
\end{array}$$
\end{proof}
Notice that in this statement, we use the conventions stated in the beginning.

In view of \S\ref{coinvsec} the following theorem is now straightforward.
\begin{thm}
If $\O$ is an anti-cyclic operad then $\cLiebracket$ induces
a Lie bracket on $\OSpp$.
\end{thm}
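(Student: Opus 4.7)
The plan is to deduce the theorem from Proposition \ref{antibracketprop} by verifying that $\cLiebracket$ descends to $\OSpp$, and then observing that on the coinvariants the sign factors in the antisymmetry and Jacobi identity collapse to the standard (even) Lie ones because the relevant degrees match.

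First I would work in the unbiased (finite--set) picture of \S\ref{afsets}. Given $[a] \in \O(S)_{\mathrm{Aut}(S)}$ and $[b] \in \O(T)_{\mathrm{Aut}(T)}$, I define $[[a]\odot[b]]$ as the image of $\sum_{s\in S,\, t\in T} a\scirct b$ in the coinvariants of $\bigoplus_{s,t}\O((S\setminus s)\amalg(T\setminus t))$. The map is well defined because the structure maps $\scirct$ are equivariant under bijections: for any $\phi\colon S\to S'$ and $\psi\colon T\to T'$, one has $\phi_* a \leftsub{\phi(s)}{\circ}_{\psi(t)} \psi_* b = (\phi\amalg\psi)_*(a\scirct b)$. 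Summing over $s\in S,t\in T$ is intrinsic to the pair $(S,T)$ and thus manifestly invariant under replacing $a,b$ by bijective reindexings. By \S\ref{coinvsec}, passing to $\Snp$--coinvariants amounts precisely to identifying elements along such bijections, so the formula descends to a well-defined bilinear operation on $\OSpp$.

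Next I would invoke Proposition \ref{antibracketprop} on the level of arbitrary finite sets to get antisymmetry and the graded Jacobi identity for $\cLiebracket$ before taking coinvariants. Pushing both sides of those identities through the quotient map to $\OSpp$ gives the same relations on coinvariants. Finally, I observe that on $\OSpp$ the grading of interest is simply the internal degree $deg$, and since the signs in Proposition \ref{antibracketprop} are exactly the Koszul signs for $deg$, the resulting bracket on $\OSpp$ is an ordinary (even) Lie bracket.

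The only real subtlety, and therefore the main obstacle, is the bookkeeping in checking that the structure map $\scirct$ is bijection--equivariant in the precise form needed so that the unordered sum $\sum_{s,t}a\scirct b$ descends to coinvariants without acquiring spurious signs; this is the reason for working unbiased, per the Caveat \ref{caveat}, where no canonical linear order on $(S\setminus s)\amalg(T\setminus t)$ is needed. Once this equivariance is in hand, the descent is formal and the Lie identities are inherited directly from Proposition \ref{antibracketprop}.
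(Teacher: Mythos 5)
Your proposal is correct and follows essentially the same route as the paper: the paper's proof consists of the remark that, in view of the coinvariants description of \S\ref{coinvsec}, the bracket descends by bijection--equivariance of the $\scirct$ and the Lie identities are inherited from Proposition \ref{antibracketprop}. Your write--up simply makes the descent argument explicit (and your final remark about signs ``collapsing'' is unnecessary, since Proposition \ref{antibracketprop} already states the identities with the ordinary Koszul signs for $deg$).
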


We will denote this Lie bracket by the same symbol.

\begin{rmk}
Notice that unlike in the operad case, this bracket is {\em not the anti--symmetrization of a pre--Lie structure}. It is actually the choice
of the root that gives this extra structure in the operad case through the
linear orders on the compositions. Here
no such consistent choice for linear orders exists. See also \ref{caveat}
and \S \ref{circijsec}.
\end{rmk}

\begin{ex}\cite{Kthree,CV}
Fixing a sequence of vector spaces of dimension $2n$ with a symplectic
form on them, we immediately get three sequences of Lie algebras from
the anti--cyclic operads $Comm\otimes \Endo(V^n),Lie\otimes \Endo(V^n)$ and
$Assoc\otimes \Endo(V^n)$. These are exactly the three sequences
considered by Kontsevich in his seminal paper \cite{Kthree} and further studied
by \cite{CV}. There is also the generalization of this construction to cyclic quadratic Koszul operads \cite{Ginz}.
\end{ex}

\begin{ex}
Likewise we can fix a sequence of dimension $n$ vectors spaces $V^n$ with
a symmetric non--degenerate bilinear form and consider the sequence of Lie algebras obtained by  $pLie\otimes \Endo(V^n)$.
\end{ex}
This begs the
\begin{qu}
What is the underlying geometry in the $pLie$ case?
Or in the other cases of \cite{chapoton}?
\end{qu}

\begin{ex}
Of course any suspension of a cyclic operad will yield an anti-cyclic one and hence a Lie algebra
and any tensor product of a cyclic operad with an anti-cyclic one will give
and anti-cyclic operad and hence a Lie algebra.
\end{ex}

\subsection{Lift to the cyclic coinvariants, non-$\Sigma$ version}
As mentioned before, the set $n\setminus \{i\} \amalg m\setminus \{j\}$ has
no canonical linear order,  but it does have a cyclic order.
Hence we can identify it with $n+m-1$ up to the action of $C_{n+m-1+}$.
Using this identification, we can restrict to the $C_{n+}$ coinvariants of
the sets $n$ to obtain a bracket on the cyclic coinvariants and
since we are only taking $C_{n+}$ coinvariants it actually suffices to take a
non-$\Sigma$ cyclic operad.

\begin{thm}\label{r2}
If $\O$ is an anti-cyclic operad then $\cLiebracket$ induces
a Lie bracket on the cyclic co-invariants
$\CO_{C}^{\oplus}:=\bigoplus \O(n)_{C_{n+}}$.  The result also holds true for $\O$ a non-$\Sigma$ anti-cyclic operad.
\end{thm}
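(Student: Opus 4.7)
The plan is to work in the unbiased (arbitrary finite--set) formulation of \S\ref{afsets}, where the bracket reads
\[
[a\odot b]\;=\;\sum_{s\in S,\,t\in T}\, a\scirct b \;\in\; \bigoplus_{s\in S,\,t\in T}\O\bigl((S\setminus\{s\})\amalg(T\setminus\{t\})\bigr).
\]
Because we sum over every pair $(s,t)$, the expression is invariant under any bijection of $S$ and of $T$ separately: such bijections merely re--index the summation. In particular the $C_{n+}\times C_{m+}$--action on $\O(S)\otimes\O(T)$ leaves the bracket fixed, so $[\,\cdot\,\odot\,\cdot\,]$ descends on the input side to the tensor product of cyclic coinvariants without further work.

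What remains is to identify the output with an element of $\O(n+m-1)_{C_{(n+m-1)+}}$. Here I invoke the point flagged in Caveat~\ref{caveat}: although the set $(S\setminus\{s\})\amalg(T\setminus\{t\})$ carries no canonical linear order, it does carry a canonical \emph{cyclic} one inherited from the cyclic orders on $S$ and $T$. Indeed, removing $s$ from the cycle $S$ linearises $S\setminus\{s\}$, and similarly for $T\setminus\{t\}$; concatenating the two linear orders and closing up yields a cyclic order on the disjoint union that is independent of the choices of $s$ and $t$ up to overall cyclic rotation. Any bijection to $\{0,\dots,n+m-2\}$ respecting this cyclic order therefore presents $a\scirct b$ as a well--defined class in $\O(n+m-1)_{C_{(n+m-1)+}}$, and the composite recipe is our candidate bracket.

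With this identification, antisymmetry and Jacobi are direct transcriptions of Proposition~\ref{antibracketprop}: those identities already hold at the unbiased level as equalities between sums indexed by the appropriate output sets, and the construction above respects passage to cyclic coinvariants at each stage. For the non--$\Sigma$ version I observe that nothing in either the definition or the verification uses more than the compositions $\scirct$ and the $C_{n+}$--equivariance needed to form the coinvariants; the full $\Snp$--action plays no role. The identical argument therefore establishes the non--$\Sigma$ statement.

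The main technical point to verify is the compatibility, on the output side, between a one--step cyclic rotation of the distinguished element $s\in S$ and the action of a generator of $C_{(n+m-1)+}$ on the output set. One must confirm that the induced shift of the cyclic order on $(S\setminus\{s\})\amalg(T\setminus\{t\})$ really is a generator of $C_{(n+m-1)+}$ and that the Koszul sign it produces is precisely the sign $-(-1)^{|a||b|}$ appearing in the anti--cyclic axiom $T(a\circ_1 b)=-(-1)^{|a||b|}T(b)\circ_{\arity(b)}T(a)$. It is here that the anti--cyclic hypothesis (as opposed to merely cyclic) is essential; a single explicit shift--by--one computation suffices, with the general case following by iteration.
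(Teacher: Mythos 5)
Your argument is essentially the paper's: the bracket descends because the glued set $(S\setminus\{s\})\amalg(T\setminus\{t\})$ carries a canonical cyclic (though not linear) order, so each term lands in a well-defined class in $\O(n+m-1)_{C_{(n+m-1)+}}$, and antisymmetry and Jacobi are then projected from Proposition \ref{antibracketprop}, with the non--$\Sigma$ case following since only the $C_{n+}$-actions are ever used. One small correction to your final paragraph: the anti--cyclic sign is not what makes the descent to cyclic coinvariants work --- summing over all pairs $(s,t)$ makes the expression literally invariant under $a\mapsto Ta$ (in the biased picture $Ta\,\cyccirc{i}{j}\,b=a\,\cyccirc{i-1}{j}\,b$, so the terms are merely permuted and there is no sign to check) --- rather, anti--cyclicity is needed earlier, for the antisymmetry of $\scirct$ that underlies Proposition \ref{antibracketprop}.
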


\begin{ex}
The necklace Lie algebra of Bocklandt and Le Bruyn \cite{BLB,Schedler}
is an example of such a Lie algebra structure. Here the cyclic operad
structure is on the oriented cycles and the necklace words are
the cyclic invariants.
\end{ex}

\subsection{The bracket in the biased setting and compatibilities}
\label{circijsec}
Using the above description, we can relate the original
brackets to those arising in the operad setting. The obstruction
is that the two brackets lift to different spaces, but we can
use the operator $N$ which maps $\O(n)$ to $\O(n)^{C_{n+}}$ to make
the connection.

We first introduce the operations

\begin{equation}
a\cyccirc{i}{j}b= T^{1-i}a\circ_1 T^{-j}b
\end{equation}

Notice that $a\circ_ib=a\cyccirc{i}{0}b$ and $b\circ_j a=a\cyccirc{j}{0}b$.  The cyclic bracket $\cLiebracket$ in the unbiased setting is then given by $\cLiebracket= \sum_{i,j}\cyccirc{i}{j}$.

\begin{prop}  Let $\O$ be an anticyclic operad.  The map $N$ induces a map of Lie algebras from $\O^{\oplus}_C$ with
bracket $\cLiebracket $ to $\O^\oplus$ with bracket $\Liebracket$ via $[a]\mapsto N(a)$.
\end{prop}

\begin{proof}  Let $a\in \O(n)$ and $b\in \O(m)$.  One may calculate from the axioms that
\begin{equation*}
\frac{[[N(a)]\odot [N(b)]]}{(n+1)(m+1)}=\frac{[N(a),N(b)]}{n+m}
\end{equation*}
and hence that,
\begin{equation*}
N([[a]\odot [b]])=N(\frac{[[N(a)]\odot [N(b)]]}{(n+1)(m+1)}=N(\frac{[N(a),N(b)]}{n+m}) = [N(a),N(b)]
\end{equation*}
In particular the operadic bracket is closed on the subspace of cyclic invariants.
\end{proof}
A similar relationship holds between the fully symmetric bracket and fully symmetric cyclic bracket.  One way to keep track of the relationships between all of these Lie brackets is to know which objects represent the Maurer-Cartan functors, and to understand the relationship between the representing objects.  E.g. the arrow $\O^{\oplus}_C\to\O^\oplus$ is simply a manisfestation of the fact that every Frobenius algebra is in particular an associative algebra; see \cite{Ward2}.

\subsection{The odd Lie bracket and odd cyclic operads}

We can now adapt Gerstenhaber's construction to the cyclic operad setting.

\begin{prop}
If $\O$ is a cyclic operad then $\fr{s}\O$ is an anti--cyclic operad
with a Lie bracket $\cLiebracket$. This Lie bracket yields an odd Lie bracket
$\cGbracket$
on $\OSpp$ when using the degree $|\,.\,|$. More precisely
it is an odd Lie bracket on  the odd cyclic operad $\Sigma \fr{s}\O$.
\end{prop}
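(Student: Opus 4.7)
The plan is to assemble this proposition from three pieces already developed in the excerpt rather than to calculate anything afresh. First I would invoke the Lemma in \S\ref{cyclicsuspsec} asserting that the operadic suspension of a cyclic operad is anti--cyclic (and vice versa). Applied to a cyclic $\O$, this gives that $s\O$ is an anti--cyclic operad, so in particular it has well-defined cyclic gluing operations $\scirct$ satisfying the anti--symmetry condition $a\scirct b = -(-1)^{s(a)s(b)}\, b\,\leftsub{t}{\circ}_s a$ with respect to the natural grading $s$ of $s\O$.

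Next I would apply Proposition \ref{antibracketprop} and the subsequent theorem to the anti--cyclic operad $s\O$. This immediately produces a bilinear operation
\begin{equation}
[a\odot b]:=\sum_{s\in S,\, t\in T} a \scirct b
\end{equation}
on $s\O$ which descends to a genuine (even) Lie bracket $\cLiebracket$ on the coinvariants $(s\O)^{\oplus}_{\SS_+}=\bigoplus_n (s\O(n))_{\SS_{n+}}$. Since $s\O(n)=\Sigma^{n-1}(\O(n)\otimes sgn_n)$ agrees with $\O(n)$ as an underlying object (only the $\SS_{n+}$--action and the grading have been twisted), one has as a vector space the identification $(s\O)^{\oplus}_{\SS_+} \simeq \OSpp$, and so $\cLiebracket$ yields a Lie bracket on $\OSpp$ for the shifted degree $s$.

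Finally, to reinterpret this as an odd Lie bracket for the original degree $|\,.\,|$, I would apply the shift Lemma \ref{shiftlem}: a bilinear pairing is an odd Lie bracket on a graded vector space $V$ precisely when it is an (even) Lie bracket on $\Sigma V$. The degrees in Table \ref{degtable} record that $\Sigma s\O$ carries the natural grading $|a|=deg(a)+n$ while $s\O$ carries $s(a)=deg(a)+n-1$, so $\Sigma s\O$ is exactly the single na\"ive shift of $s\O$. Consequently the Lie bracket $\cLiebracket$ on $s\O^{\oplus}_{\SS_+}$ transports to an odd Lie bracket $\cGbracket$ on the odd cyclic operad $\Sigma s\O$, whose underlying coinvariants identify with $\OSpp$ as required.

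I do not expect a substantive obstacle, since each of the three ingredients (suspension exchanges cyclic and anti--cyclic, anti--cyclic operads carry a Lie bracket on coinvariants, and na\"ive shift converts Lie to odd Lie) has been established. The only point that requires minor care is bookkeeping: one must verify that the $\SS_{n+}$--coinvariants used for the cyclic bracket and the na\"ive desuspension $\Sigma$ commute with one another (which is immediate since $\Sigma$ is an endofunctor on graded $\SS_{n+}$--modules acting trivially on the group action), and that the sign twist by $sgn_{n+1}$ used in the definition of the cyclic suspension is compatible with the anti--symmetry sign in $\scirct$. This is the content of the Lemma in \S\ref{cyclicsuspsec} and needs no further verification here.
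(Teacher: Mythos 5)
Your proposal is correct and follows essentially the same route as the paper: the paper's proof likewise reduces everything to the Lemma that suspension exchanges cyclic and anti--cyclic, the Lie bracket on coinvariants of an anti--cyclic operad, and Lemma \ref{shiftlem} converting the Lie bracket for the grading $s$ into an odd Lie bracket for $|\,.\,|$ after the additional na\"ive shift. The paper merely states this more tersely ("the only thing to check is that the signs are correct"), so your more explicit bookkeeping adds nothing new but loses nothing either.
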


\begin{proof}The only thing to check is  that the signs are correct. This follows
from the fact that the degree of $a$ in  $\fr{s}\O$ is indeed $s(a)=|a|-1$. In particular $\cLiebracket$ is
a Lie bracket for the grading $s$ and hence after applying a shift, again by Lemma \ref{shiftlem},
it is odd Lie for the grading $|\,.\,|$ which is given by an additional na\"ive shift.
\end{proof}

\section{Dioperads, (Wheeled) PROPs and Properads.}\label{disec}
In this section we consider several further generalizations of operad structures. For an operad $\O$ it is natural to consider $\O(n)$
 as having $n$ inputs and one output. The first generalization is to include multiple inputs and outputs. The next
 generalization is to allow non-connected graphs.  Allowing both of them one arrives at PROPs, which were actually first historically
 \cite{McL,BV}. Restricting back to the connected graphs, one arrives at the notion of properads \cite{Val}.

 The next step, which will take us to the realm of mantra 2 is to allow self--gluings. This leads to the notions of
 wheeled PROPs and wheeled properads \cite{MMS}. Here it will become
 apparent that the odd gluing is essential. For the wheeled cases there is still a shift, which will allow us to make the gluings odd.
 This is intimately related to the fact that PROPs just like operads have distinct inputs and outputs.

 Finally, wheeled PROPs as they deal with non-connected graphs are the first instance where a multiplication for the BV operator and the Gerstenhaber bracket naturally appears.

\subsection{PROPs}
A unital PROP in the biased definition has an underlying sequence of objects
$\O(n,m)$ of $\CalC$ or say $\dgVect$ which carry an $\Sn\times \SS_m$
action. For this collection of bimodules to be a PROP, it has to have
the following additional structures.

\begin{enumerate}
\item Vertical compositions $\boxbar:\O(n,m)\otimes \O(m,k)\to \O(n,k)$ which
are equivariant
\item Horizontal compositions $\boxminus:\O(n,m)\otimes \O(k,l)\to \O(n+k,m+l)$
which are compatible in the sense that $(a\boxbar b)\boxminus (c\boxbar d)
=(a\boxminus c)\boxbar (b\boxminus d)$
\item Unit. $\unit \in \O(1,1)$, s.t. $(\unit \boxminus \dots \boxminus \unit) \boxbar a=a\boxbar(\unit \boxminus \dots \boxminus \unit)=a $
\end{enumerate}

The collection of objects $\O(n,m)$ together with the $\Sn\times \SS_m$ action is called an $\SS$--bimodule.

We define the compositions $a \ccirc{i}{j}b$ by  adding identities in all slots other than into the input slot $i$ of $a$ and the output slot
$j$ of $b$ and gluing $i$ and $j$ together.
These operations, gluing one input to one output, are called dioperadic operations  $\ccirc{i}{j}:\O(n,m)\otimes \O(k,l)\to \O(n+k-1,m+l-1)$.

\begin{df}
A dioperad is a collection of $\SS_n\times \SS_m$ modules $\O(n,m)$ with the operations $\ccirc{i}{j}:\O(n,m)\otimes \O(k,l)\to \O(n+k-1,m+l-1)$
that are symmetric group invariant and associative.

An nc--dioperad is a dioperad together with a horizontal multiplication $\boxminus:\O(n,m)\otimes \O(k,l)\to \O(n+k,m+l)$ which is compatible with all
the other structures, like in a PROP.
\end{df}

\begin{figure}
\centerline{\includegraphics[width=2in]{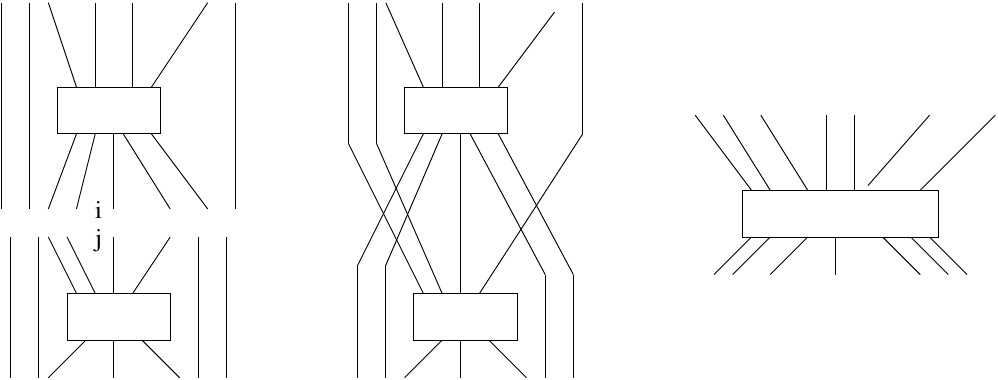}}
\caption{\label{diopfig}The dioperadic compositions}
\end{figure}

In the unbiased version one has a functor $\O$
from $\Fin\times \Fin$ to $\CalC$. Using the unit, one obtains
compositions $\ccirc{s}{t}:\O(U,S)\otimes \O(T,V)\to \O(U\amalg T\setminus \{t\}, V
\amalg (S\setminus \{s\})$.

\begin{ex}[Endomorphism PROP]
The canonical example is the endomorphism PROP $\Endo(V)(n,m)=Hom(V^{\otimes n},V^{\otimes m})$ with
the obvious $\Sn\times \SS_m$ action permuting the variables and functions together with the
obvious compositions.
\end{ex}

\begin{rmk}
\label{opproprmk}
Every PROP contains an operad given by the $\O(n,1)$ and the dioperadic operations $\ccirc{i}{1}=:\circ_i$.
\end{rmk}

\begin{ex}
\label{propgenopex}
[PROP generated by an operad]
An operad can be thought of as giving a sequence $\O(n,1)$.
Setting

\begin{equation}
\label{oppropeq}\O(n,m):=\bigoplus_{(n_1,\dots, n_m): \sum n_i=n} \O(n_1)\odo \O(n_m)\times_{\SS_{n_1}\times \dots\times \SS_{n_m}\times \SS_m}\Sn\times \SS_m
\end{equation}
The $\SS_m$ action permutes the factors and
the $\Sn$ action acts via the identification of the disjoint union of the sets $\{1,\dots,n_i\}$  with
the set $\{1,\dots,n\}$ by first
enumerating them one after another in the order given by $i$ that is via the representation induced by the inclusion  $\SS_{n_1}\times\dots\times \SS_{n_m}\to\SS_n$.
We obtain a PROP by defining $\boxminus$ to be essentially the identity, i.e.\ just tensoring together
the two factors followed by the inclusion of the summand.
This is a good example of a non--connected generalization
treated in \S\ref{NCpar}.
\end{ex}

\subsubsection{Properads}
Looking at the definition of a PROP one can see that the associativity implies
that there are compositions defined for any  oriented graph $\Gamma$, see \cite{Val,MV} for details.

Restricting to the situation where compositions are defined for
all {\em connected} oriented graphs
 one obtains the notion of a properad \cite{Val}. For instance
  the horizontal composition $\boxminus$ is dropped.

\subsubsection{Algebras}
An algebra over a PROP(erad) $\O$ is then a vector space $V$ together with a morphism of PROP(erad)s $\O\to \Endo(V)$

\subsubsection{Coinvariants} We let $\Op_\mathbb{S}$ be the sum over the coinvariants $\O(n,m)_{\Sn\times \SS_m}$.
\subsection{Poisson--Lie bracket}
Analogously to the structure of the Lie bracket for operads,
we can define for $a\in \O(n,m)$ and $b\in \O(k,l)$

\begin{equation}
a\circ b:=\sum_{i,j} a\ccirc{i}{j}b, \quad  [a\circ b]:=a\circ b-(-1)^{deg(a)deg(b)}b\circ a
\end{equation}

As before we let $\Op=\bigoplus_{n,m}\O(n,m)$ and $\Op_{\SS}:=\bigoplus_{n,m}\O(n,m)_{\Sn\times \SS_m}$.
In the case of a PROP, we also have a natural multiplicative structure
given by $\boxminus$.
\begin{thm} For a PROP(erad) or a dioperad $\O$ ,
the product above is Lie admissible on $\Op$, that is, the graded commutator induces a Lie bracket $\Liebracket$.
This Lie bracket descends to $\Op_{\SS}$.

For a PROP $\O$ or an nc--dioperad, the induced Lie bracket on  $\Op_{\SS}$ is Poisson w.r.t $\boxminus$.

The Lie bracket for an operad induces a Poisson bracket
on the  PROP generated by that operad coinciding with the natural Poisson
bracket above.
\end{thm}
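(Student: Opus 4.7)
The plan is to establish the four assertions in order. For the Lie admissibility of $\circ$ on a dioperad (or (w)PROP(erad)), I will compute the associator $(a\circ b)\circ c - a\circ(b\circ c)$ by expanding each side as a double sum of dioperadic gluings and partitioning the outer gluings in $(a\circ b)\circ c$ according to whether $c$ is attached to the $a$-vertex or to the $b$-vertex of $a\ccirc{i}{j}b$. The dioperadic associativity axioms, which assert that single-edge gluings at distinct vertices commute, imply that the terms of the second type are in bijection with, and equal to, the terms of $a\circ(b\circ c)$, so they cancel. What remains is a sum over three-vertex configurations in which $a$ is glued simultaneously to $b$ and to $c$ by one edge each; this residue is graded symmetric under $(b,c)\leftrightarrow(c,b)$ with Koszul sign $(-1)^{deg(b)deg(c)}$, which is precisely Lie admissibility. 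The Jacobi identity for $[a\circ b] := a\circ b - (-1)^{deg(a)deg(b)} b\circ a$ then follows formally.

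Descent of the bracket to $\Op_{\SS}$ is routine from $\SS$-equivariance of the single-edge gluings: the sum $\sum_{i,j}$ averages over input and output indices, so $[a\circ b]$ depends only on the $\SS_n\times\SS_m$-orbits of its arguments. For the Poisson property in the PROP or nc-dioperad case, I will expand $(a\boxminus b)\ccirc{i}{j}c$ and use the observation that a single edge must attach $c$ to exactly one of the two components of $a\boxminus b$. The PROP interchange law $(x\boxbar y)\boxminus(z\boxbar w)=(x\boxminus z)\boxbar(y\boxminus w)$ together with the unit $\unit$, which realizes $\ccirc{i}{j}$ as vertical composition against $\unit\boxminus\cdots\boxminus\unit$, makes the two cases collapse respectively to $(a\ccirc{i}{j}c)\boxminus b$ and to $\pm\, a\boxminus(b\ccirc{i}{j}c)$, where the sign is the Koszul sign for moving $c$ past $b$. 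Summing and antisymmetrizing yields the Leibniz rule for $[\;\circ\;]$ with respect to $\boxminus$, i.e.\ the Poisson identity.

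For the final statement, I will inspect the defining formula \eqref{oppropeq} for the PROP generated by an operad: by construction the dioperadic gluing $\ccirc{i}{j}$ there is an operadic $\circ_{i'}$-composition applied to the $j$-th tensor factor, combined with identities on the other factors and followed by inclusion of the summand. Hence $\sum_{i,j} a \ccirc{i}{j} b$ reduces to a sum of operadic brackets on one tensor factor, and the Poisson extension by $\boxminus$ produced in the previous paragraph reproduces the bracket constructed here. By Remark \ref{opproprmk} the restriction of this bracket to the slice $\O(n,1)$ recovers Gerstenhaber's bracket on the underlying operad.

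The main obstacle will be the sign and case bookkeeping in the Lie admissibility step: compared with the operadic argument of Gerstenhaber and with the cyclic computation in Proposition \ref{antibracketprop}, each of $a,b,c$ now carries both input and output indices, so the three-vertex configurations must be classified along two axes (which output of which element is attached to which input, and at which of the two existing vertices the outer gluing lands). Once this case analysis is carried out and the $\SS_n\times\SS_m$-equivariance of the gluings is properly tracked, the graded symmetry of the residual terms is transparent, and the remaining parts of the theorem are formal consequences of $\SS$-equivariance, the interchange axiom, and the explicit formula \eqref{oppropeq}.
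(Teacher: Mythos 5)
Your reduction of the associator is where the argument breaks. You partition the outer gluings of $(a\circ b)\circ c$ into those attaching $c$ to the $a$-vertex and those attaching $c$ to the $b$-vertex, and you claim the latter are in bijection with \emph{all} of $a\circ(b\circ c)$. They are not. In $a\circ(b\circ c)$ the output of $b\ccirc{k}{l}c$ that gets glued into an input of $a$ may be an output of $b$ \emph{or} an output of $c$: since $c$ has several outputs, there are terms in which one output of $c$ is glued into $b$ while a second output of $c$ is glued into $a$. These three-vertex configurations, with both edges emanating from $c$, never occur in $(a\circ b)\circ c$, so they survive in the associator with a minus sign. Consequently the residue is not just the family ``$b$ and $c$ each glued into $a$'' and is \emph{not} graded symmetric under $b\leftrightarrow c$; the composition is not pre-Lie, only Lie-admissible. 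This is precisely the caution in the paper's footnote: for operads, where every element has a single output, the extra family is empty and one does recover pre-Lie, which is presumably what misled you.

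The repair is the computation the paper actually invokes, namely the adaptation of the Jacobi calculation of Proposition \ref{antibracketprop} with $S,T,U$ partitioned into inputs and outputs and the sums restricted to in-to-out gluings: the associator decomposes as $P(a;b,c)-Q(a,b;c)$, where $P$ collects the configurations with $b$ and $c$ each glued into $a$ (graded symmetric in its last two arguments) and $Q$ collects those with $c$ glued into both $a$ and $b$ (graded symmetric in its first two). Neither symmetry alone kills the associator, but the full graded antisymmetrization over the symmetric group on three letters vanishes because each family cancels against its image under the appropriate transposition --- the same six cancelling terms as in the anti-cyclic proof. Your remaining steps --- descent to coinvariants by equivariance, the Leibniz rule via the decomposition of $(a\boxminus b)\circ c$ according to which tensor factor receives the edge (the paper's equation (\ref{nceq})), and the identification on the PROP generated by an operad via (\ref{oppropeq}) and Remark \ref{opproprmk} --- are in line with the paper's proof and go through once Lie-admissibility is established correctly.
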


\begin{proof}
The proof of the Lie--admissible structure and hence Jacobi identity can be adapted from proof of Proposition \ref{antibracketprop} for the anti--cyclic operad case.
For this we have to partition sets $S,T,U$ into in and outputs and restrict the sum only over the in--to--out gluings. The sign for switching the order is the same as using the anti--commutator and the same six terms that cancel appear.

To show the Poisson
property, we note that
\begin{equation}
\label{nceq}
a \circ (b \boxminus c)=(a\circ b)\boxminus c + (-1)^{deg(a)deg(b)} b\boxminus(a\circ c)
\end{equation}
up to symmetric group actions depending if an output of $a$ is glued to $b$ or $c$, and where the sign comes from the commutativity constraint in $\gVect$. The last statement follows by the definition of
the Poisson property and Example \ref{opproprmk}.
\end{proof}

 Adding a vertical composition formally to properads, by using not necessarily connected graphs, we end up back with PROPs.
For cyclic operads things are a bit more complicated, and we
have to first introduce the notion of non--connected cyclic operads.  This is done in \S\ref{NCpar}.

\subsection{Odd versions}
The odd versions of the concepts above can again be defined by using
shifts and suspensions.

\subsubsection{Suspension}
The {\em suspension} of an (nc)-dioperad or PROP(erad) $\O$ is defined to be $\fr{s}\O:=\O\tensor\Endo(\Sigma^{-1}k)$.  Explicitly the underlying $\SS$-bimodule
is
\begin{equation}
\fr{s}\O(n,m)=\Sigma^{n-m}\O(n,m)\otimes (sgn_n\otimes sgn_m)
\end{equation}

Just like for operads we have the following version of Proposition \ref{shiftprop}:

\begin{prop}\cite{MSS}
$V$ is an $\O$--algebra if and only if $V[1]$ is an $s\O$ algebra.
\end{prop}

\subsubsection{Na\"ive/output shift}
The notion of a `na\"ive' shift is no longer so na\"ive.  We can again take $\Endo$ as a guide.
Naively shifting it as an operad and then taking the PROP it generates we are led to the following definition.

Given an $\SS$-bimodule $\O$, we let $\sout\O$ be the bimodule
\begin{equation}
\sout\O(n,m)=\Sigma^{m}\O(n,m)\otimes sgn_m
\end{equation}
Just like in the case of operads (which is a subcase), one obtains slightly different signs in the associativity equations than one would
expect for the induced operations.

\begin{df}
An odd (nc)--dioperad or PROP(erad) is the na\"ive shift of the structure. That is $\O$ is an odd PROP(erad) if and only if $\sout^{-1} \O$ is a PROP.
\end{df}

\begin{ex}
An example of such an odd PROP(erad) is given by
\begin{equation}\O(n,m)=
\check V^{\otimes n}\otimes \Sigma^m   (V^{\otimes m} \otimes sgn_m)
\end{equation}
with the natural $\Sn\times \SS_m$ action.
The
vertical composition  given by the natural
pairing are given by the natural
pairing $\check V\otimes V\to k$
and the horizontal composition is induced by tensoring together the factors.
\end{ex}

We will also consider the suspension
given by $\sin\O(n,m)=\Sigma^n sgn_n \otimes \O(n,m)$. With this notation,
we see that $\fr{s}=\sin \sout^{-1}$.

With these notions in place, we can use the mantra (1) and using (3)
due to the existence of $\boxminus$ the resulting bracket is more over Gerstenhaber.

\begin{thm}\label{r3}
An odd (nc)-dioperad or PROP(erad)  $\O$ carries an odd Lie bracket on $\Op$ and $\Op_{\SS}$.
The odd Lie bracket is Gerstenhaber w.r.t.~$\boxminus$ for an odd nc--dioperad or PROP on $\Op_{\SS}$.
The odd Lie bracket on an odd operad induces an odd Lie bracket
on the odd PROP generated by that operad and it is a Gerstenhaber bracket there.
\end{thm}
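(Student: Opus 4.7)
The plan is to deduce the theorem from the even case (the previous theorem of the section) by transporting everything through the na\"ive output shift $\sout$. By definition, $\O$ is an odd (nc)--dioperad or PROP(erad) if and only if $\sout^{-1}\O$ is the corresponding even structure, so the previous theorem supplies a Lie--admissible product $\circ$ on $(\sout^{-1}\O)^{\oplus}$ whose antisymmetrization is a Lie bracket, descending to coinvariants. Under $\sout$ the grading on $\O(n,m)$ shifts by $m$, and Lemma \ref{shiftlem} converts the even Lie structure into an odd one for the natural grading $|\cdot|$ on $\Op$, giving $\Gbracket$ on $\Op$ and $\Op_{\SS}$. In the directed PROP(erad) case the underlying $\circ$ is only Lie--admissible, not pre--Lie, so one writes ``odd (pre-)Lie'' with the same meaning: the symmetrization of the associator vanishes only after antisymmetrization.

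First I would record the odd single--edge gluings $\bullet_{i,j}$ corresponding to $\ccirc{i}{j}$ under $\sout$. Explicitly $a \bullet_{i,j} b = (-1)^{\epsilon(i,j,a,b)} a \ccirc{i}{j} b$, where $\epsilon$ is dictated by commuting the $\Sigma^{m}$ past the tensor factors of $b$; these satisfy the odd analogue of the dioperadic associativity relations, formally the same kind of shift of (\ref{oddassoc}). Setting $a\bullet b = \sum_{i,j} a \bullet_{i,j} b$, the verification that
\begin{equation}
\{a\bullet b\}:= a\bullet b - (-1)^{s(a)s(b)} b\bullet a
\end{equation}
is odd Lie reproduces, term by term, the six--term cancellation of Proposition \ref{antibracketprop}, restricted to the in--to--out direction permitted by the orientation of flags. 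This is the main technical check; the hard part is not the algebra, which was done in the even case, but keeping track of signs, and this is precisely what working inside the $\K$--twisted/odd structure is designed to make transparent.

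For the Gerstenhaber property on an odd nc--dioperad or PROP, I would lift the compatibility (\ref{nceq}) to the odd level. Expanding $a \bullet (b \boxminus c)$ and splitting the sum over the outputs of $b\boxminus c$ into those landing in $b$ and those landing in $c$ gives, after passing to coinvariants, the graded Leibniz identity
\begin{equation}
a\bullet (b\boxminus c) = (a\bullet b)\boxminus c + (-1)^{s(a)s(b)} b \boxminus (a\bullet c).
\end{equation}
Antisymmetrizing yields $\{a\bullet (b\boxminus c)\} = \{a\bullet b\}\boxminus c + (-1)^{s(a)s(b)} b\boxminus \{a\bullet c\}$, which is exactly the odd Poisson/Gerstenhaber axiom for $\Gbracket$ with respect to the commutative multiplication $\boxminus$ on $\Op_{\SS}$. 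The only subtle input is that $\boxminus$ is (graded) commutative on coinvariants, which follows from $\Sn\times\SS_m$--equivariance exactly as for the even case.

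The final assertion follows by specializing the previous two parts to the odd PROP generated by an odd operad $\O$, which is defined by applying $\sout$ summandwise to (\ref{oppropeq}). Under the embedding of Remark \ref{opproprmk}, the dioperadic gluing $\bullet_{i,1}$ restricts on the operadic piece $\O(n,1)\supset \O(n)$ to the odd operadic $\bullet_i$, so the induced $\Gbracket$ restricts to the Gerstenhaber bracket of \S\ref{oddsec}. Because $\boxminus$ in Example \ref{propgenopex} is literally the inclusion of tensor summands, the Leibniz rule for $\bullet$ and $\boxminus$ holds tautologically, so $\Gbracket$ is Gerstenhaber on $\Op_{\SS}$.
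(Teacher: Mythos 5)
Your proposal is correct and follows essentially the same route as the paper: the paper's entire proof is the single observation that after the output shift $\sout$ the dioperadic gluings acquire degree one (the total shift drops from $n+m$ to $n+m-1$ outputs), with everything else delegated to the preceding even theorem and Lemma \ref{shiftlem} --- precisely the reduction you carry out in greater detail. The only blemish is the sign in your Leibniz identity, which should be $(-1)^{s(a)|b|}=(-1)^{(|a|-1)|b|}$ as in the paper's equation (\ref{dereq}) rather than $(-1)^{s(a)s(b)}$.
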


\begin{proof}
The only thing to check is that the effective shift for the dioperadic operations is indeed one.
This is the case, since before the dioperadic operation, the total shift is $n+m$ and after the
shift it is $n+m-1$.
\end{proof}

\subsection{Wheeled versions}
The dioperadic operations  and $\boxminus$ are not quite enough to recover the PROP structure. After one such operation, to get
to the operation $\boxbar$ one would have to do self--gluings of one input to an output.
This is precisely what is allowed in the wheeled version.

That is, in the unbiased version a wheeled PROP has the operations $\boxminus$, $\scirct$
 and self-gluing operations $\circ_{st}:\O(S,T)\to \O(S\setminus \{s\},T\setminus \{t\})$ which again satisfy
  natural equivariance, associativity and compatibility.  The compositions are defined for not necessarily connected oriented graphs with wheels.

Dropping the horizontal composition $\boxminus$ one obtains the notion of a wheeled properad.
The compositions are defined for  {\em connected} oriented graphs with wheels.
Notice that since $\boxbar$ can now be reduced to single self--gluings and dioperadic gluings,
the notion of wheeled dioperad would coincide with wheeled properad and that of wheeled nc--dioperad with that
of a wheeled PROP.

\begin{ex}
The PROP(erad) $\Endo_V(n,m)\simeq \check V^{\otimes n} \otimes V^{\otimes m}$ has such a
natural wheeling by simply contracting tensors for the self--gluings.
\end{ex}

\subsubsection{Wheeled odd PROP(erad)s}
\label{BVproppar}
The odd versions are described just as above. These are
by definition the images under the suspension $\sout$.
Again, we denote the image of the compositions $\ccirc{i}{j}$ and $\circ_{ij}$ by $\cbullet{i}{j}$ and $\bullet_{ij}$.

\begin{lem}
\label{oddselflem}
In an odd wheeled PROP(erad), we have $\bullet_{ij} \bullet_{kl}(a)=-\bullet_{k'l'} \bullet_{i'j'}(a)$, where $i',j',k',l'$ are the names of the
appropriately renumbered flags.
\end{lem}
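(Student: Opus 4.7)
The plan is to reduce the statement to the analogous (sign-free) identity in the underlying wheeled PROP(erad) and then carefully carry the signs through the shift $\sout$. By definition, an odd wheeled PROP(erad) $\O$ has the form $\sout\O'$ for some wheeled PROP(erad) $\O'$, meaning $\O(n,m) = \Sigma^m \O'(n,m)\otimes sgn_m$, and each odd self-gluing $\bullet_{ij}$ is induced from the ordinary self-gluing $\circ_{ij}$ on $\O'$ by extracting the $\Sigma$ corresponding to the output flag $j$ and the relevant factor of $sgn_m$.

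First I would verify the even analogue: for any $b\in\O'(n,m)$ and any two disjoint pairs of self-glueable flags,
\begin{equation}
\circ_{ij}\circ_{kl}\,b = \circ_{k'l'}\circ_{i'j'}\,b,
\end{equation}
where $i',j',k',l'$ are the flags after renumbering. This is a direct consequence of wheeled PROP(erad) associativity: both sides compute the composition of $b$ along the same graph, namely the single-vertex graph with two loops $\{i,j\}$ and $\{k,l\}$ attached, and the order in which the two loops are contracted is irrelevant.

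Next I would transfer this identity to $\O$ and track the signs. Each $\bullet$-operation extracts one factor of $\Sigma$ from $\Sigma^m$ (moving it past the preceding $\Sigma$'s, which yields a position-dependent sign) and reduces $sgn_m\to sgn_{m-1}$ by removing the basis vector at the position of the output flag (also producing a position-dependent sign). The crucial point is that in the two orderings $\bullet_{ij}\bullet_{kl}$ and $\bullet_{k'l'}\bullet_{i'j'}$ the \emph{same two} $\Sigma$'s are extracted and the \emph{same two} positions of $sgn_m$ are removed, only in opposite order. The signs coming from the sign representation are symmetric under this swap and cancel on both sides, while the two extracted $\Sigma$'s correspond to two odd operations and, by the Koszul sign rule, reordering them produces precisely the factor $-1$. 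Equivalently, $\bullet_{ij}$ has effective (Koszul) degree one as an operation on the shifted object, so the identity is an instance of the general rule that two odd operations anticommute whenever their even shadows commute.

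The main obstacle will be the careful sign bookkeeping: one must be sure that the contributions from the sign representation $sgn_m$ in the two different splittings $sgn_m\cong sgn_{m-2}\otimes sgn_1\otimes sgn_1$ (with the two $sgn_1$'s attached in opposite orders) really do match up, so that the only net contribution comes from the Koszul sign on the extracted suspensions. Once this is verified, the lemma follows immediately; it is exactly this anticommutation that underlies the fact that the operator $\Delta=\sum_{i,j}\bullet_{ij}$ of \S\ref{BVproppar} squares to zero.
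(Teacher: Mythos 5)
Your proposal is correct and follows essentially the same route as the paper: the paper's two-line proof likewise rests on the even identity holding in the unshifted wheeled PROP(erad) and attributes the single minus sign to the interchange of the two output flags $j$ and $l$ under the shift $\sout$, the inputs contributing nothing. The only point to watch in your bookkeeping is not to double-count: the factor $\Sigma^m\otimes sgn_m$ makes each output a single odd line, so transposing the two extracted output factors costs exactly one sign --- whether one attributes it to the Koszul rule for the suspensions or to the sign representation is a matter of convention, but not both.
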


\begin{proof}
This is due to the shift. Now if we interchange the order, we interchange outputs $j$ and $l$ resulting in a minus sign.
Since the inputs are unaltered, switching $i$ and $k$ gives no sign.
\end{proof}

This is the first time we encounter odd--self gluings, and we indeed
find the first occurrence of mantra (3).

\begin{thm}\label{r4}
For an odd wheeled PROP(erad) $\O$,
the operator $\Delta$ defined on each $\O(n,m)$  by
\begin{equation}
\Delta(a):=\sum_{ij} \bullet_{ij}(a)
\end{equation}
satisfies $\Delta^2=0$.

Moreover on the coinvariants for a PROP the operator $\Delta$ is a
BV operator on $\Op_{\SS}$ for the multiplication $\boxminus$ and
its associated bracket  (see Appendix)
is the Gerstenhaber bracket induced by $\Gbracket$.

\end{thm}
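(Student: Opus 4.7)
My strategy is to treat the three assertions in sequence: first $\Delta^2=0$, then the BV/Leibniz-deviation identity, and finally the identification of the induced bracket with $\Gbracket$.

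For $\Delta^2=0$, expand
\begin{equation*}
\Delta^2(a)=\sum_{(i,j),(k,l)}\bullet_{ij}\bullet_{kl}(a),
\end{equation*}
where the sum runs over ordered pairs of compatible self-gluings of $a$. The pairs $((i,j),(k,l))$ and $((k,l),(i,j))$ (with appropriate renaming $i',j',k',l'$ of the surviving flags as in Lemma~\ref{oddselflem}) give a fixed-point-free involution on the indexing set. By Lemma~\ref{oddselflem}, corresponding summands differ by a sign, so everything cancels in pairs.

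For the BV property, I would study the deviation
\begin{equation*}
D(a,b):=\Delta(a\boxminus b)-\Delta(a)\boxminus b-(-1)^{|a|}\,a\boxminus\Delta(b).
\end{equation*}
In $a\boxminus b$ the inputs (and outputs) are the disjoint unions of the inputs (outputs) of $a$ and $b$, so each odd self-gluing of $a\boxminus b$ falls into exactly one of four classes: (i) both flags in $a$, (ii) both flags in $b$, (iii) an input of $a$ paired with an output of $b$, or (iv) an input of $b$ paired with an output of $a$. Classes (i) and (ii), together with the sign introduced by sliding $\bullet_{ij}$ past the $\boxminus$-factor (governed by the commutativity constraint in $\gVect$ and recorded in (\ref{nceq})), reconstruct exactly $\Delta(a)\boxminus b$ and $(-1)^{|a|}a\boxminus \Delta(b)$. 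Classes (iii) and (iv) are genuinely new contributions: a self-gluing mixing $a$ and $b$ is precisely a dioperadic composition, and on $\Op_\SS$ these sum to $a\cbullet{i}{j}b$ and $b\cbullet{j}{i}a$ over all relevant flag indices. On the coinvariants, $\boxminus$ is commutative up to the Koszul sign, so these two classes combine into the symmetrized odd expression $\{a\bullet b\}$ built from $\Gbracket$.

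Combining $D(a,b)=\{a\bullet b\}$ with $\Delta^2=0$ and with the fact that $\boxminus$ is graded commutative on $\Op_\SS$, the general criterion recalled in the Appendix gives that $\Delta$ is a second-order, square-zero differential operator of degree $1$, hence a BV operator, and that the bracket it induces is $\Gbracket$. The main obstacle will be the sign bookkeeping in the second step: one must verify carefully that the Koszul signs coming from $s=\sin\sout^{-1}$, from the commutativity constraint used in the Leibniz-type identity (\ref{nceq}), and from the renumbering of flags after pulling a $\bullet_{ij}$ across $\boxminus$ conspire to match exactly the sign convention built into $\Gbracket$. Once this is checked on generators one extends by $\SS$-equivariance and descends to $\Op_\SS$, which is where graded commutativity of $\boxminus$ (and hence the BV identity as opposed to a mere ``up to homotopy'' version) is available.
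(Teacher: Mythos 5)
Your proposal is correct and follows essentially the same route as the paper's proof: $\Delta^2=0$ by pairing the two orderings of each pair of self-gluings via Lemma \ref{oddselflem}, and the BV identity by splitting $\Delta(a\boxminus b)$ into the four classes of gluings (within $a$, within $b$, and the two mixed dioperadic ones), with the remaining sign bookkeeping deferred to a direct computation exactly as in the paper.
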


\begin{proof}
The reason for the vanishing of $\Delta^2$ is Lemma \ref{oddselflem}. For the BV bracket we notice that $\Delta(a\boxminus b)$
splits into four sums depending on the gluing. The inputs of $a$ glued
to the outputs of $a$ gives $\Delta(a)b$, the inputs of $b$ to the outputs of $b$ gives the term $a\Delta(b)$, the outputs of $a$ to the inputs of $b$ and vice--versa gives $a\bullet b$ and $b\bullet a$ respectively --- all up to permutations.

 The only thing that remains to be checked
is that the signs work out which they do by a straightforward computation.
$\Delta$ has degree $1$ since
 each $\bullet_{ij}$ has degree $1$ after the shift.
Finally, the structures descend as we sum over all possible gluings.
\end{proof}

\begin{rmk}
Notice that there is no BV in the unshifted case. We need the odd composition
to get a differential.  This also shows that the Gerstenhaber bracket is actually the deeper one and the regular Lie bracket is actually a shift of the odd one rather than vice-versa.
\end{rmk}

\section{Modular operads, triples and twisting}
\label{triplepar}

We will now turn to the notion of modular operads. This is the first case where the odd
version is not given by a simple shift or suspension. It is rather a twist, namely what is know as  a $\K$--modular operad. For this we will need
to introduce triples. With hindsight, we will see that all the operad-like structures and their corresponding odd versions discussed above also arise from triples and twisted triples.

\subsection{Modular operads}
We will introduce modular operads in the unbiased setting.

A modular operad is a collection $\O(g,S)$
 bi--indexed by finite sets and the natural numbers, usually
taken with the condition that $2g+2-|S|>0$ together with gluing maps
\begin{equation}
\scirct: \O(g,S)\otimes \O(g',T)\to
\O(g+g',S\setminus \{s\}\amalg T\setminus \{t\})
\quad \forall s\in S, t\in T
\end{equation}
and self gluing maps
\begin{equation}
\circ_{ss'}:\O(g,S)\to \O(g+1,S\setminus \{s,s'\})\quad  \text{ for all distinct } s,s'\in S
\end{equation}
which are compatible associative and equivariant with respect to bijections.
The details of these conditions are straightforward, but tedious and
we refer to \cite{GK,MSS}. An alternative definition utilizing
triples is below.

\begin{ex}
The motivating example are the Deligne--Mumford compactifications $\bar M_{g,S}$
of curves of genus $g$ with $|S|$ punctures labeled by the set $S$.  A linear example is then given by the $H_*(\bar M_{g,n})$.
\end{ex}
For the biased version, just like in the cyclic case, one uses the sets  $\{0,1,\dots, |S|-1\}$ and  the notation
$\O((g,n)):=\O(g,n-1)$.

\subsection{The free-forget adjunction}
Before delving into the categorical depth of triples, we will consider a relevant example in the case of operads coming from the free-forget adjunction. Given an operad $\O$ we can forget the gluing maps and only retain the $\SS$-module. This gives a functor $G$ between the respective categories. The functor
$G$ has a left adjoint $F$, the free operad functor.
Explicitly, given an $\SS$--module $\V$, the free operad $F(\V)$ on $\V$
is constructed as follows. For a rooted tree $\tau$ one sets
\begin{equation}
\label{vtau}
\V(\tau)=\bigotimes_{v \text{ vertex of $\tau$}} \V(In(v))
\end{equation}
where $In(v)$ is the set of flags or half edges incoming at $v$.
Recall that in a rooted tree there is a natural orientation towards
the root and this defines the outgoing edge or flag at each vertex. All
other flags are incoming.

The composition $\circ_{\tau}$ is obtained by contracting all edges, that is for each edge we perform a $\circ_i$ operation
where $i$ is the input flag of the edge.

Rooted trees whose tails are labeled by a set $S$  form a category $\Iso\RT(S)$, by allowing isomorphisms of labeled rooted trees
as the only morphisms.
The free operad is then given by the $\SS$--module

\begin{equation}
F(\V)=\colim_{\Iso\RT(n)}\V=\bigoplus_{\tau\in \RT(n)}\V(\tau)/\sim \;\;=\bigoplus_{[\tau] \text{ iso classes}}\V(\tau)
\end{equation}
where $\sim$ is the equivalence under push--forward with respect to isomorphism.
The operad structure on the $F(\V)(S)$ is given summand by summand. If there are two summands indexed by $\t$ in $F(\V)(S)$  and $\t'$ in $F(\V)(T)$
under the composition $\circ_s$ their tensor product maps to the summand $\t\circ_{s}\t'$ which is the tree where $\t'$ is glued onto $\t$ at the leaf indexed by $s$.

\begin{table}
\begin{tabular}{l|l|l}
type&graphs for triple&local data at a vertex $v$\\
\hline
operad&rooted trees&in flags\\
non--$\Sigma$ operad&planar rooted trees&in flags\\
cyclic operad&trees&flags\\
non-$\Sigma$ cyclic operad&planar trees&flags\\
modular operad&stable graphs&(flags, $g(v)$)\\
PROP& nc directed graphs without wheels &(in flags, out flags)\\
properad&connected directed graphs without wheels&(in flags, out flags) \\
dioperad&directed trees&(in flags,out flags) \\
nc-dioperad&directed forests&(in flags, out flags) \\
wheeled PROP& nc directed graphs with wheels&(in flags, out flags)\\
wheeled properad&connected directed graphs with wheels&(in flags, out flags)\\
\end{tabular}

\caption{\label{optable}Types of operads and the graphs underlying their triples. nc stands for not necessarily connected}
\end{table}

\subsection{Operads and triples}
Let $\T=GF$ which is an endo--functor from $\SS$--modules to $\SS$--modules.
Since $F$ and $G$ are an adjoint pair, there are natural transformations: $\epsilon:FG\to id$ and $\eta: id \to GF$.
Vice--versa, one can prove that $F$ and $G$
  indeed form an adjoint pair using these natural transformations; see e.g.\ \cite{GelMan}.
In our particular case, the first is given by sending the summand of $\t$ to its image under the composition $\circ_{\tau}$. This
is well defined up to isomorphism because of the equivariance of the gluings.
 The second is just inclusion of the summand given by the $S$ labeled tree with one vertex.

\subsubsection{Triples}
Using these on $\T$ one gets the following natural transformations $\mu:\T\T\to\T$ via $G(FG)F\stackrel{\epsilon}{\rightarrow}GF$ and
$\eta: id\to \T$, making $\T$ a (unital associative) monoid.  In general {\em a triple} is an endo-functor $\T$ together with $\mu$ and $\eta$ which satisfies just these equations.  Our triple was constructed using an adjoint pair and it is a fact that all triples actually arise this way \cite{triple1,triple2}.

\subsubsection{Operads}
Now if $\O$ is an operad, we also get a map $\alpha:\T\O\to \O$ by sending each summand $\O(\tau)$ indexed by an $S$--labeled tree $\t$ to $\O(S)$ using $\circ_{\tau}$.
Due to the associativity these maps satisfy the module equations when considering the two possible ways to map $\T\T\O$ to $\O$.

Vice--versa, given an $\SS$--module $\V$ if we are given a morphism $\alpha:\T\V\to \V$, we have equivariant maps $\circ_{\t}$ and
moreover if they satisfy the module equations, then these $\circ_{\tau}$ decompose into
elementary maps $\circ_s$, where the
$\circ_s$ come from rooted  trees with exactly one internal edge.  It is straightforward to check that the $\circ_s$ define an operad structure on the $\V(S)$.

The natural transformation $\mu$ also has a nice tree interpretation. Let $\tau_0$ be the tree index of the first application of $\T$,
then in the next application one picks up a collection of indices $\tau_v$, one for each vertex $v$ of $\tau_0$. In order to show
the associativity, one can see that the corresponding summand of $\T\T\V$ is the same as $\V(\t_1)$ where $\t_1$ is obtained
from $\t_0$ by blowing up each vertex $v$ into the tree $\t_v$. Vice--versa, $\tau_0$ is obtained from $\tau_1$ by contracting the subtrees
$\tau_v$ to a vertex. One sometimes writes $\tau_1\to \tau_0$ since this is a morphism in the na\"ive category of graphs.

\subsubsection{Algebras over triples}
In general an algebra over a triple $\T$ is an object $\V$ of the underlying category together with a map $\alpha:\T\V\to \V$ such
that $\alpha,\mu$ and $\eta$ satisfy the axioms of a module over an algebra with a unit; see
\cite{MSS} for the precise technical details. From the above, we obtain:
\begin{prop}
Operads are precisely algebras over the triple $\T$ of rooted trees. \qed
\end{prop}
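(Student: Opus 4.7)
The plan is to establish the equivalence in two directions and then check that the constructions are mutually inverse. Much of the content has already been sketched in the surrounding paragraphs; the task is to organize it into a proof.

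First, I would show that any operad $\O$ gives rise to a $\T$-algebra. The structure map $\alpha\colon \T\O \to \O$ is defined summand by summand: on $\O(\tau) = \bigotimes_{v}\O(In(v))$, it is the iterated composition $\circ_{\tau}$ obtained by contracting the internal edges of $\tau$ via the operations $\circ_s$. Operadic associativity guarantees that $\circ_{\tau}$ does not depend on the order in which one contracts the edges, and $\SS$-equivariance guarantees that $\alpha$ is well defined on the colimit $\colim_{\Iso\RT(S)}\O$. To verify the unit axiom $\alpha\circ \eta = \mathrm{id}$, one observes that $\eta$ is the inclusion of the one-vertex corolla, whose associated composition is trivial. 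The associativity axiom $\alpha\circ\mu = \alpha\circ\T\alpha$ is the assertion that collapsing a nested tree $\tau_1\to \tau_0$ either by first composing each subtree $\tau_v$ and then composing along $\tau_0$, or by composing along $\tau_1$ directly, yields the same element; this is precisely iterated operadic associativity.

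Next, conversely, given a $\T$-algebra $(\V,\alpha)$, I would extract an operad structure. Each rooted tree $\tau_{s}$ with two vertices and a single internal edge of input label $s$ produces a summand $\V(\tau_s) = \V(S)\otimes \V(T)$ of $\T\V$, and the restriction of $\alpha$ to this summand defines an operation $\circ_s\colon \V(S)\otimes \V(T)\to \V((S\setminus\{s\})\amalg T')$. The unit axiom for $\alpha$ picks out an element $\mathrm{id}\in \V(\{*,\mathrm{out}\})$ serving as the operadic unit, and the $\SS$-equivariance of $\alpha$ in the unbiased setting gives equivariance of the $\circ_s$. The operadic associativity of the $\circ_s$ is the crucial point: two iterated compositions that should agree both factor through the $\alpha$-image of the same two-edge tree $\tau$, because by the associativity axiom $\alpha\circ\mu = \alpha\circ\T\alpha$, both orderings realize the unique composition along $\tau$.

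Finally, I would check that these two constructions are mutually inverse. Starting with an operad, extracting $\circ_s$'s from the associated $\alpha$, and reassembling recovers the original $\circ_s$ because $\tau_s$ is a single-edge tree and $\alpha|_{\V(\tau_s)} = \circ_s$ by construction. Starting with a $\T$-algebra, building the $\circ_s$'s and then inducing $\alpha'$ gives back $\alpha$ because, by the associativity axiom, the value of $\alpha$ on any summand $\V(\tau)$ is determined by iterated application of the single-edge operations $\circ_s$.

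The bookkeeping of nested trees under the multiplication $\mu\colon\T\T\to\T$ is the main potential obstacle: one must verify that the morphism $\tau_1\to \tau_0$ described in the paper really parametrizes the identification of $\T\T\V$-summands with $\T\V$-summands, and that the two ways of composing $\T\T\V \to \V$ coincide exactly with the operadic re-bracketings. All other verifications are routine. The same framework, with rooted trees replaced by the graph categories listed in Table \ref{optable}, will then serve as the template for all the other ``operad-like'' structures treated later in the paper.
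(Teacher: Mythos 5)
Your proposal is correct and takes essentially the same route as the paper, which records this proposition without a separate proof precisely because the surrounding discussion already contains both directions you spell out: an operad yields $\alpha$ via the compositions $\circ_{\tau}$ (well defined by associativity and equivariance), and conversely the module equations force any $\alpha$ to decompose into the elementary $\circ_s$ coming from one-internal-edge trees, with the $\mu$-compatibility encoding iterated associativity through the blow-up/contraction picture $\tau_1\to\tau_0$. The one small imprecision is your reading of the unit axiom: $\alpha\circ\eta=\mathrm{id}$ only says that $\alpha$ restricted to the one-vertex corolla summand is the identity, rather than producing an operadic unit element --- the paper is implicitly in the (non-unital) pseudo-operad setting here, so no unit element needs to be extracted.
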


\subsection{Other cases}

The method is now set to define all the other cases as algebras over a triple. We only have to specify the triple.
Taking the cue from above, we have to (1) fix the type of graph and the category of isomorphisms,
 (2) fix the value of $\V$ on each graph, i.e. the analogue of equation (\ref{vtau}); in all common examples this is local
 in the vertices, then (3)
 set $F(\V)=colim_{\Iso\G} \V$ where the colimit is taken over the category of isomorphisms of $S$--labeled graphs of the given type and
 (4) give $\mu$ via gluing
 the graphs together by inserting the graphs indexed by a vertex into that vertex.
Think of this as the blow--up which is  inverse to the operation of
 contracting the subgraph.

 For (1) we use Table \ref{optable} where we take the $S$--labeled version of the respective graphs.
 For (2) we use the general formula
 \begin{equation}
 \label{ogammaeq}
 \O(\Gamma)=\bigotimes_{v \text{ vertex of $\Gamma$}} \O(loc(v))
 \end{equation}
 where $loc(v)$ is the local set at $v$ given in Table \ref{optable},
 and for (4) we use the gluing together of flags; see appendix.

 Notice that in each of the examples the underlying objects are graphs of some sort. These form a na\"ive category of
 graphs, by allowing isomorphisms and contractions of edges, with the respective change of data.
 For modular operads for instance, when contracting a loop edge, one also has to increase the genus by one.

 \begin{prop}\cite{GeK1,GK,MSS,Val,Markl,MMS}
 The types of operads listed in Table \ref{optable} are precisely algebras over the respective triple defined above.
 \end{prop}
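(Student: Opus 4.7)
The plan is to run, uniformly across the rows of Table \ref{optable}, the same adjunction argument that was sketched in detail for ordinary operads, and then to check that the resulting module condition is equivalent to the axiomatic definition by restricting to one-edge graphs.

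First, for each row fix the class $\G$ of $S$-labelled graphs and the category $\Iso\G(S)$ whose morphisms are $S$-isomorphisms. Promote $\V$ to a functor on this category via (\ref{ogammaeq}) and define $F=F_\G$ on the underlying $\SS$-module (respectively $\SS$-bimodule, or $(g,S)$-collection) by $F(\V)(S)=\colim_{\Iso\G(S)}\V$. Let $G$ be the functor which remembers only the value on the corollas. I would exhibit $F\dashv G$: a morphism $F(\V)\to\O$ is the datum of compatible maps on each summand $\V(\Gamma)$, and by (\ref{ogammaeq}) together with the composition $\circ_\Gamma$ of $\O$ this is the same as a morphism $\V\to G(\O)$ of the underlying collections. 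This produces the triple $\T=GF$ whose unit $\eta$ is the inclusion of the one-vertex corolla and whose multiplication $\mu$ is the ``blow up'' map that replaces each vertex $v$ of the outer graph by the inner graph $\tau_v$ indexing it, inverting edge contraction.

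Next, given a structure $\O$ of the prescribed type, I would define $\alpha:\T\O\to\O$ on the summand $\O(\Gamma)$ as the composition $\circ_\Gamma$ built by iterated one-edge gluings: $\scirct$, together with self-gluings $\circ_{ss'}$ in the wheeled and modular rows, and $\boxminus$ in the not-necessarily-connected rows. The module axioms $\alpha\circ\mu=\alpha\circ \T\alpha$ and $\alpha\circ\eta=\mathrm{id}$ follow from the associativity, equivariance and unitality of the structure maps, which are exactly what guarantees that $\circ_\Gamma$ is well-defined independent of an order of contraction. Conversely, given an algebra $\alpha:\T\V\to\V$, the restriction of $\alpha$ to summands indexed by graphs with a single internal edge produces the elementary operations. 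Equivariance is automatic because the colimit over $\Iso\G(S)$ already quotients by graph isomorphism, and unitality falls out of $\alpha\circ\eta=\mathrm{id}$.

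The main obstacle is the case-by-case verification that the module relation $\alpha\circ\mu=\alpha\circ\T\alpha$ on graphs with exactly \emph{two} internal edges coincides with the axiom list of each species. Concretely one must check that the two canonical ways of contracting a two-edge graph reproduce: the three-branch operadic associativity (\ref{oddassoc}) and its analogue for $\scirct$, the $T$-equivariance of $\scirct$ in the cyclic and non-$\Sigma$ cyclic rows, the interchange law $(a\boxbar b)\boxminus(c\boxbar d)=(a\boxminus c)\boxbar(b\boxminus d)$ for PROPs (which is what forces the class $\G$ to include disconnected graphs and which distinguishes the nc rows from their connected counterparts), the commutativity of distinct self-gluings in the wheeled rows, and the genus increment in the modular row. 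For the non-$\Sigma$ rows one replaces $\Iso\G$ by its planar refinement, so that the colimit collapses to a direct sum with trivial automorphism quotient and the axiomatic absence of symmetric group actions is matched. Once these two-edge checks are tabulated---each one a short direct inspection---the proposition follows uniformly from the adjunction picture. The references \cite{MSS,Markl,MMS,Val,GeK1,GK} handle several of these tables already, so the remaining work is essentially to align conventions and to supply the checks for the rows (nc-dioperads, wheeled PROP(erad)s) not explicitly tabulated in those sources.
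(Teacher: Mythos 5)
Your proposal is correct and follows essentially the same route as the paper: the paper proves the operad case in detail via the free--forgetful adjunction and then gives exactly your recipe (graph class from Table \ref{optable}, local value (\ref{ogammaeq}), colimit over $\Iso\G$, blow-up multiplication) for the remaining rows, deferring the case-by-case two-edge verifications to the cited references \cite{MSS,Markl,MMS,Val,GeK1,GK}. Your write-up simply makes explicit the checks the paper leaves to those sources, so there is nothing to object to.
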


We will make this explicit for modular operads. Here the graphs are stable  $S$--labeled graphs, which means that they are arbitrary graphs together with a labeling by $S$ of the tails and a genus function $g$ from the vertices of the given graph $\Gamma$ to $\N$,
such that $2g(\Gamma)-2-|S|>2$ where $g(\Gamma)=\sum_{\text{vertices } v} g(v)+dim H^1(\Gamma)$ is the total genus of the graph.
The basic gluings  $\scirct$ come from trees with one edge where $s$ and $t$ are the flags of the unique edge and the gluings
$\circ_{ss'}$ come from the one vertex graph with one loop whose flags are indexed by $s$ and $s'$.

For various gradings the following formula is useful for an $S$--labeled $\Gamma$
\begin{equation}
\sum_{v}(|Flags(v)|-2+g(v))=2g(\Gamma)-2+|S|
\end{equation}

\subsection{Twisted modular operads}
The idea will be to get new notions of operad-like structures by twisting the triple $\T$.  Let us first recall how this is done when $\T$ is the triple encoding modular operads, following\cite{GK}.  In order to do this one first defines
\begin{equation*}
\V_{\D}(\Gamma):=\V(\Gamma)\otimes \D(\Gamma)
\end{equation*}
for a suitible datum $\D$, and then,
\begin{eqnarray}
\T_{\D}\V(g,S)&:=&\colim_{\Gamma\in \Iso\G_{mod}(S)}\V_\D
\nn \simeq  \bigoplus_{\Gamma\in \Iso\G_{mod}(S)}\V(\Gamma)\otimes\D(\Gamma)/\sim\nn   \\
&\simeq& \bigoplus_{[\Gamma]}(\V(\Gamma)\otimes \D(\Gamma))_{Aut(\Gamma)}
\end{eqnarray}
where here $\G_{mod}(S)$ are $S$--labeled stable graphs with a genus function and the last sum is over isomorphism classes of such graphs.
Taking coinvariants with respect to the automorphism group is new, since the automorphism groups of rooted $S$--labeled trees
are trivial.

In order for this to work $\D$ has to be what is called a hyper--operad in \cite{GK}. The relevant problem being that if we do the inverse of
contracting edges along subgraphs ---so as to build the composition along a graph--- we have to know how $\D$ behaves. So let $\Gamma_1$ be a
stable  graph and $\Gamma_0$ a graph obtained from $\Gamma$ by contracting subtrees $\Gamma_v$, where $v$ runs
through the vertices of $\Gamma_0$ and $\Gamma_v$ is the preimage of $v$ under the contraction. This is also what is needed
to define the transformation $\T_{\D}\T_{\D}\to \T_{\D}$.

The datum of $\D$ is given by specifying all the $\D(\Gamma)$ and maps
\begin{equation}
\D(\Gamma_0)\otimes \bigotimes_{v {\text{ vertices of $\G$}}}\D(\Gamma_v)\to \D(\Gamma_1)
\end{equation}
for each morphism $\Gamma_1\to \Gamma_0$
which again have to satisfy some natural associativity, see \cite{MSS,GK}.  One also fixes that $\D(*_{g,S})=k$, where $*_{g,S}$
is the graph with one vertex of genus $g$ and $S$ tails. These are necessary to show that the twisted objects are again triples with unit.
Notice that there might be no contractions of edges in $\Gamma_1\to \Gamma_0$. For this  subcase we have that $\D$ is compatible
with the $\Sn$ action.

\subsubsection{Compositions in twisted modular operads}
A good way to understand twisted modular operads is as follows. For a modular operad the algebra over a triple picture says
that for each $S$--labeled graph  $\Gamma$ with total genus $g$ there is a unique operation $\circ_{\Gamma}$ from $\O(\Gamma)\to \O(g,S)$. Now for a twisted modular operad this ceases to be the case. One actually has to specify more information on the graph.
One way to phrase this is that $\D(\Gamma)$ is a vector space of operations for each graph $\Gamma$ and we get a well defined
operation when we specify an element of that vector space. Of course basis elements suffice.
To make this precise, we use adjointness of $\tensor$ or in other words the fact that the category is closed monoidal.

\begin{lem}
\label{twistedcomplem}
Being an algebra over a $\D$ twisted triple in a closed monoidal category
 is equivalent to having equivariant, compatible composition maps
\begin{equation}
\circ_{ord,\Gamma}:\O(\Gamma)\to\O(g,S)
\end{equation}
for each $S$--labeled $\Gamma$ of total genus $g$ and each element $ord\in \D(\Gamma)$.
\end{lem}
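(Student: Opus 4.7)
The plan is to unpack the algebra-over-a-triple structure and apply the tensor-hom adjunction vertex by vertex (really, graph by graph). Being a $\T_\D$-algebra means having a structure map $\alpha: \T_\D\O \to \O$ satisfying the module axioms with respect to $\mu$ and $\eta$. Using the explicit description
\[
\T_\D \O(g,S) \;\simeq\; \bigoplus_{[\Gamma]} \bigl(\O(\Gamma) \otimes \D(\Gamma)\bigr)_{\mathrm{Aut}(\Gamma)},
\]
the map $\alpha$ restricts on each summand to an $\mathrm{Aut}(\Gamma)$-equivariant map $\O(\Gamma) \otimes \D(\Gamma) \to \O(g,S)$. Conversely, a family of such equivariant maps assembles (by descent to coinvariants and summation over isomorphism classes) into a well-defined map $\alpha$.

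Next, I would invoke closedness of $\CalC$: for every pair of objects $X,Y$ there is an internal hom $\underline{\mathrm{Hom}}(X,Y)$ with a natural bijection $\mathrm{Hom}(Z\otimes X, Y) \cong \mathrm{Hom}(Z, \underline{\mathrm{Hom}}(X,Y))$. Applying this to $X=\O(\Gamma)$, $Y=\O(g,S)$ and $Z=\D(\Gamma)$ converts the equivariant map $\O(\Gamma)\otimes \D(\Gamma)\to \O(g,S)$ into an equivariant map $\D(\Gamma)\to \underline{\mathrm{Hom}}(\O(\Gamma),\O(g,S))$. An ``element'' $ord \in \D(\Gamma)$ (interpreted via points or generalized elements in $\CalC$) then picks out the desired composition $\circ_{ord,\Gamma}:\O(\Gamma)\to \O(g,S)$, and equivariance under $\mathrm{Aut}(\Gamma)$ is built in.

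The last ---and main--- step is to match the axioms. I would translate the unit axiom $\alpha \circ \eta_\O = \mathrm{id}$ into the statement that for the trivial graph $*_{g,S}$ the unique element $1 \in \D(*_{g,S}) = k$ produces $\circ_{1,*_{g,S}} = \mathrm{id}_{\O(g,S)}$, using that $\D(*_{g,S})=k$ by the hyperoperad conventions. For the associativity axiom $\alpha \circ \T_\D \alpha = \alpha \circ \mu$, I would fix a morphism of graphs $\Gamma_1 \to \Gamma_0$ realized by contracting subgraphs $\Gamma_v$ at each vertex $v$ of $\Gamma_0$, and trace both sides: on $\T_\D\T_\D\O$ the map $\mu$ is built from the hyperoperad structure $\D(\Gamma_0)\otimes \bigotimes_v \D(\Gamma_v) \to \D(\Gamma_1)$ and the identification $\O(\Gamma_1) \simeq \bigotimes_v \O(\Gamma_v)$. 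Chasing through the adjunction yields the compatibility
\[
\circ_{ord_0,\Gamma_0} \circ \Bigl(\bigotimes_v \circ_{ord_v,\Gamma_v}\Bigr) \;=\; \circ_{\mu(ord_0;\{ord_v\}),\,\Gamma_1},
\]
for all choices of $ord_0,ord_v$, which is exactly the required compatibility of the family $\{\circ_{ord,\Gamma}\}$ with blow-ups and with the $\D$-composition.

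The main obstacle I anticipate is bookkeeping: verifying that the equivariance of $\alpha$ on coinvariants, the $\mathrm{Aut}(\Gamma)$-action on $\D(\Gamma)$ via the hyperoperad, and the compatibility of $\D$ with graph morphisms \emph{without} edge contractions all conspire to make the two characterizations genuinely equivalent (rather than one direction being merely sufficient). This is largely a formal diagram chase, but the reader should be warned that one must consistently choose representatives of isomorphism classes or work in the unbiased setting from the start --- the content is in the closed-monoidal adjunction combined with the colimit description of $\T_\D\O$, and no further axioms are needed.
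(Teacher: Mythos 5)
Your proposal is correct and takes essentially the same route as the paper: the entire content of the lemma is the tensor--hom adjunction $Hom(\D(\Gamma)\otimes\O(\Gamma),\O(g,S))\simeq Hom(\D(\Gamma),Hom(\O(\Gamma),\O(g,S)))$ applied to the graph-by-graph components of the structure map, with the unit and associativity axioms absorbed into the word ``compatible.'' The paper's proof is just this adjunction stated in one line; your additional spelling-out of how the axioms translate is consistent with, but not required beyond, what the paper records.
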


\begin{proof}
The triple gives compatible compositions maps
$
\phi:\D(\Gamma)\otimes \O(\Gamma)\to \O(g,S)
$
that is $$\phi\in Hom(\D(\Gamma)\otimes \O(\Gamma),\O(g,S))\simeq Hom(\D(\Gamma), Hom(\O(\Gamma),\O(g,S) ))$$
In other words if $ord\in \D(\Gamma)$ then we get a composition $\circ_{ord,\Gamma}:\O(\Gamma)\to \O(g,S)$ and the collection
of these compositions is equivalent to $\phi$.
\end{proof}

\subsubsection{Coboundaries} A special type of twist is given by a functor from the one vertex graphs to
invertible elements in the target category. In the main application, this means a one--dimensional vector space
in some degree. That is a collection of $\mathfrak{l}(*_v)$ for each possible vertex type functorial under automorphisms;
in the modular case the vertex types are given by $(g,S)$ and the automorphisms are $\SS_{|S|}$.

If $\Gamma$ has total genus $g$ and tails $S$, then
$$\mathfrak{D}_{\mathfrak{l}}(\Gamma)=\mathfrak{l}(g,S)\otimes
\bigotimes_{v\in \Gamma} \mathfrak{l}((g(v),Flag(v))^{-1}$$

The most common coboundaries are listed in Table \ref{cobtable}.
\begin{table}
\begin{tabular}{lll}
Name&Value on $*_{((g,n))}$&appears in\\
\hline
$\mathfrak{s}$&$\Sigma^{-2(g-1)-n}sgn_n$&operadic suspension\\
$\tilde{\mathfrak s}$&$\Sigma^{-n}sgn_n$&shifts of $\cEndo$\\
$\Sigma$&$\Sigma k$&na\"ive shift\\
\end{tabular}
\caption{\label{cobtable}List of coboundary twists and their natural habitats  Here $n$ refers to  the standard notation
${\mathcal O}((n))={\mathcal O}(n-1)$ with $\SS_{(n-1)+}\simeq \SS_n$ action in the cyclic/modular case. }
\end{table}

Coboundaries behave nicely with respect to conjugation:
if $\mathfrak{l}$ is the functor of tensoring with $\mathfrak{l}$
then
\begin{equation}
\mathfrak{l}\circ {\mathbb T}_{\mathfrak D}\circ \mathfrak{l}^{-1}
\simeq \mathbb T_{\mathfrak D{\mathfrak D}_{\mathfrak l}}
\end{equation}
where we shall write $T_{\mathfrak D{\mathfrak D}_{\mathfrak l}}$ for $T_{\mathfrak D \tensor {\mathfrak D}_{\mathfrak l}}$.  This equation also proves
\begin{prop}
\label{twistprop}
The categories of algebras over the triple $\T_{\D}$ and algebras over the triple $\T_{\D\D_{\mathfrak l}}$
are equivalent, with the equivalence given by tensoring with $\mathfrak{l}$.
\end{prop}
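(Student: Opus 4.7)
The plan is to upgrade the conjugation display
$$\mathfrak{l}\circ {\mathbb T}_{\mathfrak D}\circ \mathfrak{l}^{-1}\simeq \mathbb T_{\mathfrak D{\mathfrak D}_{\mathfrak l}}$$
sitting just above the proposition from an isomorphism of endofunctors to an isomorphism of triples, and then transport algebras along it. Write $L_{\mathfrak l}$ for the endofunctor of tensoring an $\SS$--module (or whatever the ground category of the triple is in the given example) with $\mathfrak l$. Since $\mathfrak l(g,S)$ is an invertible object for every vertex type, $L_{\mathfrak l}$ is an auto--equivalence with quasi--inverse $L_{\mathfrak{l}^{-1}}$; this is the candidate equivalence.

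The first step is to verify that the displayed isomorphism is natural in $\V$, which is essentially immediate once one unpacks both sides summand by summand using equation (\ref{ogammaeq}): a summand $\V(\Gamma)\otimes \D(\Gamma)$ of $\T_{\D}(L_{\mathfrak l^{-1}}\V)$ is, after putting back the outer $\mathfrak l$, exactly a summand $\V(\Gamma)\otimes \D(\Gamma)\otimes \mathfrak l(g,S)\otimes \bigotimes_v\mathfrak l(g(v),\mathrm{Flag}(v))^{-1}=\V(\Gamma)\otimes\D(\Gamma)\otimes\D_{\mathfrak l}(\Gamma)$ of $\T_{\D\D_{\mathfrak l}}\V$, because the local factors $\mathfrak l(g(v),\mathrm{Flag}(v))^{-1}$ can be absorbed into $\V(\Gamma)$ vertex by vertex. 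Crucially the isomorphism is compatible with automorphisms of $\Gamma$, so it descends to the coinvariants that define $F$.

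The second and main step is to promote this to an isomorphism of triples, i.e.\ to check compatibility with the unit $\eta$ and the multiplication $\mu$. For $\eta$ this is essentially trivial since the one--vertex graph $*_{g,S}$ satisfies $\mathfrak D_{\mathfrak l}(*_{g,S})=\mathfrak l(g,S)\otimes\mathfrak l(g,S)^{-1}=k$, so the unit summands match on the nose. For $\mu$, which corresponds to blowing up a vertex $v$ of $\Gamma_0$ into a subgraph $\Gamma_v$ to produce $\Gamma_1$, one has to track how the local $\mathfrak l$ factors telescope: the outer $\mathfrak l(g,S)$ on the outside of $\T_{\D}\T_{\D}$, the inner $\mathfrak l(g(v),\mathrm{Flag}(v))^{-1}$ used in the nested $\T_{\D}$, and the corresponding factors produced at the vertices of each $\Gamma_v$, all recombine exactly into $\mathfrak D_{\mathfrak l}(\Gamma_1)$. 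This is the cocycle property of a coboundary and is where all the work lies; it is the one obstacle that needs genuine checking, and it is parallel to the coboundary calculations in \cite{GK}.

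With the isomorphism of triples in hand, the equivalence of algebra categories is formal. Given $\alpha:\T_{\D}\V\to\V$ define the algebra structure on $L_{\mathfrak l}\V$ as the composite
\begin{equation*}
\T_{\D\D_{\mathfrak l}}(L_{\mathfrak l}\V)\;\xrightarrow{\;\cong\;}\;L_{\mathfrak l}\T_{\D}L_{\mathfrak l^{-1}}(L_{\mathfrak l}\V)\;\xrightarrow{\;\cong\;}\;L_{\mathfrak l}\T_{\D}\V\;\xrightarrow{\;L_{\mathfrak l}\alpha\;}\;L_{\mathfrak l}\V,
\end{equation*}
and use $L_{\mathfrak l^{-1}}$ to go the other way. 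The two compositions are mutually inverse natural transformations, and the module axioms for the new structure reduce to those for $\alpha$ via the triple--isomorphism verified in the previous step. This gives the desired equivalence of categories and completes the plan.
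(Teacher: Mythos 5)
Your proposal is correct and follows the same route as the paper: the paper simply asserts that the conjugation identity $\mathfrak{l}\circ\T_{\D}\circ\mathfrak{l}^{-1}\simeq\T_{\D\D_{\mathfrak l}}$ "also proves" the proposition and leaves it at that, while you fill in exactly the details that this shorthand presupposes (naturality, compatibility with $\eta$ and $\mu$, and the formal transport of algebra structures). The summand-by-summand identification via $\D_{\mathfrak l}(\Gamma)=\mathfrak l(g,S)\otimes\bigotimes_v\mathfrak l(g(v),\mathrm{Flag}(v))^{-1}$ and the telescoping check for $\mu$ are precisely the content the paper's one-line argument relies on.
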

This is the underlying reason for the form of our definition of odd operads and PROP(erad)s; see \S\ref{oddkpar}.

\begin{rmk}
\label{twistrmk}
It is important to notice that although $\mathfrak l$ determines $\D_{\mathfrak l}$, it can happen that different $\mathfrak{l}$ give
rise to the same twist $\D$. For instance $\D_{\s^2}\simeq \unit$\cite{GK}.
\end{rmk}

\subsection{The twist $\fr{K}$}  Given a (graded) finite dimensional vector space $V$ or an edge $e$ composed of two flags $s$ and $t$ let ${\text Det}(V)=\Sigma^{-dim(V)}\Lambda^{dim(V)}V$.  The most important feature about $Det$ is:
\begin{equation*}
\label{deteq}
Det(\bigoplus V_i)=\bigotimes_i Det(V_i)
\end{equation*}
We then define the hyper-operad (aka twist)
\begin{equation*}
\mathfrak{K}(\Gamma)=\text{Det}(\text{Edge}(\Gamma))
\end{equation*}

Algebras over the associated twisted triple will be called $\K$-modular operads, and are the correct odd version of modular operads. They
 turn up naturally in two situations. The first is as the Feynman transform of a modular
operad, (see \S \ref{feynmanpar}) and the second is on the chain and homology level of modular operads
with twist gluing or a degree one gluing; see \S\ref{toppar}.

\begin{lem}\label{ksqrmk}
$\K^{\otimes 2}=\D_{\mathfrak s}\D^{-1}_{\tilde{\mathfrak s}}$ and hence twists by $\K$ and $\K^{-1}$ are equivalent.  Explicitly, $\K^{\otimes 2}(\Gamma)=\Sigma^{-2|E(\Gamma)|}$.  In particular if we are only looking at the $\Z/2\Z$ degree then $\K=\K^{-1}$
\end{lem}

\subsubsection{Odd edge interpretation of $\K$}
The interpretation  which explains why $\K$-modular operads are the odd version of modular operads
is that in a $\K$-modular operad each edge gets weight $-1$ and so permutations
of the edges give rise to signs. Also permuting the vertices of an edge, gives the shifted sign.
These are exactly the Gerstenhaber signs as we discuss below \S\ref{oddkpar}.

\begin{rmk}  In the context of modular operads there are several additional interesting twists which we will not discuss at length here.  For example in considering the extension of the operad $\Endo(V)$ to the modular case.
one requires $V$ to have a non-degenerate form.  If the form is of degree $l$ and symmetric or anti-symmetric
the resulting operad structure is a twisted modular operad where the twists are; (see e.g.~\cite{Ba}):

\begin{eqnarray*}
{\mathfrak K}^{\otimes l}&\text{if the form is symmetric of degree $l$}\nn\\
{\mathfrak K}^{\otimes l}\D_\fr{s}&\text{if the form is anti-symmetric of degree $l$}
\end{eqnarray*}
These operads are then the natural receptacle in the formulation of an algebra over an operad.
\end{rmk}

\subsubsection{Tensor products}
\begin{lem}\label{tensorlem}
If $\O$ is a $\D$ twisted modular operad and $\O'$ is a $\D'$  twisted
modular operad then $(\O\otimes \O')((g,n)):=\O((g,n))\otimes \O'((g,n))$
is a $\D\D'$ twisted modular operad. \qed
\end{lem}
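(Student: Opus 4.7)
The plan is to use Lemma \ref{twistedcomplem} to recast the problem: a $\D\D'$--twisted modular operad structure on $\O\otimes\O'$ amounts to giving, for each $S$--labeled stable graph $\Gamma$ of total genus $g$ and each element of $(\D\D')(\Gamma)$, an equivariant composition $(\O\otimes\O')(\Gamma)\to (\O\otimes\O')(g,S)$, compatible with subgraph contractions. First I would set $(\D\D')(\Gamma):=\D(\Gamma)\otimes \D'(\Gamma)$ and endow this assignment with hyperoperad structure maps obtained by tensoring those of $\D$ and $\D'$; verifying the hyperoperad axioms for $\D\D'$ is immediate from the fact that $\otimes$ is symmetric monoidal in $\gVect$, and the unit condition $\D\D'(\ast_{g,S})=k\otimes k\simeq k$ is automatic.

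The key observation is the natural shuffle isomorphism
\[
(\O\otimes\O')(\Gamma)=\bigotimes_{v\in V(\Gamma)}\bigl(\O(loc(v))\otimes\O'(loc(v))\bigr)\;\cong\;\O(\Gamma)\otimes\O'(\Gamma),
\]
obtained from the symmetry constraint of $\gVect$. Given $ord\otimes ord'\in (\D\D')(\Gamma)$, I would define the corresponding composition for $\O\otimes \O'$ as the composite
\[
(\O\otimes\O')(\Gamma)\;\cong\;\O(\Gamma)\otimes\O'(\Gamma)\;\xrightarrow{\circ_{ord,\Gamma}\otimes\,\circ_{ord',\Gamma}}\;\O(g,S)\otimes\O'(g,S)\;=\;(\O\otimes\O')(g,S),
\]
and extend by bilinearity to all of $(\D\D')(\Gamma)$. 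Equivariance under bijections of $S$ and isomorphisms of $\Gamma$ then follows from the equivariance of the constituent compositions for $\O$ and $\O'$ together with the naturality of the shuffle isomorphism.

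The main step that remains is the associativity/compatibility with respect to a contraction $\Gamma_1\to\Gamma_0$ of subgraphs $\Gamma_v\subset\Gamma_1$, i.e.\ the commutativity of the diagram expressing $\circ^{\O\otimes\O'}_{\Gamma_0}\circ\bigl(\bigotimes_v\circ^{\O\otimes\O'}_{\Gamma_v}\bigr)=\circ^{\O\otimes\O'}_{\Gamma_1}$, twisted by the hyperoperad structure map of $\D\D'$. Here the expected obstacle is bookkeeping of the Koszul signs produced by the shuffle isomorphism: the signs arising from reordering tensor factors vertex-by-vertex must agree on the two routes around the associativity square. I expect this to reduce, via the coherence theorem for symmetric monoidal categories, to the statement that the hyperoperad composition of $\D\D'$ is defined purely vertex-wise and therefore matches the vertex-wise decomposition of $(\O\otimes\O')(\Gamma_1)$ used in the shuffle. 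With that in place, the associativity for $\O\otimes\O'$ follows from the separate associativity already assumed for $\O$ and $\O'$ by tensoring the two commutative squares and conjugating by the shuffle isomorphisms.
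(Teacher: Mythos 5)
Your proposal is correct, and it supplies in full the argument that the paper omits entirely (the lemma is stated with an immediate \qed, i.e.\ treated as evident). The route you take --- tensoring the twisting cocycles vertex-wise, identifying $(\O\otimes\O')(\Gamma)\cong\O(\Gamma)\otimes\O'(\Gamma)$ via the shuffle isomorphism, and invoking Lemma \ref{twistedcomplem} together with coherence for the Koszul signs --- is precisely the standard argument the authors implicitly have in mind, so there is nothing to compare against and no gap to report.
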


\subsection{Generalization of twists}
The theory of twisted triples works equally well for the other triples in Table \ref{optable}. In all these cases one has to specify
the following things. First, what the category of graphs is. This is given by contractions of edges and in the non--connected case also
by so called mergers, where two vertices are fused together keeping all inputs and outputs; see Appendix. Furthermore one has to specify a vertex type $*_{\Gamma}$ for each graph,
such that the component $[\Gamma]$ of the morphism
$\T\O\to \O$ yields $\circ_{\Gamma}:\O(\Gamma)\to \O(*_{\Gamma})$. Equivalently the
morphism $\T\T\to \T$ expands a vertex $*_{\Gamma}$ to all graphs with that vertex type.
In all
the cases there is a canonical choice given by the result of a total contraction of all edges followed by a total merger \cite{BM}.

Again as in Lemma \ref{tensorlem}, tensoring together twisted versions tensors the twists.

\subsubsection{Odd and anti-- as coboundaries}
Notice that the twists $\K$ always make sense and $\mathfrak s$ for the cyclic situation.
If we restrict $\K$ to trees, we find that the twist by $\K$ is precisely the twist by $\D_{\Sigma}\D_\fr{s}$. But the shift
$\Sigma \fr{s}$ was exactly what we associated to the grading of the Hochschild complex. Hence with hindsight, we could have
worked with $\K$ twisted operads and $\K$ twisted cyclic operads.

More precisely we have the  list of operad-like types given in  Table \ref{oddtable},
which could equivalently be defined as algebras over twisted triples.

\begin{table}
\begin{tabular}{lllll}
Type&defining &value (of $\mathfrak{l}$ & on& isomorphic twist\\
&twist&if coboundary)&&\\
\hline
odd operad&$\D_{\Sigma}$&$\Sigma k$&$*_n$&$\D_{\Sigma s}\simeq \K$\\
odd cyclic operads&$\D_{\Sigma \fr{s}}$&$\Sigma^{n-1} sgn_n$&$*_{((n))}$&$\K$\\
odd (wheeled) PROP(erad)&$\D_{s_{out}}$&$\Sigma^m sgn_m$&$*_{n,m}$&$\K$\\

$\K$--modular&$\K$&$Det(Edge)$&$\Gamma$&$\K$\\
\end{tabular}
\caption{\label{oddtable} Types of odd structures defined by  twisted triples via Proposition \ref{twistprop},  for twists that are isomorphic to $\K$. The twists
for the nc versions are the same.}
\end{table}

For (cyclic) operads, we have already clandestinely encountered these twists. Namely,
the odd (cyclic) operads are nothing but algebras of the triple of rooted trees, (respectively trees),  twisted by $\D_{\Sigma \fr{s}}$.
One can also check that anti-cyclic operads are equivalent to algebras over the triple twisted by $\D_\fr{s}$.
See Lemma \ref{twistisolem} for the proofs.

\begin{lem}
\label{twistisolem}
We have the following isomorphisms:
For operads $\D_{s}\simeq \unit$ and all the isomorphisms listed in Tables \ref{oddtable}.
\end{lem}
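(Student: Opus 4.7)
The plan is to reduce every claim in the lemma to the four basic twist identities recalled just before its statement---namely $\mathfrak K\simeq \mathfrak T\otimes\mathfrak D_{\mathfrak s}$, $Det\simeq \mathfrak T\otimes\mathfrak D_\Sigma^{-1}$, $\mathfrak D_s\simeq \mathfrak L^{-1}\otimes\mathfrak K^{\otimes 2}$, and $\mathfrak D_{\tilde s}\simeq \mathfrak L^{-1}$---together with Remark~\ref{treermk} (triviality of $Det$ on trees), Remark~\ref{ksqrmk} ($\mathfrak K^{\otimes 2}(\Gamma)=\Sigma^{-2|E(\Gamma)|}$), and the multiplicativity of $Det$ over direct sums (equation~(\ref{deteq})). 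For the first assertion $\mathfrak D_s\simeq \unit$, the operadic triple is supported on rooted trees, so $Det$ is trivial there. Combining $\mathfrak D_s\simeq \mathfrak L^{-1}\otimes\mathfrak K^{\otimes 2}$ with the tree identities $|E(\Gamma)|=|V(\Gamma)|-1$ and $|Flag(\Gamma)|-|Tail(\Gamma)|=2|E(\Gamma)|$, one checks directly that $\mathfrak L^{-1}\otimes \mathfrak K^{\otimes 2}$ is the trivial line object on any rooted tree.

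For the remaining entries I proceed row by row. The $\mathfrak K$-modular row is tautological. The odd and odd-cyclic rows are the content of Remark~\ref{treermk}: restricting to the (rooted or unrooted) tree triple, $\mathfrak K\simeq \mathfrak D_{\mathfrak s}\otimes \mathfrak D_\Sigma=\mathfrak D_{\Sigma\mathfrak s}$, which is exactly the claimed isomorphism. The anti-cyclic row of Table~\ref{eventable} is then obtained by tensoring with $\mathfrak D_\Sigma^{-1}$, giving $\mathfrak D_{\mathfrak s}\simeq \mathfrak K\otimes \mathfrak D_\Sigma^{-1}$, and the operad row of Table~\ref{eventable} is the analogous statement in the rooted-tree subcase via the first assertion.

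The substantive new check is the (wheeled) PROP(erad) row. For a directed graph $\Gamma$ with $n$ external inputs, $m$ external outputs, and vertices $v$ carrying $(n_v,m_v)$ flags, the coboundary formula gives
\begin{equation}
\mathfrak D_{s_{out}}(\Gamma)=\Sigma^{m}\, sgn_m\otimes \bigotimes_v \Sigma^{-m_v}\, sgn_{m_v}^{-1}\simeq \Sigma^{-|E(\Gamma)|}\otimes\Big(sgn_m\otimes \bigotimes_v sgn_{m_v}\Big),
\end{equation}
since in a directed graph each edge accounts for exactly one outgoing flag at some vertex that is not an external output, whence $|E(\Gamma)|=\sum_v m_v-m$. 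The total set of output flags partitions $Aut(\Gamma)$-equivariantly as the external outputs amalgamated with the outgoing halves of edges, and equation~(\ref{deteq}) then identifies the sign-representation factor with $Det(Edge(\Gamma))=\mathfrak K(\Gamma)$. The same argument applies verbatim in the wheeled case, as $\mathfrak D_{s_{out}}$ is defined from local flag data alone. The PROP(erad) row of Table~\ref{eventable} follows by tensoring with $\mathfrak D_{\Sigma_{out}}^{-1}$, and the anti-modular row of Table~\ref{eventable} follows by rewriting $Det\cdot \mathfrak D_{\mathfrak s}\simeq \mathfrak K\otimes \mathfrak D_\Sigma^{-1}$ using the second and first basic identities recalled above.

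The main obstacle will be the equivariant bookkeeping in the PROP(erad) step: the isomorphism $sgn_m\otimes\bigotimes_v sgn_{m_v}\simeq Det(Edge(\Gamma))$ must be constructed naturally in $Aut(\Gamma)$ so that it descends under the colimit defining the twisted free functor, rather than merely as a non-canonical isomorphism of vector spaces. Once this naturality is in place, every other case in the tables is a bookkeeping exercise in the four standard twist identities.
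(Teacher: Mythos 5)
Your proposal is correct and follows essentially the same route as the paper: the decisive computation in both is $\D_{s_{out}}(\Gamma)\simeq Det(Edge(\Gamma))$ via the bijection between edges and non--tail output flags, and the (odd) cyclic rows are read off from Remark~\ref{treermk} exactly as the paper does. The only divergence is the first claim $\D_{s}\simeq \unit$, which the paper verifies by a direct degree count $n-1+\sum_v(1-ar(v))=0$ on rooted trees using the operadic coboundary $\Sigma^{n-1}sgn_n$ (signs on inputs only), whereas you route it through the relation $\D_s\simeq \mathfrak L^{-1}\otimes \K^{\otimes 2}$, which strictly concerns the modular coboundary $\mathfrak s$ on all flags --- both give triviality on rooted trees, so this is a harmless variation provided you note the canonical in/out ordering of the two flags of each edge that trivializes $Det^{-1}(Flag\setminus Tail)$ there.
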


\begin{proof}

$\mathfrak{D}_s$ is concentrated in degree $0$ and the $\Sn$ action
is trivial. Indeed for an $n$--tree the shift is $n-1+\sum_v (1-ar(v))=n-1+|V|-|E_{int}|+n=0$.

For $\D_{\Sigma \fr{s}}$ the value on an $S$ labeled rooted tree
is $\D_{\Sigma \fr{s}}(T)=Det^{-1}(S)\otimes\bigotimes_v Det(In(v))\simeq Det(Edge)=\K(T)$.  Similarly for non-rooted trees.
Finally, for the PROP(erad)s for $\Gamma$ of type $(n,m)$ that is $n$ inputs and $m$ outputs
$\D_{s_{out}}(\Gamma)=Det^{-1}(Tail_{out}(\Gamma))\otimes \bigoplus_v Det(Flag_{out}(v))\simeq Det(Edge)\simeq \K$
where we used that the set of non-tail flags is in bijection with the edges.
\end{proof}

\begin{lem}
Notice that for PROP(erads) by an analogous argument $\D_{s_{out}}\simeq \D_{s_{in}}\simeq \K$ so that
$\D_{\fr{s}}\simeq \D_{s_{in}}\D_{s_{out}}^{-1}\simeq \unit$. Thus a suspended PROP(erad) is a PROP(erad). \qed
\end{lem}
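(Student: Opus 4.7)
The plan is to follow the pattern of the preceding lemma on $\D_{s_{out}}$ and then combine. First, I would repeat the edge-bijection argument for $\D_{s_{in}}$. For a directed graph $\Gamma$ of type $(n,m)$ underlying a PROP(erad) triple, each internal edge consists of exactly one outgoing flag at its source and exactly one incoming flag at its target. Hence the non-tail in-flags are in bijection with the edges, giving
\begin{equation}
\bigotimes_{v \in V(\Gamma)} Det(Flag_{in}(v)) \;\simeq\; Det(Tail_{in}(\Gamma)) \otimes Det(Edge(\Gamma)).
\end{equation}
Plugging this into the definition
\begin{equation}
\D_{s_{in}}(\Gamma) \;=\; Det^{-1}(Tail_{in}(\Gamma)) \otimes \bigotimes_{v} Det(Flag_{in}(v))
\end{equation}
yields $\D_{s_{in}}(\Gamma) \simeq Det(Edge(\Gamma)) = \K(\Gamma)$, and this isomorphism is natural in $\Gamma$ so defines an isomorphism of twists. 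Together with the isomorphism $\D_{s_{out}} \simeq \K$ established in the previous lemma, we get $\D_{s_{in}} \simeq \D_{s_{out}} \simeq \K$.

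Next I would combine these. Since the full suspension decomposes as $s = \sin \sout^{-1}$ (as recorded just before the definition of odd PROP(erad)s), the associated coboundary twist factors accordingly: $\D_s \simeq \D_{s_{in}} \otimes \D_{s_{out}}^{-1}$. Substituting the two isomorphisms above gives $\D_s \simeq \K \otimes \K^{-1} \simeq \unit$.

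Finally, I would invoke Proposition \ref{twistprop}: since $\D_s$ is (isomorphic to) the trivial twist, the categories of algebras over the triple $\T_{\D_s}$ and over the untwisted triple $\T$ are equivalent, with the equivalence given by tensoring with the coboundary datum $\mathfrak{l}_s$. Translating back, this says that a suspended PROP(erad) carries the structure of an (ordinary) PROP(erad), as claimed.

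The only step requiring any care is verifying that the bijection ``non-tail flags $\leftrightarrow$ edges'' is compatible with the determinant functoriality (cf.\ equation \eqref{deteq}), which is where the previous proof already did the bookkeeping; the in-flag case is a direct transcription since a directed edge is symmetrically endowed with one flag of each orientation type. No new difficulty appears, so I do not expect any serious obstacle beyond this sign/order housekeeping.
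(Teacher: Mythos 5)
Your proposal is correct and follows exactly the route the paper intends: the paper's ``by an analogous argument'' refers precisely to repeating the non-tail-flag/edge bijection from the proof of Lemma \ref{twistisolem} on the in-flags, and then composing the coboundaries via $s=\sin\sout^{-1}$ and invoking Proposition \ref{twistprop}. No gaps.
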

\begin{rmk}
In \cite{MMS} the following cocycles are also used: $\mathfrak{s}=s^{-1}$, $w=\K^{-1} \mathfrak{s}$.
It seems although stated differently, that in \cite{MMS} they use $\D_{s^{-1}_{out}}\simeq \K^{-1}$ to twist,
which is equivalent since the categories of the twisted PROP(erad)s are equivalent by Proposition \ref{twistprop}
\end{rmk}

\subsubsection{Odd operads and anti-cyclic operads as twisted operads and their relation to $\K$.}
\label{oddkpar}
 Now we can make the mantra (1) precise by using $\K$ twisted instead of odd.
\begin{thm}
\label{kversionthm}
All $\K$ twisted versions in Table \ref{oddtable} carry a natural odd Lie bracket on the direct sum of their coinvariants.
Their shifts accordingly carry a Lie bracket.
\end{thm}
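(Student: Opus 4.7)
The plan is to prove the theorem uniformly across all four rows of Table~\ref{oddtable} by exploiting the ``odd edge'' interpretation of $\K$, and then to deduce the even version by a single shift argument.

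First, I would unify the setup. By Lemma~\ref{twistisolem}, each of the entries in Table~\ref{oddtable} is (up to isomorphism of twists, hence equivalent categories of algebras by Proposition~\ref{twistprop}) an algebra over a $\K$-twisted triple, where $\K(\Gamma) = Det(Edge(\Gamma))$. By Lemma~\ref{twistedcomplem}, this is equivalent to having equivariant compositions
\[
\circ_{ord,\Gamma} : \O(\Gamma) \to \O(*_\Gamma)
\]
for each admissible graph $\Gamma$ and each $ord \in \K(\Gamma)$. For a graph $\Gamma$ with a single edge $e$ obtained by gluing two corollas along flags $s,t$, the space $\K(\Gamma)$ is one-dimensional, spanned by the edge $e = s\wedge t$, and this gives a canonical (up to a sign from edge orientation) generator. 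I would then define the bracket $\{a\bullet b\}$ as the sum over all single-edge non-self gluings, i.e.\ over all valid pairs $(s,t)$ of flags, of the compositions $\circ_{e,\Gamma}(a\otimes b)$, with the $\K$-weight of the edge carrying the requisite degree shift.

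Next, I would verify the two odd Lie axioms. For odd anti-symmetry, swapping the roles of $a$ and $b$ in a one-edge gluing amounts to reversing the edge $e = s\wedge t \mapsto t\wedge s$, which produces a sign $-1$ in $\K$, together with the Koszul sign $(-1)^{|a||b|}$ from permuting $a$ past $b$; these combine to $-(-1)^{s(a)s(b)}$ as needed. For odd Jacobi, the calculation is essentially the one carried out in the proof of Proposition~\ref{antibracketprop}: expanding $\{a\bullet\{b\bullet c\}\}$ produces two types of terms, depending on whether the outer edge attaches $a$ to $b$ or to $c$, and reassociating these produces the six terms that pair up and cancel. The novelty here versus Proposition~\ref{antibracketprop} is only that the sign bookkeeping is now supplied automatically by the $\K$-twist (odd edges plus the Koszul rule) rather than by hand through the anti-cyclic sign; this is precisely the ``essential remark'' of the sign discussion above. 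Descent to coinvariants is then immediate from the $\Sn$- (resp.\ $\Snp$-, $\Sn\times\SS_m$-, $Aut(\Gamma)$-) equivariance of the compositions, exactly as in the operad case.

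Finally, to go from $\K$-twisted to the even entries of Table~\ref{eventable}, I would use the tabulated isomorphisms: each even row differs from the corresponding $\K$-row by precisely a coboundary twist by $\D_\Sigma$ (one na\"ive shift). By Proposition~\ref{twistprop} tensoring with $\mathfrak{l}=\Sigma$ is an equivalence of categories of algebras, and under this equivalence the natural grading shifts by one at each object. By Lemma~\ref{shiftlem}, an odd Lie bracket on $V$ is the same datum as a Lie bracket on $\Sigma V$; applying this objectwise to the coinvariants yields the Lie bracket in the even case from the odd Lie bracket in the $\K$-twisted case.

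The main obstacle I expect is checking the odd Jacobi identity uniformly: while the shape of the cancellation is the same as in Proposition~\ref{antibracketprop}, one has to confirm that the signs produced by the $\K$-twist (orientations of edges, permutations of edge sets when two edges are present, and the automorphism actions at two-edge graphs of the various types) agree with the Koszul signs coming from the odd-edge convention in every case, including the wheeled/properadic case where graphs can have loops. The remaining axioms and the descent to coinvariants are routine once this sign compatibility is pinned down.
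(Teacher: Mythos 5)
Your proposal is correct and follows essentially the same route as the paper: the paper's proof likewise invokes Proposition~\ref{twistprop} and Lemma~\ref{twistisolem} to identify the $\K$--twisted structures with the odd/anti--cyclic structures whose brackets were already established (the Jacobi verification being exactly that of Proposition~\ref{antibracketprop}), and obtains the even table by the shift of Lemma~\ref{shiftlem}. The only point the paper states explicitly that you leave implicit is that the $\K$--modular case reduces to the odd cyclic one because the non--self gluings occur only along one--edge trees, where the $Det(H_1)$ part of the twist is trivial.
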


\begin{proof}
The first statement is just a rephrasing of our previous results, using Proposition \ref{twistprop} and Lemma \ref{twistisolem}
 except for the case of $\K$--modular operads which
for the bracket reduces to the case of odd cyclic, since the gluing is only along trees.
\end{proof}

\section{Odd self-gluing and the BV differential}
\label{bvpar}
In this paragraph, we deal with mantra (3). For this we need odd self-gluings.
We have
already treated odd wheeled PROP(erads).
We now turn to $\K$--modular operads.

The most important fact that we need is that $\K$--modular operads have an odd self--gluing structure
that is the operations $\bullet_{ss'}:\O(S)\to \O(S\setminus \{s,s'\})$
such that for four element subsets  $\{s,s',t,t'\}\subset S$ and $a\in \O(S)$
\begin{equation}
\label{anticomeq}
\bullet_{ss'}\bullet_{tt'}(a)=-\bullet_{tt'}\bullet_{ss'}(a)\in \O(S\setminus \{s,s',t,t'\})
\end{equation}

Using the language of graphs, the two different operations correspond to a graph with one vertex, with flags indexed by $S$ and with
two pairs of flags $\{s,s'\}$ and $\{t,t'\}$ joined together as edges $e_1$ and $e_2$, the two compositions however correspond to
$\circ_{e_1\wedge e_2,\Gamma}$ and $\circ_{e_2\wedge e_1,\Gamma}$ in the notation of  Lemma \ref{twistedcomplem},
which differ by a minus sign.

\begin{prop}
The operator $\Delta$ defined on each $\O(g,S)$ defined by
\begin{equation}
\Delta(a)=\sum_{\{s,s'\}\in S, s\neq s'}\bullet_{ss'}(a) \in \bigoplus_
{\{s,s'\}\in S, s\neq s'}\O(g+1,S\setminus \{s,s'\})
\end{equation}
satisfies $\Delta^2(a)=0$ for any $a\in \O(g,S)$.
\end{prop}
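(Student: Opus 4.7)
The plan is to unpack $\Delta^2(a)$ as an iterated sum and show that terms cancel in pairs thanks to the anti-commutation relation (\ref{anticomeq}). Concretely, for $a \in \O(g,S)$, one computes
\begin{equation}
\Delta^2(a) = \sum_{\substack{\{s,s'\} \subset S \\ s \neq s'}} \;\sum_{\substack{\{t,t'\} \subset S \setminus \{s,s'\} \\ t \neq t'}} \bullet_{tt'} \bullet_{ss'}(a),
\end{equation}
living in an appropriate direct sum indexed by the four-element subsets that get glued away. The summation index is naturally rewritten as a sum over \emph{ordered pairs} of disjoint two-element subsets of $S$.

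The key observation is that each such ordered pair $(\{s,s'\},\{t,t'\})$ has a companion $(\{t,t'\},\{s,s'\})$ contributing to the same component indexed by the four-element subset $\{s,s',t,t'\}$ and with the same resulting genus $g+2$. Applying (\ref{anticomeq}) to $a$ with the four-element subset $\{s,s',t,t'\}$ yields $\bullet_{tt'} \bullet_{ss'}(a) = -\bullet_{ss'} \bullet_{tt'}(a)$, so the two summands cancel. Grouping the double sum into unordered pairs of disjoint two-element subsets and invoking this cancellation on each such pair gives $\Delta^2(a) = 0$.

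I would justify the anti-commutation relation itself by noting that both $\bullet_{tt'}\bullet_{ss'}$ and $\bullet_{ss'}\bullet_{tt'}$ correspond to the composition $\circ_{ord,\Gamma}$ of Lemma \ref{twistedcomplem}, where $\Gamma$ is the one-vertex graph with edges $e_1 = \{s,s'\}$ and $e_2 = \{t,t'\}$, and $ord \in \K(\Gamma) = \mathrm{Det}(\mathrm{Edge}(\Gamma))$ is either $e_1 \wedge e_2$ or $e_2 \wedge e_1$. Since these two orderings of the basis of $\K(\Gamma)$ differ by a sign, so do the corresponding compositions; this is precisely the content of (\ref{anticomeq}).

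The main obstacle is purely bookkeeping, namely making sure the partition of the double sum into matched pairs is literally an involution without fixed points (which is why the condition $s \neq s'$, $t \neq t'$ and the disjointness of $\{s,s'\}$ and $\{t,t'\}$ are essential) and that the equivariance built into the colimit defining $\T_\K$ identifies the two terms as living in the same summand up to the sign supplied by $\K$. Once (\ref{anticomeq}) is in hand, no further calculation is needed.
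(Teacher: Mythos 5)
Your proposal is correct and follows essentially the same route as the paper: the paper likewise fixes the four-element subset $\{s,s',t,t'\}$, observes that the six resulting contributions (the three unordered partitions into two pairs, each in both orders) cancel pairwise via the anti-commutation relation (\ref{anticomeq}), which it also justifies exactly as you do, by identifying the two orders of composition with $\circ_{e_1\wedge e_2,\Gamma}$ and $\circ_{e_2\wedge e_1,\Gamma}$ in the sense of Lemma \ref{twistedcomplem}.
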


\begin{proof}
We consider the component $S\setminus \{s,s',t,t'\}$ for fixed $s,s',t,t'$.
It will get six contributions which appear pairwise. Each pair
corresponds to an ordered partition $\{a,b\}\amalg \{c,d\}$ of $\{s,s,t,t'\}$
and the two terms appear with opposite sign. These are the compositions for the $S\setminus \{s,s',t,t'\}$--labeled graph with one vertex and
two edges in both orders of the two edges.
\end{proof}

\begin{rmk}
Here we chose to index by two element subsets of $S$. If we index by tuples $(s,s')$ and we are in characteristic different from two then we obtain
the more familiar form:
$$
\Delta(a)=\frac{1}{2}\sum_{(s,s')\in S, s\neq s'}\bullet_{ss'}(a) \in \bigoplus_
{\{s,s'\}\in S\times S, s\neq s'}\O(g+1,S\setminus \{s,s'\})
$$
\end{rmk}

Passing to coinvariants, we obtain an instance of mantra (3)
\begin{prop}
$\Delta$ induces a differential on $\OSp$ that is $\Delta^2=0$.
This differential lifts to the cyclic invariants and to the biased setting.
\end{prop}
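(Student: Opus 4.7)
The plan has three parts, corresponding to the three claims: descent to coinvariants, lift to cyclic invariants, and transfer to the biased setting. Since the identity $\Delta^{2}=0$ has already been established pointwise on each $\O(g,S)$, the work reduces to verifying compatibility with the various symmetric group actions.

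First, I would show that $\Delta$ descends to $\OSp$. The key input is equivariance of the twisted gluings: for a bijection $\phi\colon S\to S'$, Lemma \ref{twistedcomplem} combined with the functoriality of $\D=\K$ under graph isomorphisms gives
\[
\bullet_{\phi(s)\phi(s')}(\phi_{*}a)=\phi'_{*}\bigl(\bullet_{ss'}(a)\bigr),
\]
where $\phi'$ is the bijection induced on $S\setminus\{s,s'\}$. Because $\Delta(a)=\sum_{\{s,s'\}}\bullet_{ss'}(a)$ is summed over all two-element subsets, the relabeling by $\phi$ only permutes the indexing set of summands. Hence $\Delta$ commutes with the symmetric group action (up to identification of the targets) and so induces a well-defined map on coinvariants. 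The equality $\Delta^{2}=0$ descends immediately from the previous proposition.

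Second, the lift to cyclic invariants is obtained by the same argument restricted to the cyclic subgroup $C_{|S|+}\subset \SS_{|S|+}$. Since cyclic invariance is a special case of the equivariance just used, $\Delta$ preserves the subspace of cyclic invariants, and once again $\Delta^{2}=0$ is inherited.

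Third, the biased setting is obtained by choosing, for each $S$, the standard representative $\{0,1,\dots,|S|-1\}$ and transporting $\Delta$ along the canonical bijection. Independence of choices is exactly the equivariance established above, so the biased formula
\[
\Delta(a)=\sum_{0\le i<j\le n}\bullet_{ij}(a)
\]
on $\O((g,n+1))$ is unambiguous and still squares to zero.

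The only subtlety worth flagging --- and what I would expect to be the main bookkeeping obstacle --- is the sign arising from the $\K$-twist: permuting the labels of flags relabels the edges produced by $\bullet_{ss'}$, and $\K(\Gamma)=Det(Edge(\Gamma))$ introduces a sign whenever the induced ordering on edges is altered. One must check that these twist signs combine with the signs coming from the symmetric group action on $\O(g,S)$ so that each individual summand $\bullet_{ss'}(a)$ transforms as expected, making the total sum $\Delta(a)$ equivariant on the nose. This is a direct unwinding of the $\K$-twisted composition formula, entirely analogous to the sign verification in the wheeled PROP(erad) case treated in \S\ref{BVproppar}, and once it is in place all three claims follow.
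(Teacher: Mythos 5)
Your proposal is correct and follows essentially the same route as the paper, whose proof is just two sentences: $\Delta^2=0$ descends because the cancelling pairs $\bullet_{ss'}\bullet_{tt'}=-\bullet_{tt'}\bullet_{ss'}$ already land in the same component, and the lifts rest on the observation that $\{0,\dots,\hat{i},\dots,\hat{j},\dots,n\}$ retains a natural linear \emph{and} cyclic order (unlike the two-set gluing of Caveat \ref{caveat}). That last point is the one piece of real content for the biased and cyclic lifts, and it is present, if slightly buried, in your appeal to the canonical order-preserving bijection; the $\K$-twist sign you flag is harmless here since a single self-gluing involves only a one-element edge set.
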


\begin{proof}
On $\OSp$ the equality follows directly from (\ref{anticomeq}). For the lifts, we remark that $\{0,\dots \hat i, \dots \hat j, \dots n\}$
has a natural cyclic and linear order.
\end{proof}

\begin{rmk}
In the biased setting as shown in \cite{Zwie,Schwarz} it is sufficient to lift $\Delta$ to $\bullet_{n-1n}$ on $\O(n)$.
\end{rmk}

Now we have mantra (2) in the form:
\begin{thm} \label{r5}
The $\K$ twisted version of modular operads, wheeled PROP(erad)s and the chain level Schwarz extended modular operads (EMOs)
carry a differential $\Delta$ on their coinvariants.
\end{thm}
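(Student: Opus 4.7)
The plan is to treat the three cases in parallel by observing that each is an algebra over a triple of graphs twisted by $\K$, and that self--gluings of any $\K$--twisted structure anti--commute. This anti--commutativity is the only ingredient needed for $\Delta^2=0$ and for the descent to coinvariants, so the proof in all three cases reduces to verifying the single--edge picture and then summing.

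First, for $\K$--modular operads, the statement is exactly what has just been established: using Lemma \ref{twistedcomplem} the self--gluings $\bullet_{ss'}$ are encoded by the one--vertex graph with one loop equipped with the edge orientation, and the $\K$--twist value $\mathrm{Det}(\mathrm{Edge}(\Gamma))$ on the graph with two loops $\{s,s'\}$ and $\{t,t'\}$ forces $\bullet_{ss'}\bullet_{tt'}=-\bullet_{tt'}\bullet_{ss'}$ as in equation \eqref{anticomeq}. I would quote the proposition directly and note the lift to cyclic invariants and to the biased setting as already recorded.

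Second, for odd wheeled PROP(erad)s, I would simply appeal to the wheeled theorem in \S\ref{BVproppar}: Lemma \ref{oddselflem} gives $\bullet_{ij}\bullet_{kl}=-\bullet_{k'l'}\bullet_{i'j'}$ after reindexing, which is the wheeled analogue of \eqref{anticomeq}, and the very same bookkeeping---six contributions pairing into three cancelling pairs indexed by ordered partitions of the four involved flags---gives $\Delta^2=0$. Passage to coinvariants is automatic because $\Delta$ is defined as the sum over \emph{all} input--output self--gluings and therefore commutes with the $\SS_n\times\SS_m$ action used to form $\Op_{\SS}$.

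Third, for the chain level Schwarz EMOs, the strategy is the same once one identifies the self--gluing structure. I would describe an EMO as a structure with one--parameter families of self--gluings coming from $S^1$--actions; chain--level these are operations of degree $1$, which is the hallmark of a $\K$--twist as in the odd edge interpretation of $\K$ (and in the discussion in \S\ref{toppar} / Theorem E). With this identification, $\bullet_{ss'}\bullet_{tt'}+\bullet_{tt'}\bullet_{ss'}=0$ holds at the chain level by the Koszul sign from commuting two degree--one operators, and the same counting argument used for modular and wheeled cases gives $\Delta^2=0$ on chains and hence on coinvariants.

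The main obstacle, as I see it, is not the algebraic identity $\Delta^2=0$---which is a one--line graphical computation once the $\K$--twist is in place---but ensuring a uniform formulation of ``odd self--gluing'' across the three frameworks, in particular matching the conventions for the chain--level Schwarz EMOs to the $\K$--twisted setup. I would therefore spend the bulk of the write--up on the dictionary between $S^1$--family gluings and elements of $\K(\Gamma)=\mathrm{Det}(\mathrm{Edge}(\Gamma))$ at the level of fundamental chains of the loop, and then let the rest of the proof follow from the already established propositions.
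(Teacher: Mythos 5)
Your proposal is correct and follows essentially the same route as the paper, which states this theorem as a summary of the preceding results: the anticommutativity of odd self--gluings (equation (5.1) for $\K$--modular operads, Lemma 3.13 for odd wheeled PROP(erad)s) gives $\Delta^2=0$ by the six--term cancellation, descent to coinvariants follows because $\Delta$ sums over all self--gluings, and the EMO case reduces to the modular one via the identification of chain--level $S^1$--twist gluings with a $\K$--twist (Theorem E). The only part the paper leaves implicit and you rightly flag as needing care is the dictionary between the $S^1$ fundamental class on each edge and the generator of $\mathrm{Det}(\mathrm{Edge}(\Gamma))$, which is exactly where the degree--one/Koszul sign comes from.
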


Where the EMOs are discussed in \ref{emopar}.

\section{Multiplication, Gerstenhaber and BV}\label{gbvsec}

So far for cyclic and modular operads, we
 have only constructed (odd) Lie brackets and differentials.
In order to upgrade them to Gerstenhaber respectively Poisson algebras
and BV operators, we need an additional multiplicative structure.

Following \cite{Zwie,Schwarz,HVZ}
we show that there is a natural external multiplication one can introduce by
going to disconnected graphs. It is the external
multiplication that is natural to consider in the master equation as that
equation is a linearization of an equation involving an exponential.

There is a second type of multiplicative structure that is possible.
 This is an internal product; that
is an element $\mu\in \O(2)$ which is associative.
Although a little bit outside the main focus of the paper,
we deal with the second type of multiplication in order to
contrast it with the one above.
This second type of structure appears in Deligne's
conjecture \cite{KS,McSm,Vor,del,BF,T}, its cyclic generalization \cite{cyclic}.

A last possibility is an $A_{\infty}$ version which was
studied in \cite{TZ,KSchw,manfest,Ward},
but that goes beyond the scope of this paper.  Here one relies on the fact that the $A_\infty$-operad represents the functor assigning MC elements to an operad via the above Lie algebra construction.  The above results, along with the results of section $\ref{feynmanpar}$, then give a suitable framework for generalizing this internal multiplication outside the operad case, see \cite{Ward2}.

\subsection{Non-connected versions.}
\label{NCpar}
{\it  A priori} an operad of the above kinds has no multiplication.
We can however add a generic one, by passing from connected graphs to
non-connected ones.  In general, to get the nc-version one uses compositions along graphs of the same type as before, but drops the assumption that
the relevant graphs are connected.
Some care must be used however, since it is not always clear how this
should be implemented.

\subsubsection{Non-connected (odd) operads}
It turns out that operads are the most difficult example from this perspective and there are several nc-generalizations.
This is because the straightforward way of treating the graphs along which the compositions
are defined needs to be interpreted. Namely  taking disjoint
unions of rooted trees one arrives at rooted forests.  This changes the number of outputs from strictly $1$ to any number $m$; the number of trees in the forest.  There are at least 4 ways to deal with this:
\begin{enumerate}
\item The PROP generated by an operad
\item Nc-dioperad generated by an operad

\item The free nc--version according to \cite{feynman}.
\item The $B_+$ construction or operads with nc  multiplication.
\end{enumerate}
For a given operad $\O$ there is always a free extension $\O^{nc}$ yielding an object of the given class.

The basic idea for all these versions is that we move to a collection of rooted corollas as ``vertices''. If we simply use mergers,
then we see that merging rooted corollas, we obtain directed corollas. The picture one should have in mind is a box which contains
the $m$ corollas, see Figure \ref{boxedoperadfig}.

\begin{figure}
\centerline{\includegraphics[height=.5in]{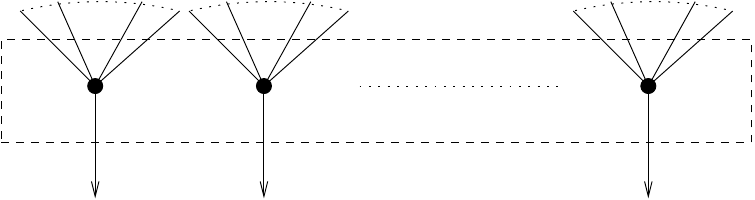}\hfill \includegraphics[height=1in]{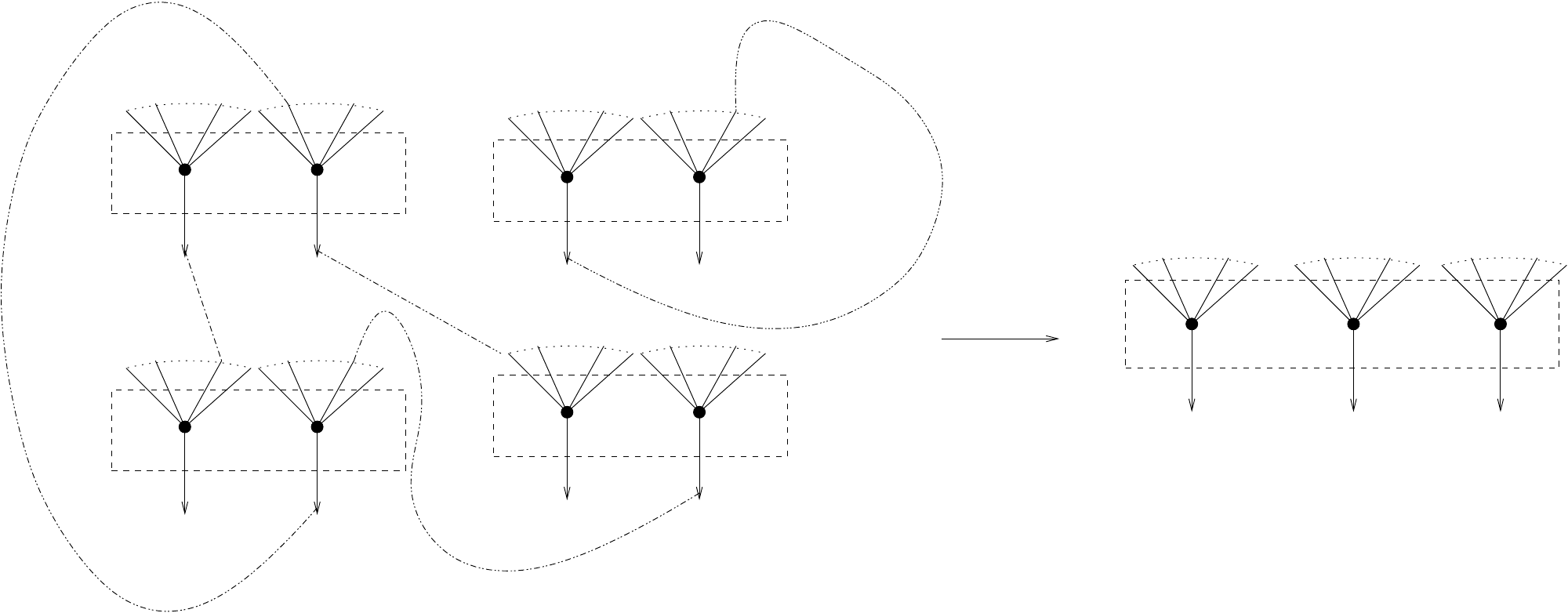}}
\caption{\label{boxedoperadfig}Boxing $m$ corollas and  connecting boxed corollas with trees and \label{freenccompfig} contracting their edges.}
\end{figure}

(1) Thinking of a directed corolla with $m$ outputs as any collection of $m$ corollas with the correct number of inputs,
and allowing PROP gluings between these collections we arrive at PROPs generated by an operad.
That is we have $\SS_n\times \SS_m$ modules $\O(n,m)$ with a decomposition given by equation
(\ref{oppropeq}) and allow PROP operations between these. This means that we use the fine structure of the box for the decomposition,
but for the compositions only use the outside structure of the boxes.
The nc--extension of an operad $\O$ is Example \ref{propgenopex}.

(2)  We can proceed as in (1) but restrict to only the dioperadic gluings.

(3) The free nc--construction of \cite{feynman} yields the following concrete realization: The gluings between the boxed corollas are defined by first removing all the boxes, then performing all possible gluings, which are rooted forests and finally reboxing the result,
see Figure \ref{freenccompfig} for an example.

Here we have $\SS_{n_1}\times \dots \times \SS_{n_m}$--modules $\O(n_1,\dots,n_m)$ with compositions given by rooted forests. The operations are  generated by single edge gluings, and disjoint union. The dioperadic gluings are a subset.
The nc--extension of a given $\O$ is $\O^{nc}(n_1\dots,n_m):=\bigotimes_{i=1}^m\O(n_i)$ together with $\boxminus=\otimes$ and the
operations induced by the $\circ_i$.

In all the above cases the triple is given by inserting into the boxes or dually expanding the boxes to graphs.

(4) Finally, we can gather the trees in the rooted forest together by adding a common root vertex. This is the $B_+$ operator of \cite{CK}, see also \cite{feynman,GKT}.
It is depicted in Figure \ref{bplusfig}.

\begin{figure}
\centerline{\includegraphics[height=1in]{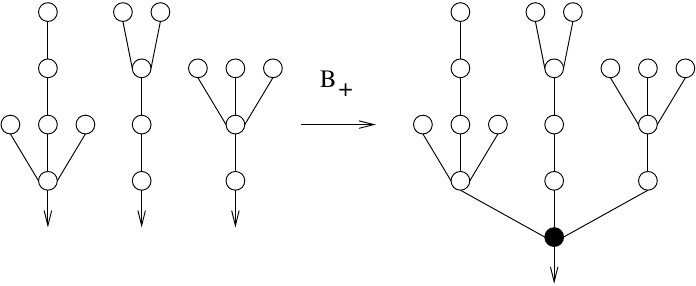}\hfill  \includegraphics[height=1in]{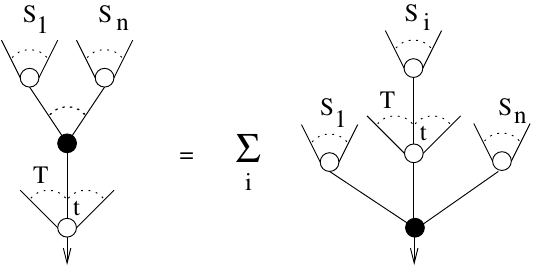}}
\caption{\label{bplusfig} The operator $B_+$ on a forest of three rooted trees and \label{ncbvmultfig}   the relation for nc--multiplication}
\end{figure}

Here for the triple, we need to insert into a vertex {\em and} then sum over all possible connections. One way to implement this is to consider the triple of b/w bipartite rooted trees with insertion into the white vertices (see e.g.\ \cite{del})
and then to force the relation of Figure \ref{ncbvmultfig} at each black vertex by moding out the respective ideal in the free Abelian group generated
by the trees.

Now an algebra is just a collection of $\SS_n$ modules  $\O(n)$ together with the $\circ_i$ and a horizontal composition $\boxminus$ that satisfies equations

\begin{eqnarray}
\label{ncmulteq}
a\circ_i (b\boxminus c)&=&(a\circ_i c) \boxminus b +a\boxminus (b\circ_ic) \nn\\
 (a\boxminus b)\circ_i c&=&\begin{cases} (a \circ_i b ) \boxminus c& \text{ if  index $i$ belongs to $a$}\\
 (a\circ_ic) \boxminus b &\text{ if the index $i$ belongs to or $b$.}\end{cases}
 \end{eqnarray}

The free extension is given by $\O^{nc}(n):=\bigoplus_{(n_1,\dots,n_m):\sum n_i=n}\bigotimes \O(n_i)$. Again $\boxminus=\otimes$ and
the $\circ_i$ are induced by the original ones by summing over all roots, see the formula (\ref{rootsumeq}) below.

{\sc The relations between the constructions are:} that (2) embeds into (1) and (1) into (3), they just have more gluing operations.
(4) embeds into (3) via

\begin{equation}
\label{rootsumeq}
a\circ_ib=\sum_{r=1}^m a\ccirc{i}{r}b \text{ if } b\in \O(n,m)
\end{equation}

\begin{rmk}
Operads with such an nc--multiplication arise for instance from operads with associative multiplication via Gerstenhaber's construction, see \S\ref{intmultpar}.
\end{rmk}

{\sc Odd versions:}
In order to achieve the correct odd notion, we again have to twist the relevant triple.
The twist is by $\K$ which as previously is the determinant of the edges of the graph describing the decomposition. We call an algebra over such
a triple a non--connected odd operad. Notice  that $\Gbracket$ is well defined
as the sum over the non--self gluings.

\begin{thm}\label{r6}
Given a non--connected odd operad in any of the four versions above,
the odd Lie bracket $\Gbracket$ is Gerstenhaber with respect to $\boxminus$.
\end{thm}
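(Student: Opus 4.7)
The plan is to establish the three axioms of a Gerstenhaber algebra: (i) $\boxminus$ is graded associative and commutative, (ii) $\{\;\bullet\;\}$ is an odd Lie bracket, and (iii) the odd Leibniz identity relating the two. Property (ii) has already been proved for any $\K$-twisted operad on coinvariants (Theorem \ref{kversionthm}), and in each of the four nc-constructions property (i) is immediate from the construction: $\boxminus$ is either concatenation of boxed corollas (versions 1, 2, 3) or juxtaposition below a common root vertex (version 4), and both are strictly associative and symmetric up to the commutativity constraint in $\gVect$.

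The substantive step is the odd Leibniz identity. My strategy is to first prove the unsymmetrized derivation identity
\begin{equation*}
a \bullet (b \boxminus c) = (a \bullet b) \boxminus c + (-1)^{s(a)|b|}\, b \boxminus (a \bullet c)
\end{equation*}
modulo the implicit reindexing of inputs/outputs. The key observation is that $a \bullet (-) = \sum_i a \bullet_i (-)$ is a sum of single-edge gluings, and each such gluing attaches one input of $a$ to a single output of exactly one of the factors in $b \boxminus c$. Hence the sum splits into two pieces according to which factor receives the edge. In versions (1) and (2) this identity is the odd shift of equation (\ref{nceq}); in the free nc-version (3) it is tautological, because boxes are merely bookkeeping and a gluing into a disjoint union decomposes as a gluing into each component; in version (4) it is the first of the defining relations (\ref{ncmulteq}) transported to the $\K$-twisted setting.

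Antisymmetrizing the above identity together with its mirror obtained by swapping the roles of the arguments then yields
\begin{equation*}
\{a \bullet (b \boxminus c)\} = \{a \bullet b\} \boxminus c + (-1)^{s(a)|b|}\, b \boxminus \{a \bullet c\},
\end{equation*}
which is precisely the odd Poisson/Gerstenhaber identity. The whole computation descends to coinvariants by equivariance of all single-edge gluings, and the odd Jacobi identity for $\{\;\bullet\;\}$ inherited from Theorem \ref{kversionthm} completes the Gerstenhaber structure.

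The main obstacle I expect is sign bookkeeping. The $\K$-twist modifies each elementary gluing by a sign depending on where the new edge sits in the ordering of edges of the underlying graph, and when $b$ is itself a horizontal product these signs must reconcile with the Koszul sign from commuting $a$ past $b$ in $\gVect$. The cleanest way to organize this, in the spirit of the odd-edge interpretation of $\K$ given in \S\ref{oddkpar}, is to assign weight one to each edge and then apply the Koszul rule uniformly; in that formulation every one of the four versions collapses to the same local identity at the single gluing edge, so one verification handles all cases at once.
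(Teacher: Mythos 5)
Your proof is correct and follows essentially the same route as the paper: the paper likewise decomposes the sum of single-edge gluings between $a$ and $b\boxminus c$ according to which factor of the horizontal product receives the edge, invokes equation (\ref{nceq}) for versions (1)--(3) and the defining relation (\ref{ncmulteq}) for version (4), and then antisymmetrizes to obtain (\ref{dereq}). Your additional remarks on the $\K$-twist signs and the odd-edge bookkeeping make explicit what the paper leaves implicit, but the underlying argument is the same.
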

\begin{proof}
This just boils down to the fact that before anti--symmetrizing on the left hand side of (\ref{dereq}),
we have a summand corresponding to connecting the inputs/output of $a$ to any element
of the set $S\amalg T$ if $b\in \O(S)$ and $c\in \O(T)$ say. The ones connecting the root
to $S$
are the first term, while
the ones connecting the root to $T$ are the second term of the rhs.
For the cases (1),(2),(3)
this follows from equation (\ref{nceq}) and for (4) by definition, i.e.\ equation (\ref{ncmulteq}).
\end{proof}

\subsubsection{Nc-cyclic}
For cyclic operads and modular operad the non--connected notions
have not appeared in the literature yet --- as far as we are aware.
The relevant triples
are those of forests (collections of trees).
For the triple, we insert forests into vertices. Notice that since there is no direction on the flags,
this operation is well--defined unlike the operad case.

 We will call the algebras over these triples nc--cyclic operads.
Again the relevant morphisms are given by isomorphisms, contracting
edges and combining collections. The disjoint union of two one vertex graphs stands for a merger and
gives a horizontal composition $\boxminus:\O(S)\otimes \O(T)\to \O(S\amalg T)$.
The twist by $\K$ makes sense and we obtain the notion of odd--nc--cyclic operad.

\begin{thm}\label{r7}
Given an  odd nc-cyclic operad,
the odd Lie bracket $\cGbracket$ is Gerstenhaber with respect to $\boxminus$ on the coinvariants
$\Op_{\SS}$.
\end{thm}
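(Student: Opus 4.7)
The plan is to mirror the proof just given for the nc--operad case. The Gerstenhaber identity on $\Op_{\SS}$ requires three ingredients: (i) that $\cGbracket$ is an odd Lie bracket, (ii) that $\boxminus$ descends to a graded commutative associative product on $\Op_{\SS}$, and (iii) that $\cGbracket$ acts as an odd derivation of $\boxminus$ up to the odd sign prescribed by the Poisson axiom.

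First I would dispose of (i) and (ii). Ingredient (i) is already established by Theorem~\ref{kversionthm} combined with the odd cyclic case treated in Section~2: the single--edge non--self gluings used to define $\cGbracket$ only involve two distinct connected pieces joined along one new edge, so the odd Lie property depends only on the underlying $\K$--twisted (equivalently anti--cyclic) operad structure and is insensitive to the additional horizontal multiplication. Ingredient (ii) is immediate from the triple description: $\boxminus : \O(S) \otimes \O(T) \to \O(S \amalg T)$ is induced by the disjoint union of one--vertex graphs, which is a symmetric monoidal operation on finite sets, so after passing to the $\SS$--coinvariants the resulting product is (graded) commutative and associative.

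The main step is (iii), the odd Leibniz rule. For $a \in \O(S)$, $b \in \O(T)$, $c \in \O(U)$ I would establish the pointwise identity
\begin{equation}
a \odot (b \boxminus c) \;=\; (a \odot b) \boxminus c \;+\; (-1)^{s(a)s(b)}\, b \boxminus (a \odot c)
\end{equation}
by splitting the defining sum $a\odot(b\boxminus c)=\sum_{s\in S,\, r\in T\amalg U} a\,\scircj{s}{r}\,(b\boxminus c)$ according to whether $r$ lies in $T$ or in $U$. When $r\in T$, the $\K$--twist contracts only the edge connecting $a$ to the $T$--piece, leaving $c$ untouched, which gives the first summand; when $r\in U$, one must transpose $a$ past $b$ in the tensor product, and this is precisely where the sign $(-1)^{s(a)s(b)}$ arises from the commutator in $\gVect$. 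Antisymmetrizing this identity and combining with the corresponding equation obtained by expanding $(b\boxminus c)\odot a$ yields the odd Poisson rule for $\cGbracket$. Finally, all operations are $\SS$--equivariant by the definition of the nc--cyclic triple, so the identity descends to $\Op_{\SS}$.

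The main obstacle is the sign bookkeeping: the odd (degree $+1$) nature of each edge produced by the $\K$--twist interacts with the Koszul signs from both the commutativity constraint of $\gVect$ and the anti--cyclic sign conventions, and it must be checked that these combine to exactly the shifted Poisson sign $(-1)^{s(a)s(b)}$ rather than a degree--$|a||b|$ sign. As in Section~1, this is the content of the identity $s(a)s(b)+1 \equiv |a||b|+|a|+|b| \pmod 2$, and once one views the edge symbol as an odd operator (the second viewpoint in the Essential Remark on Signs) the verification becomes formal.
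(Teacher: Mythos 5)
Your argument is essentially the paper's: its proof likewise reduces the Leibniz rule to splitting the sum of single--edge gluings of $a$ into $b\boxminus c$ according to whether the new edge lands in the $T$--piece or the $U$--piece, with the odd Lie property and the descent to $\SS$--coinvariants supplied by the earlier results exactly as you say. One correction to your sketch: the sign in your displayed Leibniz identity should be the derivation sign $(-1)^{(|a|-1)|b|}=(-1)^{s(a)|b|}$, as in equation (\ref{dereq}), not the antisymmetry sign $(-1)^{s(a)s(b)}$.
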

\begin{proof}
This just boils down to the fact that on the left hand side of (\ref{dereq}),
we have a summand corresponding to connecting the root of $c$ to any element
of the set $S\amalg T$. The ones connecting to $S$ are the first term, while
the ones connecting to $T$ are the second term of the rhs.
\end{proof}

\subsubsection{Nc-modular operads}
\label{ncmodularsec}
For nc--modular operads the basic underlying triple will be non--connected
graphs. We must however deal with the genus labeling. Since the graphs
are not connected one should replace $g$ by $\-\chi$ where $\chi$ is the Euler characteristic.
 For any graph, its Euler characteristic is given by
the Euler characteristic of its realization.
Viewing it as a 1--dimensional CW complex and contracting any tails,
we get that
$$\chi(\Gamma)=b_0(|\Gamma|)-b_1(|\Gamma|)=|\text{vertices of $G$}|-|\text{internal edges
of $\Gamma$}|; $$

If $\Gamma$ is connected then $1-\chi(\Gamma)=g$.

We replace the genus labeling by the labeling by $\gamma$.
That is a function $\gamma:$ vertices of $\Gamma\to \N$.

The total $\gamma$ is now
$$\gamma(\Gamma)=1-\chi(\Gamma)+\sum_{v \text{ vertex of $\Gamma$}}\gamma(v)$$
This means we get non-self gluings $\scirct$ for which $\gamma$ is again additive
and self-gluings $\circ_{ss'}$
increasing $\gamma$ by one. There is also
the collecting together which gives a horizontal map
$\boxminus:\O(\gamma, S)\otimes
\O(\gamma',T)\to \O(\gamma+\gamma',S\amalg T)$.

The triple is now given as usual. Just as in the modular case, the multiplication in the triple
expands the vertices into graphs of the corresponding type $(Flags(v),\gamma(v))$.  The twist by $\K$ makes sense and we obtain the notion of an nc-$\K$-modular operad.  Again $\Delta$ is well defined as the sum over all self-gluings.

\begin{thm}\label{ncthm}
For an nc-$\K$-modular operad $\O$,
the sum over non-self gluings gives an odd Lie bracket $\cGbracket$ on the coinvariants (both cyclic and full)
which is Gerstenhaber for the horizontal multiplication on $\Op_{\SS}$.  The differential $\Delta$ is a BV operator for the horizontal multiplication on $\Op_{\SS}$ and its Gerstenhaber bracket is the bracket induced by $\cGbracket$.
\end{thm}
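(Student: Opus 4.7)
The plan is to reduce the theorem to a single careful bookkeeping calculation on a fixed nc--graph, namely the graph $a \boxminus b$ consisting of two disjoint corollas, and to obtain all four claims as projections of this calculation onto the appropriate components.

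First I would dispose of the structural statements that follow immediately from earlier material. The odd Lie property of $\cGbracket$ on $\Op_{\SS}$ follows from Theorem \ref{kversionthm} applied to the underlying $\K$--twisted cyclic operad one obtains by restricting to tree--type gluings (the bracket uses only single non--self edge gluings, so it does not see the genus labels or the nc--structure). Analogously, $\Delta^2=0$ follows exactly as in the $\K$--modular case of \S\ref{bvpar}: the antisymmetry relation (\ref{anticomeq}) holds in any $\K$--twisted triple of graphs with loops, since the twist by $\K$ is $Det(Edge)$ and swapping the order of two self--gluings swaps two basis edges of $Det(Edge(\Gamma))$, producing the required sign. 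The key new ingredients are the Gerstenhaber/Poisson compatibility with $\boxminus$ and the BV identity.

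Next I would handle the Gerstenhaber property. Given representatives $a\in \O(\gamma,S)$, $b\in \O(\gamma',T)$, $c\in \O(\gamma'',U)$, I expand $\{a\bullet (b\boxminus c)\}$ as a sum of single--edge gluings. By the very definition of the triple for nc--$\K$--modular operads, a single non--self edge gluing between the corolla of $a$ and the merged corolla of $b\boxminus c$ factors uniquely as a gluing between $a$ and exactly one of $b$ or $c$, leaving the other factor untouched. Grouping these two types of contributions gives
\[
\{a\bullet (b\boxminus c)\}=\{a\bullet b\}\boxminus c+(-1)^{|a||b|}\,b\boxminus\{a\bullet c\},
\]
which is the odd Leibniz rule, and hence after passing to coinvariants the bracket is Gerstenhaber in the sense of the Appendix. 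This is exactly the same pattern as the proof given for odd nc--cyclic operads, now also allowing higher genus.

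The main technical step, and the one I expect to be the principal obstacle, is the BV identity
\[
\Delta(a\boxminus b)=\Delta(a)\boxminus b+(-1)^{|a|}a\boxminus \Delta(b)+\{a\bullet b\}.
\]
To verify it, I would enumerate the self--gluings of the two--vertex nc--graph $a\boxminus b$: each such gluing pairs two distinct flags of $S\amalg T$. The pairs split into three classes --- both flags in $S$, both in $T$, or one in each. The first two classes contribute $\Delta(a)\boxminus b$ and $\pm a\boxminus\Delta(b)$ respectively. Here is precisely where the signs need care: I need to confront the $\K$--twist for the two--vertex graph with a single internal edge against the $\K$--twist for the two one--vertex graphs, using the compatibility axioms of a hyperoperadic twist and Lemma \ref{twistedcomplem} to identify $\circ_{ord,\Gamma}$ unambiguously. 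The cross--class pairings give exactly the single--edge non--self gluings between $a$ and $b$, and summing them reproduces $\{a\bullet b\}$, in particular since a cross--gluing converts a merger into a connected edge and so shifts the $\K$--degree by exactly one. Once the signs match, the BV identity holds before passing to coinvariants, and descends to $\Op_{\SS}$.

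Finally, the last statement --- that the derived BV bracket of $\Delta$ coincides with $\cGbracket$ --- is an immediate algebraic consequence of the BV identity established above: the BV bracket is defined as the obstruction to $\Delta$ being a derivation of $\boxminus$, and by the displayed identity that obstruction is $\{a\bullet b\}$ on the nose. Thus the only real work is the sign verification in the BV identity, which is localized on the two--corolla graph and controlled by the edge--determinant twist $\K$.
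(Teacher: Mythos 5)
Your proposal is correct and follows essentially the same route as the paper's own proof: the paper likewise reduces the whole theorem to the decomposition of $\Delta(a\boxminus b)$ into the three classes of gluings (self--gluings of $a$, self--gluings of $b$, and non--self gluings between $a$ and $b$, cf.\ Figure \ref{bvcalcfig}), with the caveat that one must pass to coinvariants for the Jacobi identity, exactly as you note. The only point to correct is that your displayed Leibniz and BV identities carry the wrong signs --- the Leibniz sign should be $(-1)^{(|a|-1)|b|}$ and the bracket term in the BV identity acquires a factor $(-1)^{|a|}$, as in equations (\ref{dereq}) and (\ref{poissoneq}) of the Appendix, the shifted degree appearing precisely because the bracket is odd.
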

\begin{proof}
The proof can either be done by direct calculation or by
the following argument which is essentially an adaption of that of \cite{HVZ}.
If we look at the equation (\ref{poissoneq}) then taking $\Delta(a\boxminus b)$
decomposes into three terms. All self--gluings of $a$, all self--gluings of $b$
and all non-self gluings between $a$ and $b$, which if one is careful
with the signs give all the gluings.
A pictorial representation is given in Figure \ref{bvcalcfig}.
Again one has to be careful that one uses  coinvariants,
which is where $\cGbracket$ satisfies the Jacobi identity.
\end{proof}

\begin{figure}
\includegraphics[width=\textwidth]{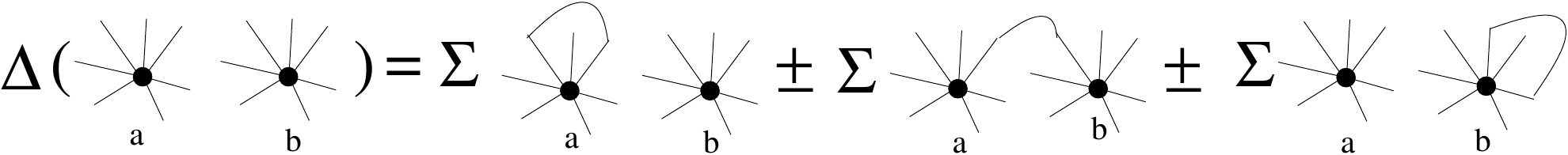}
\caption{\label{bvcalcfig}The three terms for checking the BV property}
\end{figure}

\subsubsection{Nc-extension}
Just like there is the PROP generated by an operad, a cyclic or  (twisted) modular operad
generates an nc--version.  Here the operation $\boxminus$ is just taken to be $\otimes$ and one sets
\begin{equation}\O^{nc}((\gamma,n))=\bigoplus_k\bigotimes_{\begin{tabular}{ll}$(n_1,\dots, n_k):\sum_i n_i=n$\\
$(g_1,\dots, g_k):\sum 1-g_i=\gamma$\end{tabular}}\O((n_i,g_i))
\end{equation}

\subsection{$\K$-twisted Realization of the Mantra}
We can now formulate mantra (3) in this context.

\begin{thm}
For the nc-versions of odd cyclic operads and $\K$--twisted modular operads as
well as for $\K$--twisted version of wheeled PROPs
the operator $\Delta$ is a BV operator on the
coinvariants which induces the previously constructed Gerstenhaber bracket.
\end{thm}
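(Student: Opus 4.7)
The plan is to observe that this theorem is essentially a synthesis of results already established in the preceding development, and to verify each of the three cases by invoking the appropriate previously proved statement.

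For the nc-$\K$-modular case the statement is literally Theorem \ref{ncthm}. Its proof decomposes $\Delta(a\boxminus b)$ into three pieces according to where the self-gluing occurs---both flags on $a$, both flags on $b$, or one flag on each---and matches these with $\Delta(a)\boxminus b$, $\pm a\boxminus \Delta(b)$, and the bracket term arising from the non-self gluings, as summarized pictorially in Figure \ref{bvcalcfig}. The crucial input is that on coinvariants the $\K$-twist supplies exactly the signs needed for $\cGbracket$ to satisfy Jacobi and for $\Delta^{2}=0$ (via the identity $\bullet_{ss'}\bullet_{tt'}=-\bullet_{tt'}\bullet_{ss'}$ of equation (\ref{anticomeq})).

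For the $\K$-wheeled PROP case the argument is already carried out in Section \ref{BVproppar}. There Lemma \ref{oddselflem} yields $\Delta^{2}=0$ from the same odd-gluing antisymmetry, and the decomposition of $\Delta(a\boxminus b)$ into the four types of gluings (self-gluings internal to $a$, self-gluings internal to $b$, gluings from outputs of $a$ to inputs of $b$, and gluings from outputs of $b$ to inputs of $a$) produces the BV seven-term identity on the nose, with the last two contributions assembling into $\{a\bullet b\}$. So nothing new is required here.

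The only case meriting additional comment is nc-odd cyclic operads. The underlying graphs for this triple are forests of unrooted trees; in particular no loop edges arise, so there are no self-gluings and $\Delta$ is identically zero. Thus $\Delta^{2}=0$ and $\Delta(a\boxminus b)=0=\Delta(a)\boxminus b\pm a\boxminus\Delta(b)$ hold trivially, and the BV compatibility with the bracket collapses to the assertion that $\cGbracket$ is a $\boxminus$-derivation, which is precisely the Gerstenhaber theorem for nc-odd cyclic operads established earlier in \S\ref{NCpar}.

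The main conceptual point to verify---and the one step that is not purely a bookkeeping citation---is that these three situations are genuinely instances of a single $\K$-twisted mechanism, so that the signs, degrees, and coinvariant identifications line up uniformly. This is guaranteed by the isomorphisms of twists collected in Lemma \ref{twistisolem} and Table \ref{oddtable}, which identify the defining twist $\D_{\Sigma s}$ of odd cyclic operads and the twist $\D_{s_{\mathrm{out}}}$ of odd wheeled PROPs with $\K$ in each setting. With these identifications in hand, the Gerstenhaber brackets constructed by each of the different routes coincide with the bracket induced by $\Delta$ via the standard formula $\{a\bullet b\}=\Delta(a\boxminus b)-\Delta(a)\boxminus b-(-1)^{|a|}a\boxminus\Delta(b)$, completing the proof.
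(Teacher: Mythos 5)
Your reading of the theorem as a synthesis of previously established results is exactly how the paper treats it: no separate proof is given there, and your citations for the nc--$\K$--modular case (Theorem \ref{ncthm} with the three--fold decomposition of $\Delta(a\boxminus b)$ pictured in Figure \ref{bvcalcfig}) and for the $\K$--wheeled PROP case (Lemma \ref{oddselflem} together with the four--fold decomposition of $\Delta(a\boxminus b)$ in \S\ref{BVproppar}) are precisely the paper's ingredients, as is the appeal to Lemma \ref{twistisolem} to see all cases as instances of the single $\K$--twisted mechanism.

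The one place where your argument does not hold together is the nc--odd--cyclic case. You are right that the relevant triple is built on forests, so there are no self--gluings and $\Delta$ vanishes identically. But you then assert that the clause ``$\Delta$ induces the previously constructed Gerstenhaber bracket'' collapses to the statement that $\cGbracket$ is a derivation of $\boxminus$. That does not follow: the bracket induced by a BV operator via equation (\ref{poissoneq}) is identically zero when $\Delta=0$, and so cannot coincide with $\cGbracket$, which is nonzero in general. The derivation property is a separate, already--established fact (the nc--cyclic theorem of \S\ref{NCpar}), not a degenerate instance of the BV--induced--bracket identity. The honest conclusion for this case is that only the Gerstenhaber statement survives and the BV clause is vacuous (or, read literally, false); the theorem's phrasing is loose here --- note that the earlier theorem in \S\ref{bvpar} lists $\Delta$ only for $\K$--modular operads, wheeled PROP(erad)s and EMOs, not for cyclic ones --- and your write--up should flag that explicitly rather than paper over it with the ``collapse''. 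Everything else in the proposal is correct and matches the paper.
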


\subsection{Non-free nc-extensions: (Cyclic) Operads with multiplication}
\label{intmultpar}
There are basically two ways to get an nc--multiplication for (cyclic) operads, the first is the free one, which we discussed above and the second is
using an internal multiplication which we discuss now.

Let $\mu\in \O(2)$, s.t.\ $\mu\circ_1\mu=\mu\circ_2\mu$.
An operad together with such an element is called an
operad with multiplication.
Indeed on $\Op$, $\mu$ defines a graded associative multiplication via
$a\otimes b\mapsto (\mu \circ_2b)\circ_1a$.

Such an element also gives rise to a differential $da:=\{a\bullet \mu\}$. The following theorem can be extracted from \cite{Gerst}, see e.g.\
\cite{woods}.

\begin{thm}
For an operad with multiplication the odd bracket $\Gbracket$ is odd Poisson, aka.\ Gerstenhaber, up to homotopy; that is the equations hold up to $im (d)$.
\end{thm}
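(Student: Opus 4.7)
The plan is to adapt Gerstenhaber's original argument, using brace operations on $\Op$. First I would verify that $d$ is a differential and an odd derivation of $\Gbracket$. For $d^2=0$: by definition $d(a)=\{a\bullet\mu\}$, so the odd Jacobi identity together with odd anti-symmetry gives
\begin{equation*}
d^2(a)\;=\;\{\{a\bullet\mu\}\bullet\mu\}\;=\;\pm\tfrac{1}{2}\{a\bullet\{\mu\bullet\mu\}\},
\end{equation*}
and $\{\mu\bullet\mu\}=2(\mu\circ_1\mu-\mu\circ_2\mu)=0$ by the defining axiom on $\mu$. The odd derivation property $d\{a\bullet b\}=\{da\bullet b\}\pm\{a\bullet db\}$ is again immediate from odd Jacobi applied to $\{\{a\bullet b\}\bullet\mu\}$.

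For the two Gerstenhaber identities proper I would bring in the brace operations
\begin{equation*}
a\{b\}\;:=\;a\bullet b,\qquad a\{b,c\}\;:=\;\sum_{1\le i<j\le\arity(a)}\!\!\pm\,(a\bullet_i b)\bullet_{j+\arity(b)-1}c,
\end{equation*}
with signs forced by the odd--associativity relations \eqref{oddassoc}. Direct expansion of $d(a\{b\})$ and $d(a\{b,c\})$ using $d(-)=\{-\bullet\mu\}$ and \eqref{oddassoc} yields two homotopy formulas, the first witnessing graded commutativity of the cup product, the second the Leibniz rule for the bracket:
\begin{align*}
a\cdot b-(-1)^{|a||b|}b\cdot a&\;=\;d(a\{b\})\,\pm\,(da)\{b\}\,\pm\,a\{db\},\\
\{a\bullet(b\cdot c)\}-\{a\bullet b\}\cdot c-(-1)^{s(a)s(b)}b\cdot\{a\bullet c\}&\;=\;d(a\{b,c\})\,+\,R(a,b,c),
\end{align*}
where $R(a,b,c)$ collects the terms $(da)\{b,c\}$, $a\{db,c\}$, $a\{b,dc\}$. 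The intermediate contributions produced by inserting $\mu$ into $a$ at slots strictly between the insertion points of $b$ and $c$ cancel pairwise, so only the stated defect survives on the left. Restricting to $d$--cocycles kills $R$, and passing to $d$--cohomology kills the $d$--exact terms, so the induced operations on $H^*(\Op,d)$ satisfy the Gerstenhaber axioms on the nose --- this is the precise meaning of ``up to homotopy'' here.

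The main obstacle will be sign bookkeeping. Three gradings coexist throughout the calculation: the internal $|\cdot|$, the shifted $s(\cdot)=|\cdot|-1$, and the arity shift implicit in the passage $\circ_i\to\bullet_i$. The signs produced by \eqref{oddassoc} when one reassociates $(a\bullet_i b)\bullet_{j'}\mu$ into $a\bullet_i(b\bullet_{j''}\mu)$ and then applies odd anti--symmetry must match, on the nose, the signs demanded by the Gerstenhaber identity; this is the verification that does all the work. The computation is formal in the $\circ_i$ and in $\mu$, so once it is carried out on the biased level it goes through for any operad with multiplication, and $\Sn$--equivariance of the $\bullet_i$ pushes the statements down to $\OSp$.
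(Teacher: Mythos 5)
The paper gives no proof of this statement, deferring entirely to the citation \cite{Gerst}; your argument is a correct reconstruction of that classical proof, with the Leibniz--rule homotopy packaged in the language of the second brace operation (in the spirit of Gerstenhaber--Voronov) rather than Gerstenhaber's original hands-on computation, which is an equivalent and if anything cleaner organization. The key verifications --- $\{\mu\bullet\mu\}=2(\mu\circ_1\mu-\mu\circ_2\mu)=0$ giving $d^2=0$, the first brace $a\{b\}=a\bullet b$ as the homotopy for graded commutativity of the cup product, and the second brace $a\{b,c\}$ as the homotopy for the Leibniz rule --- are all sound, the point that these braces are defined purely from the $\circ_i$ and hence exist in any operad with multiplication is exactly what makes the formal computation go through, and the descent to $\OSp$ via $\SS$--equivariance is as the paper intends.
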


Indeed the required equation for $\mu$ to be an nc--multiplication is (\ref{ncmulteq}).

In the cyclic situation for an operad with a unit $1\in \O(0)$ for $\mu$,
one can define degeneracy maps via $s_i(a):=a\circ_i 1$.  Then one can define the operator $B=s(1-t)N$ on the complex $\Op$ with
the differential $d$ (or the sum of the internal differential and $d$).
On  the reduced
complex is just $sN$. The calculation in \cite{cyclic}
shows that
\begin{thm}
For a cyclic operad,
$B$ is a differential on the reduced complex and descends to a
BV operator for $\mu$ on the cohomology.
Moreover the induced bracket agrees
with the one coming from the Gerstenhaber structure.
\end{thm}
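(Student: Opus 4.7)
The plan is to treat $\Op$ as a cyclic module with Hochschild-type coboundary and invoke Connes' classical machinery, reading off the additional BV-operator property from the operadic input.

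First I would verify that the data consisting of the $\SS_{n+}$-action (giving $T, N$), the degeneracies $s_i(a) = a \circ_i 1$, and the Hochschild-type face maps $\delta_i(a) := a \circ_i \mu$ (whose alternating sum is equivalent to $d = \{\cdot\,\bullet\,\mu\}$) satisfy Connes' cyclic identities on the reduced complex. This reduces to a direct check using (i) the cyclic operad axiom $T(a\circ_1 b) = (-1)^{|a||b|} T(b) \circ_{\arity(b)} T(a)$, (ii) the associativity $\mu\circ_1\mu=\mu\circ_2\mu$, and (iii) the two-sided unit property of $1 \in \O(0)$ for $\mu$. Once Connes' identities hold, both $B^2 = 0$ and $Bd + dB = 0$ on the reduced complex are purely formal consequences valid in any cyclic module, so $B$ is a differential and descends to $H^*(\Op, d)$.

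For the BV property I would compute
\begin{equation}
B(a \cdot b) - B(a) \cdot b - (-1)^{|a|}\, a \cdot B(b)
\end{equation}
directly, where $a\cdot b = (\mu\circ_2 b)\circ_1 a$ is the induced cup product. Expanding $B = s(1-t)N$, the cyclic sum $N$ applied to $a\cdot b$ enumerates every cyclic interleaving of the inputs of $a$ with those of $b$ through $\mu$. The operator $s(1-t)$ selects precisely those arrangements in which the inserted unit $1$ lands at the seam between $a$ and $b$; the unit axiom then collapses each such term into an honest operadic insertion $a\circ_i b$ or $\pm\,b\circ_j a$, and summing over all seam positions gives the Gerstenhaber bracket $\Gbracket$. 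The remaining arrangements, where $1$ is inserted strictly inside the block of $a$ or of $b$, cancel in pairs or assemble into terms of the form $\{c\,\bullet\,\mu\} = dc$ by associativity of $\mu$, hence vanish on cohomology.

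The main obstacle is the sign- and index-bookkeeping in this second step: one must simultaneously track the Koszul signs from the shift implicit in defining $s$ (cf.\ the essential remark on signs in Section 1), the cyclic signs from moving $T^k$ past the product $\mu$, and the reindexing of composition slots after each insertion of the unit. This is precisely the calculation carried out in \cite{cyclic}, and on granting that verification, the bracket induced by $\Delta = [B]$ on $H^*(\Op, d)$ coincides by construction with the Gerstenhaber bracket on the underlying operad, completing the proof.
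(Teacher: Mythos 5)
The paper offers no argument of its own here: the theorem is stated as a consequence of ``the calculation in \cite{cyclic}'', so your proposal is being measured against a citation rather than a written proof. Your outline is the standard route and is consistent with what that reference establishes: exhibit $\Op$ as a cyclic (co)module using $T$, the degeneracies $s_i(a)=a\circ_i 1$ and the multiplication, deduce $B^2=0$ and $[B,d]=0$ formally from Connes' identities, and then compute the deviation of $B$ from being a derivation of the cup product. Two points deserve care. First, your parenthetical identifying the alternating sum of the faces $\delta_i(a)=a\circ_i\mu$ with $d=\{\cdot\,\bullet\,\mu\}$ omits the two outer faces $\mu\circ_1 a$ and $\mu\circ_2 a$; without them the alternating sum is $a\bullet\mu$, not the full bracket with $\mu$, and the cyclic-module axioms genuinely need all $n+2$ faces. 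Second, the entire content of the BV statement is your claim that the ``non-seam'' terms cancel or assemble into coboundaries: that is the explicit chain homotopy, and it is precisely what cannot be waved at --- in \cite{cyclic} it is not obtained by direct index bookkeeping but by constructing a cell-level action of a cacti-type chain model of the framed little discs operad, so that the BV relation is inherited from $H_*$ of framed little discs rather than verified term by term. Your direct Connes-complex computation is a legitimate alternative (it appears in work of Menichi and Tradler in the Frobenius-algebra case), and is more elementary in that it avoids the topological model, but the signs you defer are the whole difficulty; since you ultimately grant them by citing \cite{cyclic}, your argument and the paper's rest on the same external verification.
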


This type of BV operator is internal and has {\it a priori} nothing
to do with the external
$\Delta$ we considered above. They also yield different
Gerstenhaber brackets, namely $\Gbracket$ and $\cGbracket$.

Thus taking coinvariants, they are related {\it a posteriori}.
Moreover if $\mu$ is cyclic, then the gluing can be thought of
as composing both elements with $\mu$ and putting in a co--unit.
The precise relationship and interplay between the two BV formalisms is an
interesting open problem.

\section{(Co)bar constructions, the Feynman transform and the master equation}\label{feynmanpar}

The bar-cobar constructions are used to give cofibrant (quasi-free) resolutions of operads\cite{GiK}.  These constructions have been generalized e.g. to cyclic operads \cite{GeK1}, dioperads \cite{Gan}, properads \cite{Val}, wheeled properads \cite{MMS}.  On the other hand the notion of the Feynman transform plays a similar role in the category of modular operads, \cite{GK}.  Here one encounters the added complication of passing to the $\fr{K}$-twisted triple; consequently the Feynman transform of a modular operad is no longer a modular operad.

With hindsight, and with the above work, one sees that the Feynman transform is the more general, unifying notion.  The discrepency in the level of complication arises from the fact that in the various contexts the twist $\fr{K}$ may or may not arise from a coboundary.  In this section we define a general Feynman transform which captures the above constructions.  Roughly speaking, an algebra over a triple is replaced by a quasi-free algebra over the $\fr{K}$ twisted triple.  The failure of freeness is exactly measured in its various guises
by the master equations associated to the Lie, Gerstenhaber, and BV operations constructed above.

In this section we work in the category $\dgVect$.

\subsection{(Co)bar construction aka Feynman transform}\label{mepar}  For this discussion we fix a triple $\T$ encoding one of the operad-like structures above. Examples are: operads, cyclic operads, their non-$\Sigma$ variants, modular operads, dioperads, wheeled properads, wheeled props, also their twisted versions.  The Feynman transform is a functor
\begin{equation*}
\F\colon \T\text{-algebras} \to \T_\fr{K}\text{-algebras}
\end{equation*}
defined by
\begin{equation*}
\F(\O):=(F_{\fr{K}}(\fr{s}\op{O}^* ), d\F)
\end{equation*}
where $\op{O}^*$ is the $\SS$-module given by the linear dual of $G(\O)$ and where $F_{\fr{K}}$ is the free algebra over $\T_{\fr{K}}$.

Taking linear duals gives a differential that is dual to the composition given by contracting an edge.  More precisely the total differential $d\F$ on $\F(O)$ is the sum $d\F := \del_{\O^*} +\del$ where $\del_{\O^*}$ is the internal
differential induced from the differential on the $\O$, and $\del$ is a new external differential
whose value on the term $(\K(\Gamma)\otimes \O^*(\Gamma))_{Aut(\Gamma)}$ is
given as follows. Consider  $\hat \Gamma$ together with an edge $e$ such that $\hat\Gamma/e\simeq \Gamma$.
Then  there is a map $\circ_e: \O(\hat \Gamma)\to \O(\Gamma)$ which
composes along $e$. Since $\O$ is an algebra over $\T$ for such a pair there is a map
\begin{equation}
\del_{\hat G,e}: \K(\Gamma)\otimes \O^*(\Gamma)
\stackrel{\eps_e\otimes \circ_e^*}{\longrightarrow} \K(\hat \Gamma)\otimes \O^*(\hat \Gamma)\\
\end{equation}
where $\eps_e$ is the multiplication by the  basis element $[e]$ of $Det(\{e\})$.
Now the matrix element $\del$ between $ (\K(\hat\Gamma)\otimes \O^*(\hat\Gamma))_{Aut(\hat\Gamma)}$
and  $(\K(\Gamma)\otimes \O^*(\Gamma))_{Aut(\Gamma)}$
is the sum over all $\del_{\hat \Gamma,e}$ for which $\hat \Gamma/e\simeq \Gamma$.
If there is no such edge, then the matrix element is zero.

The reason to introduce the twist by $\K$ into the picture is to make $\del$ into a differential.
Indeed applying it twice inserts two edges in all possible ways and each term appears twice:
once with each possible ordering of the two edges. Due to the presence of the tensor factor
$Det(Edges)$ these terms differ by a minus sign and cancel.

One may observe that this definition of Feynman transform agrees with the bar-cobar constructions mentioned above, modulo coboundaries (and modulo taking linear duals depending on one's conventions).

\begin{rmk}  Since $\fr{K}^{\tensor 2}$ is a coboundary, we may consider double iteration of the Feynman transform to be an endofunctor.  The result of this endofunctor is always quasi-free, and is often a resolution.  However, in the cases allowing non-connected graphs we do not resolve the horizontal composition and hence can not expect a resolution.
\end{rmk}

\subsection{Algebras over the Feynman transform and Master Equations.}
One has to distinguish: as a graded object $\F(\O)$ is free but as a {\it differential} graded object it is not.  As mentioned above this discrepency is captured by a so-called master equation  in the various contexts.  These master equations use the brackets and BV operators defined above.  We will now briefly recall the relevant operations and give the master equations in the various contexts.  Before doing so we mention two technical points.  First the solutions to the master equation live in the direct product not the direct sum, i.e. we can have infinitely many non-zero components.  To this end we define $\oprod$ to be the direct product of the respective spaces of coinvariants.  It is straight-forward to check that the algebraic operations defined {\it a priori} on the direct sum extend to the product (see \cite{Ward2}).  Second, the degree of all master equation solutions is even, but the exact degree depends on conventions for differentials and brackets.  To be concrete, let us assume that our brackets and differentials have degree $-1$, and thus we stipulate that all master equation solutions have degree $0$.  With that the master equations are:

{\bf Operads}.  Let $\op{O}$ be a dg odd operad.  Then $\oprod$ is an odd pre-Lie algebra, where the operation $\circ$ was defined in Section $\ref{opsec}$.  An element $S\in \oprod$ is a solution to the master equation if
\begin{equation*}
\partial_\op{O}(S)+S\circ S=0
\end{equation*}

{\bf Cyclic operads and dioperads.}  Let $\op{O}$ be a dg odd cyclic or di- operad .  Then $\oprod$ is an odd Lie algebra, with bracket as defined in sections $\ref{cycsec}$ and $\ref{disec}$.  An element $S\in \oprod$ is a solution to the master equation if
\begin{equation*}
\partial_\op{O}(S)+\frac{1}{2}\{S, S\}=0
\end{equation*}

{\bf Wheeled properads and modular operads.}  Let $\op{O}$ be a dg odd wheeled properad or modular operads.  Then $(\oprod, \{-,-\}, \Delta)$ is a dg Lie algebra, where the operations were defined in sections $\ref{disec}$ and $\ref{gbvsec}$.    An element $S\in \oprod$ is a solution to the master equation if
\begin{equation*}
\partial_\op{O}(S)+\frac{1}{2}\{S, S\}+\Delta(S)=0
\end{equation*}

Here in the wheeled prop case $\Delta$ is a BV operator; where as in the connected cases there is no multiplication.  In general, in each context the master equation is the same in the connected and nc versions, see $\ref{ncgensec}$.

\subsubsection{Realization of Mantra (4).}  Now we let $\P$ be an algebra over a triple $\T$ enconding one of the above structures and let $\O$ be an algebra over the twisted triple $\T_\fr{K}$.  Recall that this implies $\O\tensor \P$ is then also an algebra over $\T_\fr{K}$.  In all cases we have:

\begin{thm}\label{r8}
There is a natural bijective corresondence between $Hom_{dg}(\F(\O),\P)$ and solutions to the master equation in $(\O\otimes \P)^{_{\prod}}_{\SS}$.
\end{thm}

For operads this result is classical; see e.g. \cite{MSS} for a discussion.  For modular operads this result is due to Barannikov \cite{Ba}.  For wheeled properads an example of this result is given as Theorem 3.4.3 of \cite{MMS} (for the case $\O$ having the ground field in each bi-arity).  For cyclic operads a version of this result is used in \cite{Ward2}.  There are two ways to prove this theorem.  The first is to prove each case individually, whereas the second is to build a framework general enough so that the series of statements that comprise this theorem becomes a single statement.  The latter is done in our subsequent work \cite{feynman}.  In either case the proof is essentially an unraveling of definitions.

\begin{rmk}  A particularly important case of the above theorem is when $\P$ is the endomorphism operad, so that such master equation solutions parameterize $\F(\O)$-algebra structures.  This is particularly relevant of $\O$ is Koszul in which case $\F(\O)$-algebra structures are equivalent to strongly homotopy algebras structures over the Koszul dual.
\end{rmk}

\begin{rmk}
Including properads in the above list would essentially recover the dioperadic construction of \cite{Gan} and not the resolution of \cite{Val} (which uses a suitably altered bracket). We defer that discussion to \cite{feynman}, where we introduce transforms depending on a fixed set of generators.
\end{rmk}

\subsubsection{NC-generalization}\label{ncgensec}
In the nc extension of the above situation the master equations remain the same.  For example in the case of modular operads such solutions are also exactly the solutions of
\begin{equation}\label{exponentialeq}
(d+\lambda\Delta )e^S=0
\end{equation}
Here the exponential is formal for the product given by $\boxminus$.
This is in accordance with quantum field theory, where the exponential
gives the sum over all not--necessarily connected Feynman graphs.

For the quantum master equation, we never want to resolve the horizontal composition. This operation
yields the multiplication for the Gerstenhaber/BV structure and is inherent in the definition of $e^S$ which is the physically
relevant exponentiated action \cite{Zwie,KontSchw}.

\section{Geometric examples}\label{toppar}
In this section we give some geometric examples which lead to occurrences
of mantra (5).
There are basically two kinds: open and closed.
These are motivated by the constructions of \cite{HVZ} and
\cite{KSV}, and ultimately by \cite{Zwie}.
Informally speaking the common feature of the following closed
examples is an $S^1$--action on
the outputs, which can be transferred to a twist gluing.
Such a twist gluing will be an $S^1$ family.
Passing to homology or chains this 1--parameter family gives  degree $1$ to the gluing making the gluing odd.

The other type of gluing is a gluing at boundary punctures. In order
for it to be odd one must consider orientations and for it to get
degree one, one has to pick a grading by codimension as we explain
below. The paradigm for this is contained in \cite{HVZ}, but
was previously also inherently present in Stasheff's associahedra
and more recently in \cite{KSchw} for the Gerstenhaber operad.

\subsection{Topological $\Sn\wr S^1$ modular operads}

Suppose we have a topological modular operad $\CO$. We also assume that $\CO((g,n))$ has an $(S^1)^{\times n}$ action which together with the $\Sn$ action gives an action of $\Sn\wr S^1$.
For $\phi\in S^1=\R/\Z$ let $\rho_i(\phi)a= (0,\dots,0,\phi,0,\dots)(a)$ where the non--zero entry is in the $i$--th place.

\begin{df}
A topological $S^1$--modular operad is  a modular operad $\CO$ with an $\SS\wr S^1$ action that is {\em balanced} which means
that
\begin{equation}
\rho_i(\phi)(a)\ccirc{i}{j} b = a \ccirc{i}{j}\rho_j(-\phi)(b) \text{ and } \circ_i^j \rho_i(\phi(a))= \circ_i^j (\rho_j(-\phi( a))
\end{equation}

Likewise we define the $S^1$--twisted versions of (cyclic) (twisted) operads and (wheeled) (twisted) PROP(erads) or also di--operads, etc.
\end{df}

\begin{nota}
To shorten the statements, we will call any $\O$ belonging to any of the categories in the previous sentence of {\em composition type}.
\end{nota}

\begin{df} The twist gluing $\Sccirc{i}{j}$ of $a$ and $b$ is the $S^1$ family given by
$\rho_i(S^1)a\ccirc{i}{j}b$
\end{df}

This type of twist gluing does not give a nice operad type structure on the topological level, unless as suggested by Voronov, one uses
the category of suitable spaces with correspondences as morphisms. It does however give nice operations on singular chains
and hence on homology.

Namely, given two chains $\alpha\in S_k(\O(n))$ and $\beta\in S_l(\O(m))$ we define the chains
\begin{equation}
\alpha\cbullet{i}{j} \beta:=S_*(\ccirc{i}{j})EZ\, S_*( id\times \rho_j)  (\alpha \times \rho_i  \times \beta)
\end{equation}
as chains parameterized over $\Delta^k\times \Delta^1\times \Delta^l$ pushed forward with $\rho_j$ and the Eilenberg Zilber map
to give a chain in $S_{k+l+1}(\O(n)\times S^1\times\O(m))$. Here $\Delta^1$ maps
to the fundamental class $[S^1]$.
Likewise we define
\begin{equation}
\bullet_{ij}\alpha:=S_*(\circ_{ij})S_*(\rho_{i})([S^1] \times \alpha)
\end{equation}

This type of operation of course generalizes and restricts to all $\O$ of composition type.

\begin{thm}
The chain and homology  of any  $S^1$--twisted  $\O$ of composition type are  $\K$--twisted versions of that type.
\end{thm}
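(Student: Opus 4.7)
The plan is to unravel the definition of a $\K$--twisted structure via Lemma \ref{twistedcomplem} and check that the chain--level operations $\cbullet{i}{j}$ and $\bullet_{ij}$ assemble into precisely such a structure, with the sign data of $\K(\Gamma)=Det(Edge(\Gamma))$ provided by the orientation of the $S^1$ fundamental class factors. First I would verify the degree bookkeeping: a single edge gluing takes chains in degrees $k$ and $l$ to a chain in degree $k+l+1$, because an extra factor $\rho_i(\Delta^1)$ (mapping to the fundamental class $[S^1]$) is inserted by the Eilenberg--Zilber map. Self--gluings also raise degree by one for the same reason. This matches the fact that $\K(\Gamma)$ sits in degree $-|E(\Gamma)|$ and hence each edge insertion contributes a shift by one in the appropriate direction, consistent with Remark~\ref{ksqrmk}.

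Next I would verify equivariance. The balanced condition $\rho_i(\phi)a\ccirc{i}{j}b=a\ccirc{i}{j}\rho_j(-\phi)b$ ensures that the $S^1$--parametrized family defining $\cbullet{i}{j}$ is independent (up to reparametrization of $S^1$) of whether we rotate on the $i$--side or the $j$--side, so the operation is well defined after passing to singular chains modulo the standard degeneracies. Combined with the $\Sn$--equivariance inherited from $\O$, this gives the appropriate symmetric group equivariance required for an algebra over the relevant (wreath-product) triple, twisted by a one-dimensional line per edge.

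The heart of the proof, and where the $\K$--twist genuinely enters, is checking the associativity/commutativity relations with the correct signs. For a graph $\Gamma$ with edges $e_1,\dots,e_r$, the composition $\circ_{\Gamma}$ is built by iterating the single--edge operations above; the resulting chain is pushed forward from a product space carrying a factor $[S^1]^{\times r}$, one for each edge. Reordering the edges $e_i\leftrightarrow e_{i+1}$ corresponds to swapping two consecutive $[S^1]$ factors in the Eilenberg--Zilber shuffle, which produces a sign $(-1)^{1\cdot 1}=-1$ because $[S^1]$ is an odd-degree class. This is precisely the sign prescribed by $Det(Edge(\Gamma))$, so in the language of Lemma~\ref{twistedcomplem}, we obtain a well defined composition $\circ_{ord,\Gamma}$ for each edge ordering $ord\in\K(\Gamma)$, and a change of ordering alters the composition by exactly the sign in $\K(\Gamma)$. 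In particular the anti-commutativity of Lemma~\ref{oddselflem} and equation (\ref{anticomeq}) for self--gluings falls out of swapping two $[S^1]$ factors.

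Finally, I would compare with the triple description of whichever flavor of structure $\O$ belongs to (operad, cyclic, modular, (wheeled) PROP(erad), dioperad, \ldots) by induction on the number of edges of $\Gamma$: both sides are built by iteration of the single--edge operations, the degree shift per edge agrees, and the ambiguity in edge ordering is governed by the same line $\K(\Gamma)$. This identifies $S_*(\O)$ as an algebra over the $\K$--twisted triple of the same type, and passing to homology preserves this since $\K$ is a purely combinatorial twist. The main obstacle is the sign bookkeeping in the Eilenberg--Zilber shuffle when several gluings are performed in different orders; but this is exactly the calculation that identifies the ordering ambiguity with the generator $[e_1]\wedge\cdots\wedge[e_r]$ of $Det(Edge(\Gamma))$, so no further input beyond the balanced condition and the Koszul sign rule is needed.
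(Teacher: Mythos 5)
Your proposal is correct and takes essentially the same route as the paper: the paper's own proof is a two--sentence observation that the compositions along graphs of the triple acquire an edge decoration by the fundamental class $[S^1]$, which lives in degree $1$ and hence realizes exactly the twist $\K(\Gamma)=Det(Edge(\Gamma))$. Your elaboration of the degree count, the role of the balanced condition, and the identification of the edge--reordering sign with the generator of $Det(Edge(\Gamma))$ via the Eilenberg--Zilber shuffle is a faithful (and welcome) unpacking of that argument rather than a different one.
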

\begin{proof}
We see that the compositions are along the graphs of the triple, where the edges are now decorated by the fundamental class of $S^1$.
This lives in degree $1$ and hence the compositions get degree $+1$. If we now shift the source of the morphisms by $-1$
we get operations of degree $0$ and hence we get composition morphisms for the $\K$ twist of $\O(\Gamma)$. \end{proof}

\subsubsection{New examples and applications: $\Arc$, framed little discs and string topology}

One example is given by  the $\mathcal {A}rc$ operad of \cite{KLP}, which has such a balanced $S^1$ action. The twist gluing and BV operator are discussed
in \cite{archbk}.
 The $\Arc$ operad  contains the well known operad of framed
little discs \cite{cact} which is  a cyclic $S^1$ operad.

A rigorous topological  version of the Sullivan PROP was given
 in \cite{hoch1}
this structure is actually a quasi--PROP which is only
associative up to homotopy, but it has a cellular PROP chain model.
Just like in the $\Arc$ operad there is an action of $S^1$ on the inputs,
as these are fixed to have arcs incident to them.
Thus we can twist glue by gluing in the $S^1$ families.

\subsubsection{Co--invariants }
Given an $S^1$--twisted $\O$ of composition type, we can consider its $S^1$--coinvariants.
For concreteness we will treat modular operads, the other types work analogously.
Here $\O_{S^1}((g,n)):=\O((g,n))_{(S^1)^{\times n}}$.
Let $[\;]:\O\to \O_{S^1}$ denote the projection.

Then the twist gluings provide a natural family of gluings on the coinvariants:
Namely if $[\alpha]$ and $[\beta]$ are two classes in the coinvariants,
we can set
\begin{equation}[\alpha]\cbullet{i}{j}[\beta]:=[\alpha \ccirc{i}{j}^{S^1}\beta] \quad \bullet_{ij}([\alpha]):=[\circ^{S_1}_{ij}\alpha]
\end{equation}
\begin{prop}
These operations are well defined and furnish a $\K$ twisted composition structure on the chain and homology level.
\end{prop}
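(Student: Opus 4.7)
My plan is to reduce the proposition to the previous theorem by first establishing well-definedness on $S^1$-coinvariants and then transporting the $\K$-twisted composition structure from the chain level down to the coinvariant level.

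First I would check that $\cbullet{i}{j}$ and $\bullet_{ij}$ descend to the $(S^1)^{\times n}$-coinvariants. For a free flag $k \ne i$ on $\alpha$ (respectively any free flag on $\beta$ other than $j$), the action $\rho_k$ commutes with $\ccirc{i}{j}$ by the equivariance of the operadic gluings with respect to the $S^1$-factors not involved in the composition, so replacing $\alpha$ by $\rho_k(\phi)\alpha$ only permutes a free $S^1$-factor on the output. The nontrivial check is at the two glued flags. For the flag $i$ on $\alpha$, replacing $\alpha$ by $\rho_i(\phi)\alpha$ gives $\rho_i(S^1)\rho_i(\phi)\alpha\ccirc{i}{j}\beta$, and since $\rho_i(S^1)\rho_i(\phi) = \rho_i(S^1)$ as a reparameterization of the same $S^1$-family, the twist gluing is unchanged. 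For the flag $j$ on $\beta$, the balanced condition $\rho_i(\phi)(a)\ccirc{i}{j}b = a\ccirc{i}{j}\rho_j(-\phi)(b)$ lets us trade $\rho_j(\phi)$ on $\beta$ for $\rho_i(-\phi)$ on $\alpha$, which is then absorbed into $\rho_i(S^1)$ as above. The self-gluing case $\bullet_{ij}$ is analogous: both $\rho_i(\phi)$ and $\rho_j(\phi)$ can be absorbed into the single $S^1$-family prepended to $\alpha$, again using the balanced identity for $\circ_{ij}$.

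Next, I would verify that these descended operations furnish a composition structure in the sense required. The associativity relations of the underlying modular (or cyclic, PROPic, etc.) operad on chains are inherited by the twist-glued operations because on a graph with more than one edge, the ordering of the $S^1$-parameterizations is determined by the edge set up to sign, and the underlying compositions $\ccirc{i}{j}, \circ_{ij}$ commute in the required way. Equivariance with respect to the residual symmetric group actions on the coinvariants follows from the equivariance at the chain level together with the fact that the projection $[\;]$ intertwines these actions.

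Finally, to identify the structure as $\K$-twisted, I would invoke the preceding theorem: at the chain level the compositions $\cbullet{i}{j}$ and $\bullet_{ij}$ are given by inserting the fundamental class $[S^1]$ of degree $1$ into each edge of a graph in the relevant triple, so the composition along a graph $\Gamma$ carries a factor of $[S^1]^{\otimes |E(\Gamma)|}$, which is exactly a generator of $\K(\Gamma) = \mathrm{Det}(\mathrm{Edge}(\Gamma))$ after the shift. Since the well-definedness check preserves these degree shifts, the induced operations on $\O_{S^1}$ assemble into a $\K$-twisted composition structure of the same type (modular, cyclic, PROPic, etc.) as $\O$. The main obstacle I anticipate is bookkeeping for the self-gluing case and for graphs with multiple edges, where one must confirm that the antisymmetry forced by $\mathrm{Det}(\mathrm{Edge})$ matches the antisymmetry of the $S^1$-product of fundamental classes under edge transposition; this is the same Koszul-sign check that underlies the identity $\bullet_{ij}\bullet_{kl} = -\bullet_{kl}\bullet_{ij}$ in Lemma \ref{oddselflem} and the vanishing $\Delta^2 = 0$, and it follows from the graded commutativity of the Eilenberg--Zilber shuffle product on the fundamental classes of the $S^1$-factors attached to distinct edges.
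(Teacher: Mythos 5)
Your proposal is correct and follows essentially the same route as the paper: well-definedness comes from absorbing $\rho_i(\phi)$ into the $S^1$-family and using the balanced condition to trade $\rho_j(\phi)$ on the second factor for $\rho_i(-\phi)$ on the first, and the $\K$-twist is identified exactly as in the preceding theorem by the degree-one fundamental class $[S^1]$ decorating each edge. The paper compresses all of this into ``follows from the fact that the action is balanced'' and ``the second part is as above,'' so your write-up is simply a fuller account of the intended argument.
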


\begin{proof}
The fact that this is well defined follows from the fact that the action is balanced. The second part is as above.
\end{proof}

\begin{rmk}
The co--invariants of the Sullivan PROP  are also what gives
 rise to an $L_{\infty}$ structure \cite{CS}, which seems to be true in general.
\end{rmk}
\subsubsection{Scharz's modular and extended modular operads}
\label{emopar}
There are other early examples like the Schwarz--modular operads MO \cite{Schwarz} where there are only self--gluings and
a horizontal composition. In order to get an odd operation on the chain level Schwarz considers so called EMOs (extended modular operads),
these carry just as above an $S^1$ action which gives an $\SS_n\wr S^1$ action on each $\O((n))$.

\subsection{The paradigm: Real blow-ups and the Master equation}

\subsubsection{Closed version}
A particularly interesting type of situation occurs if one augments an operad with an $S^1$ action.
The prototype for this is the collection $\overline{M}_{g,n}^{KSV}$ of real blow ups of the Deligne--Mumford spaces along their
compactification divisors as defined in \cite{KSV}.

Here, before the blow--up, the spaces $\overline M_{g,n}$ form a modular operad --- even the archetypical one. The gluing of
two curves is given by identifying the marked points and producing a node.
One feature of the compactification is that the compactification divisor is composed of operadic
compositions. More precisely for each genus labeled graph $\Gamma$ of type $((g,n))$ there is a map $\overline {M}(\Gamma)\to \Mgn$
where $\overline {M}(\Gamma)=\times_{v\in V(\Gamma)}M_{(g(v),Flag(v))}$ and in
particular the one-edge trees define a normal crossing divisor.

Now after blowing up, the spaces $\Mksv$ do not form a modular operad anymore, since one has to specify a vector
over the new node. This is the origin of the twist gluing.
One could have also added tangent vectors at each marked point and the nodes. This would give a modular operad. The KSV--construction is then just the twist gluing on the co--invariants.

The master equation now plays the following role.
Let $S=\sum_{g,n}[\overline{M}_{g,n}^{KSV}/\Sn]$,
where one sums over fundamental classes in a suitable sense. One such framework
is given in \cite{HVZ} where geometric chains of Joyce \cite{joyce}
are used.

The boundary in this case is essentially
 the geometric boundary of the fundamental class
viewed as an orbifold with corners. Notice that while in the DM setting
the compactification was with a divisor i.e. of complex codimension one, after
blowing up in the KSV setting the compactification is done by a real codimension
one bordification. Thus $dS$ is the sum over these boundaries, which
are exactly given by the blow ups of the divisors and these
correspond exactly to the surfaces with one double point, either self glued
or non--self glued.
Working this out one finds that  $S$ satisfies the master equation.

\subsubsection{Open gluing case/orientation version}

Likewise there is a construction in the open/closed case in \cite{HVZ}.
Here the relevant moduli spaces are the real blow--ups
$\overline{\mathcal{M}}_{g,n}^{KSV\, b,\vec{m}}$ of the moduli space
$\overline{\mathcal{M}}_{g,n}^{b,\vec{m}}$ introduced in \cite{Liu}.
These are the moduli spaces of genus $g$ curves with $n$ marked labeled points,
$b$ boundary components and $\vec{m}$ marked labeled points on the boundary.
 In the closed case the blow up inherits an orientation because before compactifying the moduli space has a natural complex structure. In the open/closed case one can define iteratively
 the orientation by lifting or pushing the natural orientation of $M^{HVZ b,(1,...,1)}_{g,n}$ (see \cite{IS}) along fiber bundles that at the end reach any open/closed moduli space.

Whereas the degree $1$ in the closed case came from the fundamental class,
here the grading comes from a grading by codimension
in the corresponding moduli space. This agrees with the geometric dimension
concept in the closed case.

For instance, if a geometric chain has degree $d$ and it is
constructed from $\overline{\mathcal{M}}_{g,n}^{KSV}$, the real blow-up of the
DM-compactification of the moduli space as in \cite{HVZ}, we assign it
a new degree: $6g-6+2n-d$. In this new grading we also obtain a degree
one map. Indeed, if we have two chains of degrees $d_1$ and $d_2$
constructed from $\overline{\mathcal{M}}_{g_1,n_1}^{KSV}$ and $\overline{\mathcal{M}}_{g_2,n_2}^{KSV}$ respectively, their corresponding
 codimensions are
$6g_1 - 6 -2n_1 -d_1$ and $6g_2 - 6 -2n_2 -d_2$. After
 twist gluing we
 obtain a chain of degree $d_1 + d_2 + 1$ which
lives in $\overline{\mathcal{M}}_{g_1+g_2,n_1+n_2-2}^{KSV}$
 and therefore has codimension
\[ 6g_1 +6g_2 - 6 + 2n_1 + 2n_2 -4 -d_1-d_2-1 = 6g_1 + 6g_2 -2n_1 -2n_2 -d_1-d_2 -11. \]
 However, the sum of the original codimensions is
 $6g_1 + 6g_2 +2n_1 +2n_2-d_1-d_2 -12$ which shows that the
 change in degrees is exactly 1. In the self-twist
gluing picture something similar happens and the
 change in degree is 1 as well.

This grading by codimension may seem odd but it is
 exactly what we need in the open case. Recall that
the twist gluing appeared in the closed case because of
 the different choices one has to attach surfaces along
labeled points in the interior of the surface (different angles).
 This is not the case for labeled points in the boundary.

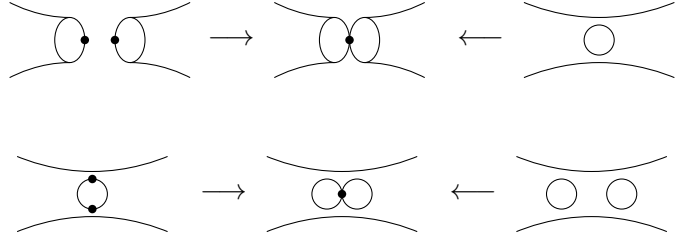
\begin{figure}
\begin{center}

\begin{tikzpicture}[scale=0.2]

\draw (-2,1.5) parabola (-6,2.5);
\draw (-2,-1.5) parabola (-6,-2.5);
\draw (2,1.5) parabola (6,2.5);
\draw (2,-1.5) parabola (6,-2.5);

\draw (-2,0) ellipse (1 and 1.5);
\draw (2,0) ellipse (1 and 1.5);

\fill (-1,0) circle (8pt);
\fill (1,0) circle (8pt);

\node[right] at (6.5,0) {$\longrightarrow$};

\end{tikzpicture}
\begin{tikzpicture}[scale=0.2]

\draw (-1,1.5) parabola (-5,2.5);
\draw (-1,-1.5) parabola (-5,-2.5);
\draw (1,1.5) parabola (5,2.5);
\draw (1,-1.5) parabola (5,-2.5);

\draw (-1,0) ellipse (1 and 1.5);
\draw (1,0) ellipse (1 and 1.5);

\fill (0,0) circle (8pt);

\node[right] at (6.5,0) {$\longleftarrow$};

\end{tikzpicture}
\begin{tikzpicture}[scale=0.2]

\draw (-5,2.5) parabola bend (0,1.5) (5,2.5);
\draw (-5,-2.5) parabola bend (0,-1.5) (5,-2.5);

\draw (0,0) circle (1cm);

\end{tikzpicture}
\par
\vspace{1cm}

\begin{tikzpicture}[scale=0.2]

\draw (-5,2.5) parabola bend (0,1.5) (5,2.5);
\draw (-5,-2.5) parabola bend (0,-1.5) (5,-2.5);

\draw (0,0) circle (1cm);

\fill (0,1) circle (8pt);
\fill (0,-1) circle (8pt);

\node[right] at (6.5,0) {$\longrightarrow$};

\end{tikzpicture}
\begin{tikzpicture}[scale=0.2]

\draw (-5,2.5) parabola bend (0,1.5) (5,2.5);
\draw (-5,-2.5) parabola bend (0,-1.5) (5,-2.5);

\draw (-1,0) circle (1cm);
\draw (1,0) circle (1cm);

\fill (0,0) circle (8pt);

\node[right] at (6.5,0) {$\longleftarrow$};

\end{tikzpicture}
\begin{tikzpicture}[scale=0.2]

\draw (-5,2.5) parabola bend (0,1.5) (5,2.5);
\draw (-5,-2.5) parabola bend (0,-1.5) (5,-2.5);

\draw (-2,0) circle (1cm);
\draw (2,0) circle (1cm);

\end{tikzpicture}

\end{center}
\caption{\label{openfig} Boundary degenerations for the open case.}
\end{figure}

If we consider surfaces with at least one marked point in all boundary components we have
 essentially two cases for the boundary degeneration shown in
Figure \ref{openfig}. In the first one we have two labeled points
in different boundary components and in the second we have two labeled
 points in the same boundary component. The surface on the center is
the result of attaching the labeled points represented on the left.
The surface on the right is the desingularized version of the one in
 the center. Since there are no ambiguities in how to attach the
labeled points this operation induces a degree zero map. However,
 grading by codimension is a completely different story. In the
 first case we have two chains of dimensions $d_1$ and $d_2$
 respectively. Recall that the dimension of the moduli space
 $\overline{\mathcal{M}}_{g,n}^{HVZ\, b,\vec{m}}$ is $6g - 6 + 2n + 3b + m$ where $b$ corresponds to the number of boundary
components and $m$ is the number of labeled points in this boundary as
 in \cite{HVZ}. The codimensions are
then $6g_1 - 6 + 2n_1 + 3b_1 + m_1 - d_1$
and $6g_2 - 6 + 2n_2 + 3b_2 + m_2 - d_2$ respectively and
their sum is \[ 6g_1 + 6g_2 +2n_1 + 2n_2 +3b_1 + 3b_2 + m_1 + m_2 - d_1 - d_2 -12. \] After attaching, the new chain
lives in $\overline{\mathcal{M}}_{g_1 + g_2,n_1 + n_2}^{HVZ\, b_1+b_2-1,\vec{m}'}$,
where $\vec{m}'$ has $m_1 + m_2 - 2$ components, and therefore its
codimension
is \[ 6g_1 + 6g_2 - 6 + 2n_1 + 2n_2 +3b_1 + 3b_2 -3 +m_1 + m_2 -2 -d_1 - d_2 \]
which is equal to
\[ 6g_1 + 6g_2 +2n_1 + 2n_2 +3b_1 + 3b_2 + m_1 + m_2 - d_1 - d_2 -11 \]
 and therefore we get a degree one map again.
Similar calculations take care of the self attaching operation and the
second case.

Geometrically, the grading reflects the chosen orientations. And it
is this choice of orientation \cite{HVZ} that makes the gluing
odd.

Intuitively, in the closed case there is an extra vector
being added in the tangent bundle due to the circle. But there is also
another vector being added in the normal bundle. In the open case
there is an additional vector being added only in the normal bundle
so grading by codimension gives us an odd gluing.

\subsubsection{Open/closed interaction; adding a derivation.}
 This idea is also the guide if we consider surfaces without marked points in some of their boundary components. In this case there is a new phenomenon that occurs in the boundary.
Namely,
 as a boundary component degenerates it actually turns into something that looks like a marked point (a puncture in fact). Therefore it is essential to consider a new operation that simply re-labels a marked point as a degenerate boundary component in order to balance the quantum master equation.

 If we make the same computation we did before for chains using codimension we also encounter a degree one map. However it is very clear in this case that we are not really changing the chain, we are just placing it in a different moduli space and hence changing the codimension. This is an interesting interaction between the closed and open operations and it is like twist gluing a surface at an interior (closed) marked point with a disc with only one interior marked point at such point giving a sort of degenerate boundary.

This open/closed interaction given by this degeneration leads to a contribution $\Delta_{oc}$
which is not only a derivation, but also a derivation of degree $1$.
Of course adding a degree $1$ derivation to a BV operator which anti-commutes with it
results in a new BV operator.

\subsubsection{Master Equations and Compactification}
In the above cases, we see that the fact that $S$ which is composed
out of fundamental classes,  satisfies the QME is equivalent to the fact
that the boundary divisors are either given by twist gluing two curves
$\cbullet{i}{j}$ or self--gluing the curves $\bullet_{ij}$ or the open gluing.

\begin{qu}
What is the meaning of the ME or QME in the context of the $\Arc$ operad,
the framed little discs and the Sullivan PROP?

There are two things which have to be solved (1) what kind of chains
(2) what is the correct notion of fundamental chains.

For $\Arc$ there is a partial compactification, while the Sullivan PROP
retracts to a CW complex, so one can use cellular chains. A clue might be provided by
the Stasheff polytopes and the $A_{\infty}$ Deligne conjecture \cite{KSchw},
see below \S\ref{Ainfpar}.
\end{qu}

It seems that a fundamental role for the $\Arc$ or Sullivan PROP is
played exactly by the arc families whose arcs do not quasi--fill the surface.
Recall that an arc family quasi--fills the surface if its complement
are finitely many polygons which contain at most one puncture, see \cite{hoch1,hoch2}.

\subsection{Other examples: $A_{\infty}$ and $A_{\infty}$ Deligne}
\label{Ainfpar}
The Stasheff polytopes are also a geometric incarnation of the master equation.
This follows e.g.\ from \cite{HVZ}, where discs with boundary points are used.
But even classically the boundary of an associahedron,  is precisely
given by all possible compositions of lower order associahedra.
This is precisely the compactification
 one would get for the non--sigma bracket
and the corresponding master equation.

 The link
to the algebraic world is then to take a chain model where the
usual power series of fundamental classes rel boundary gives
a solution to the ME.

This is taken a step further in \cite{KSchw} where a product of cyclohedra and
associahedra was given as the topological operad  lying above
the minimal operad of \cite{KS} which in our framework is a Feynman
transform of the Poisson operad $Assoc\circ Lie$.

\subsection{Topological Feynman transform?}
One question that remains is what is the general theory of a topological
Feynman transform.

For the closed type the set could be:

\begin{equation}F\O((g,S))=\bigsqcup_{\colim(\G(g,S)\downarrow \crl_{g,S})}
\bigsqcup_{v\in V_{\G}}\O(\crl_v)\bigsqcup_{e\in E_{\G}}S^1
\end{equation}

This could be considered as a real blow up of the DM compactification.
However, it is the way that this set is topologized which is not clear.

Furthermore there are the open examples, where the $S^1$ factors disappear
in favor of more structure at the vertices.
In all one could make the following tentative definition.

\begin{df}
A topological Feynman transform of a modular operad $\O$ is
a collection of spaces $\overline\O((g,n))$ with $\O((g,n))\subset\bar \O((g,n))$
such that there are fundamental
classes coming from the relative fundamental classes which
satisfy the quantum master equation.
\end{df}
Examples are then the moduli spaces above and the associahedra
as well as the topological model for the minimal operad of Kontsevich and Soibelman \cite{KS}.

This  is essentially equivalent to the cut--off view of Sullivan \cite{S1,S2,S3}.
Here the cut off is given by removing a tubular neighborhood of the compactification divisor which
amounts to a real blow--up of that divisor.

\begin{rmk}
Notice that in more involved cases, like the open/closed version, there might be
several terms in the master equation. Basically there is one term for each type of elementary operation. Closed self--, closed non--self--, open self--, open non--self--gluing and open/closed degeneration. This theme is explained
in \cite{feynman} where we define a Feynman transform relative to a set of generating morphisms.
\end{rmk}
\begin{rmk}
Considering the master equations from the chain level, the master equation here could be interpreted as
giving a morphism to  the trivial modular operad. This of course can be viewed as pushing forward to a point,
which is what integration is.
\end{rmk}

\section{Summary and Discussion} Let us conclude by putting our results in the context of the existing literature. First we recount our mantra along with the mathematical statements from the text which realize them:

\begin{enumerate}
\item Odd non-self-gluings give rise to odd Lie brackets: Corollary $\ref{r1}$, Theorem $\ref{r2}$, Theorem $\ref{r3}$.
\item Odd self-gluings give rise to differential operators:  Theorem $\ref{r4}$, Theorem $\ref{r5}$.
\item The horizontal multiplication   turns the odd brackets into odd
Poisson or Gerstenhaber brackets and makes the differentials BV operators: Theorem $\ref{r6}$, Theorem $\ref{r7}$, Theorem $\ref{ncthm}$.
\item Algebraically, the master equation classifies dg-algebras over the relevant dual or Feynman transform: Theorem $\ref{r8}$.
\item Topologically, the master equation drives the compactification:  Section $\ref{toppar}$.
\end{enumerate}

\subsection*{Mantra 1 and 2}  The definition of the bracket and the signed bracket for operads are classical and go back to \cite{Gerst,KM}.  The bracket in the general odd cyclic case is new. Several examples of such Lie structures have appeared in various guises in
\cite{Kthree, CV,BLB,Ginz,Schedler,Ba,MenLie}.
Many of these examples are given by anti-cyclic structures, that
 arise from tensoring particular cyclic operads with a particular
anti--cyclic operad given  by the endomorphism operad of symplectic
vector spaces \cite{Kthree,CV,Ginz}.  These are now all corollaries.

We also clarified that the bracket lifts to the cyclic coinvariants and
give the explicit relation to the non--cyclic bracket.

(Wheeled) PROP(erad)s  have been
extensively considered in \cite{KM,Val,MV,Merkulov,MMS}.
There are several differences to the theory of \cite{Val, MV}
though. The most important is that we
 only use only {\em single edge gluings} and
we {\em do not} include the horizontal composition for both the brackets and differentials, as well as for the dual transform.
 In particular, this bracket includes
only the dioperadic gluings. This means that the results  do not directly transfer, but have to be
adjusted and newly justified. For instance it is not {\it a priori} clear that the Lie--admissible for Properads structure restricts \cite{MV,Fiore}. The associative structure for PROPs \cite{KM} e.g. restricts to only Lie--admissible.
Furthermore, one has to watch out for different sign conventions in these cases. The $\K$--twisted version is however different from the vertex suspension of \cite{MV,Val}.
As a corollary of the general statement we can recover the operator $\Delta$ which was found in \cite{Merkulov} in the co--free case of the co--bar transform.

For $\K$--twisted modular operads the statement is new in this generality. A corollary is that this applies to the Feynman transform of a modular operad. This example was found in \cite{Ba}.

One main point we establish is that it suffices to have a $\K$ twisted structure. It is not necessary to be quasi--free, a convolution product, a tensor product involving symplectic $\Endo$--operads or any of the other special examples.

\subsection*{Mantra 3}
This theorem in its generality on the algebraic level is new. Topologically this goes back to \cite{Zwie}.
The nc-versions of operads are new, except the model given by the PROP generated by an operad. New as well are
the nc-generalization for cyclic and modular operads. For (wheeled) Properads of course the nc-version are  by definition
wheeled PROPs.  Here  our new point of view the horizontal composition is
not on equal footing and should not be included in the bracket, but rather gives a new multiplication.
Again for this one needs to consider the $\K$ twist and not the one by vertex suspension.
The fact that $\Delta$ becomes BV and the signed bracket
odd Poisson is then new. In \cite{KM} for instance it was part of an associative algebra structure.

The nc-version for modular operads, see \S\ref{ncmodularsec}, is new. It is related to the MOs of \cite{Schwarz} via taking coinvariants.

\subsection*{Mantra 4} For operads  this is classical. For properads it can alternatively be proven from by using Theorem 4.1.2. in \cite{Merkulov}
where now the quasi-free object is the dual transform involving {\em only one-edge  gluings.}  Again one has to use spurious shifts.
For $\K$-modular operads this result is contained in \cite{Ba}.
For $\K$-twisted cyclic operads it then follows by restriction, although this has not appeared in the literature.
The extension to the nc-cases and the identification of the terms as Gerstenhaber brackets and BV operators in general are new.

As our results show, Hom spaces between structures that differ by a $\fr{K}$-twist provide a source of examples.  An example pertaining to
Feynman transforms appears in \cite{Ba}. The convolution operads and properads, \cite{MMS,MV}
are also examples, because of the possibility to shift in the directed cases.
In \cite{MMS} another special case  of $\Delta$ is given which classifies Master functions from geometry, see Theorem 3.4.3 of \cite{MMS}. Here
the authors in our language consider a particular $\K$ twisted wheeled properad which is the dual transform (in our sense) of a special wheeled properad.

\subsection*{Mantra 5}
A similar construction to our balanced $S^1$ actions appeared in \cite{Schwarz} and we thank A.\ Schwarz for pointing this out to us.

Various connections between compactifications and master equations have been studied in \cite{Zwie,KSV,HVZ,Costello, Kate}.
The application to the $\Arc$ operad and hence string topology are new.
Looking more carefully how an acceptable action $S$ can be built out
of fundamental classes, one can
say that by reverse engineering:
\begin{quote}
{\it Topologically the Master equation drives the compactification.}
\end{quote}

These considerations and our treatment of signs
might be helpful for further endeavors in string topology, see e.g.\ \cite{Kate}.

\subsection*{Outlook}

In  \cite{feynman}, we  give a general, abstract, categorical setup where all of the above types
of ``operad--like'' structures are on equal footing as functors
from so--called Feynman categories. The selected ones are examples of Feynman categories
 of Feynman graphs.
The odd versions of the structures
are then obtained by using a universal twist called $\K$,
which makes edges odd, viz.\ have degree $1$.
Making the abstract concepts concrete  in the examples
most relevant for the ``practicing mathematician or physicist'',
one is led back to the concrete constructions and calculations we present here.
And, in fact, the theory of Feynman categories
was motivated by the calculations
of this paper.  There we also consider the dual transforms in full generality and prove that they are relatively co-fibrant
after establishing the correct model category framework.

\section*{Acknowledgments}
We would like to thank B.\ Browder,
  M.~Kontsevich, Yu.~I.~Manin, J.~McClure,
A.~Schwarz,  J.~Stasheff, D.~Sullivan,  and S.~Voronov
for enlightening discussions. We also
benefited from the interest and  discussions with  C.~Berger, D.\ Borisov,
B.\ Fresse, S.\ Merkulov and B.\ Vallette.

RK thankfully acknowledges support from NSF DMS-0805881 and the
 Humboldt Foundation.
He also thanks the
Institut des Hautes Etudes Scientifiques, the Max--Planck--Institute
for Mathematics in Bonn and the Institute for Advanced Study
for their support and the University of Hamburg for its hospitality.

This project was worked on substantially while RK was visiting the IAS.
While at the IAS RKs work was supported by the NSF under agreement
DMS--0635607.

BW would like to thank the Purdue Research Foundation for its support.

 Any opinions, findings and conclusions or
recommendations expressed in this
 material are those of the authors and do not necessarily
reflect the views of the National Science foundation.


\appendix
\section{Graphs and Algebras}
\subsection{The category of abstract graphs}
An abstract graph $\Gamma$ is a quadruple $(V_{\Gamma},F_{\Gamma},$
$\imath_{\Gamma},\del_{\Gamma})$
of a finite set of vertices $V_{\Gamma}$ a finite
set of half edges or flags $F_{\Gamma}$
and involution on flags $\imath_{\Gamma}:F_{\Gamma}\to F_{\Gamma}; \imath_{\Gamma}^2=id$
 and
a map $\del_{\Gamma}:F_{\Gamma}\to V_{\Gamma}$.
We will omit the subscripts $\Gamma$ if no confusion arises.

Since the map $\imath$ is an involution, it has orbits of order one or two.
We will call the flags in an orbit of order one {\em tails}.
We will call an orbit of order two an {\em edge}. The flags of
an edge are its elements.  The set of vertices and edges form a 1--dim simplicial complex.
The realization of a graph is the realization of this simplicial complex.

\begin{ex}
A graph with one vertex is called a corolla.
Such a graph only has tails and no edges.
Any set $S$ gives rise to a corolla. Let $p$ be a one point set
then the corolla is $\crl_{p,S}=(p,S, id,\del)$ where $\del$ is the constant map.
\end{ex}

Given a vertex $v$ of $\Gamma$ we set
$F_v=F_v(\Gamma)=\del^{-1}(v)$ and call it {\em the
flags incident to $v$}. This set naturally gives rise to a corolla.
The {\em tails} at $v$ is the subset of tails of $F_v$.  As remarked above $F_{v}$ defines a corolla $\crl_{v}=\crl_{\{v\},F_v}$.

\begin{rmk}
The way things are set up, we are talking about finite sets, so
changing the sets even by bijections changes the graphs.
\end{rmk}

An $S$ labeling of a graph is a map from its tails to $S$.

An orientation for a graph $\Gamma$ is a map $F_{\Gamma}\to \{in,out\}$
such that the two flags of each edge are mapped to different values.
This allows one to speak about the ``in'' and the ``out''
edges, flags or tails at a vertex.

\begin{ex}
A tree is a contractible graph. It is rooted if it has a distinguished
vertex, called the root. A tree has an induced orientation with the out edges
being the ones pointing toward the root.
\end{ex}

As usual there are edge paths on a graph and the natural notion
of an oriented edge path. An edge path is a (oriented) cycle if it starts and
stops at the same vertex and all the edges are pairwise distinct.
An oriented cycle with pairwise distinct vertices is sometimes called a wheel.
A cycle of length one is a loop.

A {\em na\"ive morphism} of graphs $\psi:\Gamma\to \Gamma'$
is given by a pair of maps  $(\psi_F:F_{\Gamma}\to F_{\Gamma'},\psi_V:V_{\Gamma}\to V_{\Gamma'})$
compatible with the maps $i$ and $\del$ in the obvious fashion.
This notion is good to define subgraphs and automorphism.

It turns out that this data is not enough to capture all the needed aspects
for composing along graphs.
For instance it is not possible to contract edges with such a map or graft
two flags into one edge. The basic operations of composition in an operad
viewed in graphs is however exactly grafting two flags and then contracting.
There is a more sophisticated version of maps given in \cite{BM} which we will use in the sequel \cite{feynman}. For now we
wish to add the following morphisms.

{\em Grafting.} Given two graphs $\Gamma$ and $\Gamma'$, a tail $s$ of $\Gamma$
and a tail $t$ of $\Gamma'$ then $\Gamma\scirct \Gamma'$ is the graph
with the same vertices, flags, $\del$, but where $\imath(s)=t$, and the rest
of $i$ is unchanged.

The {\em contraction of an edge $e$} of $\Gamma$ is the graph where
the two flags of $e$ are omitted from the set of flags and the vertices
of $e$ are identified. It is denoted by $\Gamma/e$.

{\em Merger.} Given two graphs $\Gamma$ and $\Gamma'$ merging
the vertex $v$ of $\Gamma$ with the vertex $v'$ of $\Gamma'$ means
that these two vertices are identified and the rest of the structures
just descend.

\begin{rmk}
One thing that is not so obvious is how $S$-labeling behaves
under these operations. If $S$ are arbitrary sets (the unbiased case)
this is clear. If one uses enumerations however (the biased case), one must specify how to re--enumerate. This is usually built into the definition of the
composition type gadget.
\end{rmk}

\subsection{Standard algebras}
For the readers' convenience,
we list the definitions of the algebras we talk about. Let $A$ be a graded
vector space over $k$ and let $|a|$ be the degree of an element $a$.
Let's fix char $k=0$ or at least $\neq 2$.

\begin{enumerate}
\item Pre--Lie algebra\index{algebra!pre-Lie}. $(A,\circ:A\times A \to A)$ s.t.
$$
a\circ (b\circ c)- (a\circ b)\circ c= (-1)^{|c||b|} [a\circ (c\circ b)-
(a\circ c)\circ b]
$$

\item Odd Lie. $(A,\{ \,\bullet \, \}:A\otimes A\to A)$ s.t.
\begin{enumerate}
\item odd anti-symmetry: $ \ \ \{a \bullet b\}= - (-1)^{(|a|-1)(|b|-1)}\{b \bullet a\}$
\item odd Jacobi:
$$0=\{a\bullet \{b\bullet c\}\}+(-1)^{(|c|-1)((|a|-1)+(|b|-1))}\{c\bullet \{a\bullet b\}\}
+(-1)^{(|a|-1)((|b|-1)+(|c|-1))}\{b\bullet \{c\bullet a\}\}$$
\end{enumerate}

\item Odd Poisson or Gerstenhaber\index{algebra!Gerstenhaber}. $(A,\{ \,\bullet \, \},\cdot)$
is odd Lie plus another associative multiplication
for which the bracket is a derivation with the appropriate signs. (Gerstenhaber is often also defined to be super-commutative.)
\begin{equation}
\label{dereq}
\{ a\bullet  bc\} = \{ a \bullet  b\} \ast c +(-1)^{(|a|-1)|b|} b\ast \{ a\bullet c\} \ \forall \ a,b,c   \in  A
\end{equation}

\item (dg)BV\index{algebra!BV}. $(A,\cdot, \Delta)$. $(A,\cdot)$
associative (differential graded) supercommutative algebra, $\Delta$ a differential of degree $1$:
$\Delta^2=0$ and
\begin{equation}\label{poissoneq}
 \{a\bullet b\}:=(-1)^{|a|}\Delta(ab)-a\Delta(b)-(-1)^{|a|}\Delta(a)b
\end{equation}
 is a Gerstenhaber bracket.

An equivalent condition for a  BV operator is
\begin{eqnarray*}
\Delta(abc)&=&\Delta(abc) \Delta(ab)c+(-1)^{|a|}a\Delta(bc)+(-1)^{(|a|-1)|b|}b
\Delta(ac)-\Delta(a)bc \nn\\ && -(-1)^{|a|}a\Delta(b)c-(-1)^{|a|+|b|}ab\Delta(c)
\end{eqnarray*}

\item (dg)GBV. This name is used if {\it a priori} there is a BV operator and a given Gerstenhaber
bracket and {\it a posteriori} the given Gerstenhaber bracket coincides with the one induced by the BV operator.
\end{enumerate}


\end{document}